\definecolor{1}{rgb}{1,0.2,0.3}
\definecolor{2}{rgb}{0.1,0.3,0.5}
\definecolor{3}{rgb}{1,1,0}
\definecolor{4}{rgb}{255,255,255}
    \definecolor{sangria}{rgb}{0.57, 0.0, 0.04}
	\definecolor{darkblue}{RGB}{11, 11, 69}
	\definecolor{forestgreen}{RGB}{74,129,34}
	\definecolor{strongpink}{RGB}{210,29,129}
	\definecolor{pan-red}{RGB}{254,33,139}
	\definecolor{pan-yellow}{RGB}{210,178,0}
	\definecolor{pan-blue}{RGB}{33,176,254}
    \definecolor{gq-green}{RGB}{74,129,34}
    \definecolor{gq-mauve}{RGB}{181,126,220}
    \definecolor{tr-pink}{RGB}{246,170,183}
    \definecolor{tr-blue}{RGB}{85, 205, 253}
    \definecolor{b-violet}{RGB}{215,0,113}
    \definecolor{b-blue}{RGB}{0,53,170}
    \definecolor{b-mauve}{RGB}{156,78,151}
    \definecolor{le-red}{RGB}{212, 44, 0}
    \definecolor{le-orange}{RGB}{253, 152, 85}
    \definecolor{le-pink}{RGB}{209, 97, 162}
    \definecolor{le-purple}{RGB}{162, 1, 97}
    \definecolor{g-green}{RGB}{7, 141, 112}
    \definecolor{g-blue}{RGB}{80, 73, 204}
\newtheorem{theorem}{Theorem}
\newtheorem{corollary}[theorem]{Corollary}
\newtheorem{lemma}[theorem]{Lemma}
\newtheorem{conjecture}[theorem]{Conjecture}
\newtheorem{proposition}[theorem]{Proposition}
\newtheorem{tool}[theorem]{Tool}
\newcommand{\psat}{\ensuremath{\mathrm{P3SAT}_{3}}}
\theoremstyle{definition}
\newtheorem*{definition}{Definition}
\theoremstyle{remark}
\newtheorem*{remark}{Remark}
\newcommand{\true}{\texttt{true}{}} 
\newcommand{\false}{\texttt{false}{}} 
\newcommand{\elemcube}[4][brown]{
	\draw [fill=#1!30,thin] (#2+1,#3,#4) -- ++(0,1,0) -- ++(0,0,-1) -- ++(0, -1, 0) -- cycle;
	\draw [fill=#1!40, thin] (#2,#3+1,#4) -- ++(1,0,0) -- ++(0,0,-1) -- ++(-1, 0, 0) -- cycle;
	\draw [fill=#1!10, thin] (#2,#3,#4)   -- ++(1,0,0) -- ++(0,1,0)  -- ++(-1, 0, 0) -- cycle;
}
\newcommand{\glasselemcube}[5][brown]{
	\draw [fill=#1!30,thin, opacity=#5] (#2+1,#3,#4) -- ++(0,1,0) -- ++(0,0,-1) -- ++(0, -1, 0) -- cycle;
	\draw [fill=#1!40, thin, opacity =#5] (#2,#3+1,#4) -- ++(1,0,0) -- ++(0,0,-1) -- ++(-1, 0, 0) -- cycle;
	\draw [fill=#1!10, thin, opacity = #5] (#2,#3,#4)   -- ++(1,0,0) -- ++(0,1,0)  -- ++(-1, 0, 0) -- cycle;
}
\newcommand{\colourcubeZface}[4][brown]
{\draw [fill=#1!30, thin] (#2,#3,#4)   -- ++(1,0,0) -- ++(0,1,0)  -- ++(-1, 0, 0) -- cycle;     }
\newcommand{\colourcubeXface}[4][brown]{
\draw [fill=#1!40, thin] (#2,#3+1,#4) -- ++(1,0,0) -- ++(0,0,-1) -- ++(-1, 0, 0) -- cycle;
}
\newcommand{\cubelabel}[4]{
\draw (#1+.5,#2+.5,#3) node {#4};
}
\DeclareRobustCommand
\begin{document}

\tikzset
{
  x=.23in,
  y=.23in,
}

\title{Complexity of chess domination problems}
\author[A Langlois-Rémillard]{Alexis Langlois-R\'emillard}
\address[
A. Langlois-R\'emillard]{Department of applied mathematics, computer science and statistics, Faculty of Sciences, Ghent University, Ghent, Belgium\\ and
ScaDS.AI Leipzig, Universität Leipzig, Leipzig, Germany\\
now at Hausdorff Center for Mathematics, Bonn, Germany}
\email{alexis.langlois-remillard@tutanota.com}
\author[M Müßig]{Mia M\"u\ss{}ig}
\address[M. Müßig ]{Ludwig Maximilian Universit\"at M\"unchen, Germany and
ScaDS.AI Leipzig, Universität Leipzig, Leipzig, Germany}
\email{nienna@miamuessig.de}
\author[\'E Rold\'an]{\'Erika Rold\'an*}
\address[*\'E. Rold\'an]{ Max Planck Institute for Mathematics in the Sciences, Inselstraße 22, and
ScaDS.AI Leipzig, Universität Leipzig, Leipzig, Germany}
\email{roldan@mis.mpg.de}

\maketitle

\begin{abstract}
We study different domination problems of attacking and non-attacking rooks and queens on polyominoes and polycubes of all dimensions. Our main result proves that maximum independent domination is NP-complete for non-attacking queens and for non-attacking rooks on polycubes of dimension three and higher. We also analyze these problems for polyominoes and convex polyominoes, conjecture the complexity classes, and provide a computer tool for investigation. We have also computed new values for classical queen domination problems on chessboards (square polyominoes). For our computations, we have translated the problem into an integer linear programming instance.  Finally, using this computational implementation and the game engine Godot, we have developed a video game of minimum domination of queens and rooks on randomly generated polyominoes.

\keywords{\textsc{Keywords:} Art Gallery Theorem, NP-Completion, NP-Hardness, Polyomino, Computational Geometry, Visibility Coverage, Guard Number, Domination Problem, N-Queens Problem, Linear Programming }
\subjclass{MSC 2020:}{
03D15, 
05B40, 
05B50, 
00A08, 
68Q17, 
68R05, 
68R07 
}
\end{abstract}

\section{Introduction}\label{sec:introduction}
One of the most ancient and famous enumeration problems involving a chessboard and chess pieces is the 8-queens problem that was first stated by Max Bezzel in 1848---see~\cite{bell2009survey,campbell_gauss_1977,LR_accromath_22} for accounts of its history. This problem asks to find the number of different ways of placing 8 queens on a chessboard so that none of them can attack the others (queens can attack vertically, horizontally, and diagonally). Note that $8$ is the maximum number of non-attacking queens that can be placed on an $8\times 8$ chessboard. Many problems have stemmed from this puzzle, some of them are: the $n$--queens problem~\cite{ahrens1918mathematische}---that is, the 8-queens problem generalized to $n\times n$ chessboards for $n \geq 1$; the completion problem~\cite{gent2017complexity,glock_n-queens_2022,nauck1850}, which asks if it is possible to add queens to a given set of non-attacking queens on a $n\times n$ chessboard to complete a non-attacking set with $n$ queens; and the minimum domination problem~\cite[Appendix]{dejaenisch1863}, which consists in finding the minimum number of queens necessary to guard or dominate a chessboard---the known values of this sequence can be found in \cite[\href{https://oeis.org/A075324}{A075324}]{oeis}.

  In this paper, we study domination problems on polyominoes and polycubes\footnote{In 1954, Solomon W. Golomb defined a polyomino as a \emph{finite rook-connected subset of squares of the infinite checkerboard}~\cite{golomb1954checker}. A $d$--polycube is its extension to dimension $d$.} by rooks and queens. This problem is also known as the art gallery problem on polyominoes~\cite{alpert2021art,Biedl12,Iwamoto14,ORourke87}.
 In recent work, the NP-hardness of the minimum domination by rooks and queens was proven for $d$--polycubes for $d\geq 2$~\cite{alpert2021art}---see Table \ref{table:mindom-queens}. In the same paper, the authors studied the maximum non-attacking rooks set problem on polyominoes, and they proved that it is in P.

 We note that the queen problem complexity has been considered on walled chessboards~\cite{barnaby2007} that is equivalent to the problem on path-connected polyominoes---see Theorem~\ref{thm:pathpolyo}.

Puzzles on chessboards have served as testing grounds for computational methods in the last few decades, starting, of course, with the $8$-queens puzzle programming challenge via backtracking popularised by D\ij{}kstra~\cite{Dijkstra71}. Specifically domination problems, like those we are interested in, have attracted a lot of attention since the foundational paper by Cockayne and Hedetniemi~\cite{cockayne1977towards}. Using chessboard problems can help get further results; we refer to the recent review~\cite{HH21} for the domination side and to the recent advance~\cite{BK21} on the $n$-queens problem for specifically this problem. It is also perfectly suited for computational exploration, and it continues to be used a benchmark~\cite{bird_2017,Fischetti18,Kunt23}. In this work, in addition to studying the computational complexity of these domination problems, we also contribute a  general ILP formulation that, combined with state-of-the-art solvers, enabled us to calculate previously unknown minimum numbers of non-attacking queens guarding the $n\times n$ chessboards~\cite[\href{https://oeis.org/A075324}{A075324}]{oeis}  up to $n=31$, from $n=25$~\cite{bird_2017}; see also Table~\ref{tab:known-values} for other new values.

 Our first main results prove the NP-completeness of the minimum domination of attacking and non-attacking rooks and queens on polycubes, extending~\cite[Thms~3,4]{alpert2021art}. To give some perspective, let us note that the minimum domination problem of rooks on a square chessboard is trivially polynomial: a rook is needed in each row and each column, so filling the diagonal solves it for any chessboard. The minimum domination problem for queens on the chessboard has been studied for the last 150 years, yet we still do not know whether there exists a polynomial-time algorithm to find the minimum number of queens needed to dominate a chessboard.

We also study the problem of finding maximum independent sets of queens or rooks on polyominoes; that is, the maximum number of non-attacking rooks or queens that can be placed on a polyomino. In one of our main results, Theorem~\ref{thm:nphard-rooks} (which
answers Question~3 in~\cite{alpert2021art}), we prove that the maximum independent rook domination problem on $d$-polycubes is NP-complete for $d\geq 3$. We also answer the same question for queens in Theorem~\ref{thm:nphard-queens}, proving that the problem is NP-complete for $d\geq 3$.

To put our results in context, in tables~\ref{table:maxdom},~\ref{table:mindom-rooks}, and~\ref{table:mindom-queens} we collect what is known and what is conjectured for the problems of minimum (independent and non-independent) and maximum (independent) domination for rooks and queens. We hope that the information in these tables will help researchers in the field to avoid imprecision on statements about the complexity of these problems and that it will also inspire further research. In what follows, we give precise definitions and statements of our main results.

\begin{table}
    \centering
\begin{tabular} {  c|p{5cm}|p{5cm}}
 \hline
 \hline
 \multicolumn{3}{|c|}{Max Independent Domination Problems} \\
 \hline
Boards & Rooks (non-attacking) & Queens (non-attacking)\\
\hline
\hline
 square completion & P (trivial)  & NP-complete~\cite{gent2017complexity} \\
 \hline
 all polyominoes  &   P~\cite[Thm~12]{alpert2021art}, \newline completion P~\cite[Thm~13]{alpert2021art}  & NP-complete? (Conjecture~\ref{conj:nphard-queens-2d}) \\
 \hline
 $d$--polycubes $d\geq 3$ & NP-complete (Thm~\ref{thm:nphard-rooks})   & NP-complete (Thm~\ref{thm:nphard-queens})\\
 \hline
 \hline
\end{tabular}
\caption{Maximum independent domination problems.}\label{table:maxdom}
\end{table}

\begin{table}
\centering

    \centering
\begin{tabular} {  c|p{5cm}|p{5cm}}
 \hline
 \hline
 \multicolumn{3}{|c|}{Min Rook Domination Problems} \\
 \hline
Boards & attacking & non-attacking (independent) \\
\hline
\hline
 square completion & P (trivial) & P (trivial) \\
 \hline
 all polyominoes & NP-hard \cite[Thm~3]{alpert2021art}, \newline NP-complete (Thm~\ref{thm:minrookNPcomplete}) & NP-hard \cite[Lem.~14]{alpert2021art}, \newline
 NP-complete (Thm~\ref{thm:minrookNPcomplete})  \\
 \hline
 $d$--polycubes $d\geq 3$ & NP-hard \cite[Thm~3]{alpert2021art}, \newline NP-complete (Thm~\ref{thm:minrookNPcomplete})& 
 NP-complete (Thm~\ref{thm:minrookNPcomplete}) \\
 \hline
 \hline

\end{tabular}
\caption{Minimum domination problems for rooks.}\label{table:mindom-rooks}
\end{table}

\begin{table}
\centering

     \centering
\begin{tabular} {  c|p{5cm}|p{5cm}}
 \hline
 \hline
 \multicolumn{3}{|c|}{Min Queen Domination Problem} \\
 \hline
Boards & attacking & non-attacking (independent)\\
\hline
\hline
 square completion & NP-complete? (Conjecture~\ref{conj:min-dom-queens}) & NP-complete (Corollary~\ref{coro:min-ind-dom-npcomp-queens}) \\
 \hline
 all polyominoes & NP-hard \cite[Thm~4]{alpert2021art}, \newline NP-complete (Thm~\ref{thm:minqueenNPcomplete}) & NP-complete (Thm~\ref{thm:minqueenNPcomplete}) \\
 \hline
 $d$--polycubes $d\geq 3$ & NP-hard \cite[Thm~4]{alpert2021art}, \newline NP-complete (Thm~\ref{thm:minqueenNPcomplete})& NP-complete (Thm~\ref{thm:minqueenNPcomplete})  \\
 \hline
 \hline

\end{tabular}
\caption{Minimum domination problems for queens.}\label{table:mindom-queens}
\end{table}

\subsection{Main Results}
We now review the main results of this paper and the relevant definitions.

\begin{definition}
Let $d\geq 2$. A $\mathbf{d}$\textbf{--polycube} is a finite union of unit cubes of the regular cubic tessellation of $\mathbb{R}^d$ with an interior that is connected (notice that polycubes could have holes or cavities). A $2$--polycube is also known as a \textbf{polyomino}.
\end{definition}

We now give a precise definition of the attacking powers of rooks and queens on polycubes. For this purpose, we can imagine the $d$--cubes of the $d$--polycube centered at the points of $\mathbb{Z}^d$.

\begin{definition}[Rook attacking powers]
Suppose a rook is at $(0, \ldots, 0)$ in a $d$--polycube $P$. It guards this point. In addition, for each point that has all its coordinates $0$, except for one coordinate $\pm 1$, we say that the $d$--dimensional rook guards or attacks tiles which have coordinates given by all natural-number multiples of this point such that all the smaller natural-number multiples are tiles of $P$.  \end{definition}

\begin{definition}[Queen attacking powers~\cite{alpert2021art}]
A $d$--dimensional queen placed in a $d$--polycube $P$ at the origin in the cubic tessellation of $\mathbb{R}^d$ can attack all tiles of $P$ with coordinate points equal to 0 or $\pm 1$ and all natural-number multiples of such points, as long as all the smaller natural-number multiples of the points are tiles of $P$.
\end{definition}

Let us note that there are $d$ directions for rooks and $(3^d-1)/2$ for queens, as can be seen by placing the piece at the center of a $d$-hypercube. Figure~\ref{fig:mov-pieces} illustrates these definitions for $d=3$.

Another important note is that the rook's or queen's line of attack ends when it crosses outside the polycube.

\begin{figure}
\centering
 \begin{tikzpicture}[x=(220:0.5cm), y=(-40:0.5cm), z=(90:0.353cm)]
\glasselemcube[sangria]{-1}{-1}{-1}{.1}
\glasselemcube[sangria]{-1}{-1}{0}{.1}
\glasselemcube[sangria]{-1}{-1}{1}{.1}
\glasselemcube[sangria]{-1}{0}{-1}{.1}
\glasselemcube[forestgreen]{-1}{0}{0}{.8}
\glasselemcube[sangria]{-1}{0}{1}{.1}
\glasselemcube[sangria]{-1}{1}{-1}{.1}
\glasselemcube[sangria]{-1}{1}{0}{.1}
\glasselemcube[sangria]{-1}{1}{1}{.1}
\glasselemcube[sangria]{0}{-1}{-1}{.1}
\glasselemcube[forestgreen]{0}{-1}{0}{.8}
\glasselemcube[sangria]{0}{-1}{1}{.1}
\glasselemcube[forestgreen]{0}{0}{-1}{.8}
\glasselemcube[black]{0}{0}{0}{.8}
\cubelabel{0}{0}{0}{\rook}
\glasselemcube[forestgreen]{0}{0}{1}{.8}
\glasselemcube[sangria]{0}{1}{-1}{.1}
\glasselemcube[forestgreen]{0}{1}{0}{.8}
\glasselemcube[sangria]{0}{1}{1}{.1}
\glasselemcube[sangria]{1}{-1}{-1}{.1}
\glasselemcube[sangria]{1}{-1}{0}{.1}
\glasselemcube[sangria]{1}{-1}{1}{.1}
\glasselemcube[sangria]{1}{0}{-1}{.1}
\glasselemcube[forestgreen]{1}{0}{0}{.8}
\glasselemcube[sangria]{1}{0}{1}{.1}
\glasselemcube[sangria]{1}{1}{-1}{.1}
\glasselemcube[sangria]{1}{1}{0}{.1}
\glasselemcube[sangria]{1}{1}{1}{.1}
\end{tikzpicture}
\qquad
\begin{tikzpicture}[x=(220:0.5cm), y=(-40:0.5cm), z=(90:0.353cm)]
\glasselemcube[forestgreen]{-1}{-1}{-1}{.7}
\glasselemcube[forestgreen]{-1}{-1}{0}{.7}
\glasselemcube[forestgreen]{-1}{-1}{1}{.7}
\glasselemcube[forestgreen]{-1}{0}{-1}{.7}
\glasselemcube[forestgreen]{-1}{0}{0}{.8}
\glasselemcube[forestgreen]{-1}{0}{1}{.7}
\glasselemcube[forestgreen]{-1}{1}{-1}{.7}
\glasselemcube[forestgreen]{-1}{1}{0}{.7}
\glasselemcube[forestgreen]{-1}{1}{1}{.7}
\glasselemcube[forestgreen]{0}{-1}{-1}{.7}
\glasselemcube[forestgreen]{0}{-1}{0}{.8}
\glasselemcube[forestgreen]{0}{-1}{1}{.7}
\glasselemcube[forestgreen]{0}{0}{-1}{.8}
\glasselemcube[black]{0}{0}{0}{.8}
\cubelabel{0}{0}{0}{\queen}
\glasselemcube[forestgreen]{0}{0}{1}{.8}
\glasselemcube[forestgreen]{0}{1}{-1}{.7}
\glasselemcube[forestgreen]{0}{1}{0}{.8}
\glasselemcube[forestgreen]{0}{1}{1}{.7}
\glasselemcube[forestgreen]{1}{-1}{-1}{.7}
\glasselemcube[forestgreen]{1}{-1}{0}{.7}
\glasselemcube[forestgreen]{1}{-1}{1}{.7}
\glasselemcube[forestgreen]{1}{0}{-1}{.7}
\glasselemcube[forestgreen]{1}{0}{0}{.8}
\glasselemcube[forestgreen]{1}{0}{1}{.7}
\glasselemcube[forestgreen]{1}{1}{-1}{.7}
\glasselemcube[forestgreen]{1}{1}{0}{.7}
\glasselemcube[forestgreen]{1}{1}{1}{.7}
\glasselemcube[forestgreen]{-2}{-2}{2}{.9}
\glasselemcube[sangria]{-2}{-1}{2}{.3}
\glasselemcube[forestgreen]{-2}{0}{2}{.9}
\glasselemcube[sangria]{-2}{1}{2}{.3}
\glasselemcube[forestgreen]{-2}{2}{2}{.9}
\glasselemcube[sangria]{-1}{-2}{2}{.3}
\glasselemcube[sangria]{-1}{-1}{2}{.3}
\glasselemcube[sangria]{-1}{0}{2}{.3}
\glasselemcube[sangria]{-1}{1}{2}{.3}
\glasselemcube[sangria]{-1}{2}{2}{.3}
\glasselemcube[forestgreen]{0}{-2}{2}{.9}
\glasselemcube[sangria]{0}{-1}{2}{.3}
\glasselemcube[forestgreen]{0}{0}{2}{.9}
\glasselemcube[sangria]{0}{1}{2}{.3}
\glasselemcube[forestgreen]{0}{2}{2}{.9}
\glasselemcube[sangria]{1}{-2}{2}{.3}
\glasselemcube[sangria]{1}{-1}{2}{.3}
\glasselemcube[sangria]{1}{0}{2}{.3}
\glasselemcube[sangria]{1}{1}{2}{.3}
\glasselemcube[sangria]{1}{2}{2}{.3}
\glasselemcube[forestgreen]{2}{-2}{2}{.9}
\glasselemcube[sangria]{2}{-1}{2}{.3}
\glasselemcube[forestgreen]{2}{0}{2}{.9}
\glasselemcube[sangria]{2}{1}{2}{.3}
\glasselemcube[forestgreen]{2}{2}{2}{.9}
\end{tikzpicture}
\qquad
\begin{tikzpicture}[x=(220:0.5cm), y=(-40:0.5cm), z=(90:0.353cm)]
\draw[->] (0,0,0) -- (2,0,0) node[left]{x};
\draw[->] (0,0,0) -- (0,2,0) node[right]{y};
\draw[->] (0,0,0) -- (0,0,2) node[above]{z};
\end{tikzpicture}
\caption{Possible movement (in green) of a rook and a queen centered in a $3\times 3\times 3$ cube. For the queen, a $5\times 5$ level is put at the top of the cube. 3D models corresponding to these structures can be found at \url{https://skfb.ly/oz8tJ} and \url{https://skfb.ly/oz8tn}, respectively.}
\label{fig:mov-pieces}

\end{figure}

Our first two main results study the computational complexity of a class of maximum independent domination problems on polycubes.

\begin{theorem}\label{thm:nphard-queens}
Solving the maximum non-attacking queen domination problem on $d$--polycubes is NP-complete for $d \geq 3$.
\end{theorem}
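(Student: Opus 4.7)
The decision version reads: given a $d$-polycube $P$ and an integer $k$, is there a set of at least $k$ pairwise non-attacking queens on $P$ that together dominate every tile of $P$? Membership in NP is immediate, since such a placement is a polynomial-size certificate whose non-attacking property (along the $13$ queen lines of attack in dimension three, truncated at the boundary of $P$) and whose domination coverage can both be verified in time polynomial in $|P|$.

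For hardness I plan to reduce from $\psat$ at $d = 3$; the extension to $d \geq 4$ then follows by taking a Cartesian product with a small axis-aligned stub whose contribution to both the count and the domination is constant. The construction embeds the planar incidence graph of the input formula in a near-planar slab of a $3$-polycube and uses three kinds of gadget---variable, wire, and clause---that interact only through designated ports. The third dimension is used both to cross wires without creating spurious queen lines and to lift sensitive tiles out of one another's diagonals.

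Each variable gadget is designed to admit exactly two maximum non-attacking placements of the same size $v_i$, one per truth value of $x_i$, each of which on its own dominates every tile of the gadget; each wire gadget has a unique maximum non-attacking placement of size $w_i$ whose ``phase'' (encoding $\true$ or $\false$) is rigidly forced by the phase of the adjacent variable gadget and dominates every wire tile; each clause gadget is a three-port junction carrying one distinguished \emph{bonus tile} with the following crucial behaviour---the bonus tile may be occupied by a non-attacking queen if and only if at least one incoming wire arrives in its $\true$-phase, while the bonus tile and the rest of the junction are dominated by the incoming wire queens whether or not the bonus tile is itself occupied. Setting $k = \sum_i v_i + \sum_i w_i + (\text{number of clauses})$, a satisfying assignment yields an explicit non-attacking dominating set of size $k$; conversely, any non-attacking dominating set of size at least $k$ must saturate every variable and every wire gadget in one of its intended maximum configurations and occupy every bonus tile, which forces every clause to contain at least one $\true$-phase incoming wire, yielding a satisfying assignment.

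The principal obstacle is geometric rather than combinatorial. A $3$-queen commands thirteen lines, and a naive planar layout would allow queens in distinct gadgets to attack one another through the polycube, breaking both the independence of the target configuration and the count. I will control this by making each gadget locally thick, with flat exterior walls arranged so that every queen line leaving a gadget is immediately truncated at the polycube boundary (using the attack-termination rule), and by inserting small vertical jogs---short steps in the third coordinate---at wire crossings and wherever two wires would otherwise share a row, a column, or a diagonal. The most delicate bookkeeping is to verify, for every valid combination of incoming phases at each clause, that the remaining tiles of the clause junction (including an unoccupied bonus tile in the ``already-satisfied'' case) are still dominated by the wire queens alone; this I will carry out gadget by gadget via a finite case analysis using explicit domination tables, which simultaneously certifies independence and full domination for every local configuration.
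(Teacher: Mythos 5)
Your plan coincides with the paper's proof: membership in NP by direct polynomial-time checking, then a reduction from \psat{} in dimension three built from variable, connection (wire), and clause gadgets, where each variable gadget has exactly two equal-size maximum placements encoding its truth value, each wire rigidly propagates the phase at a truth-value-independent cost, each clause gadget accepts exactly one bonus queen precisely when some incoming literal is \true{}, and the target count is the sum of the gadget totals plus the number of clauses (Lemmas~\ref{lem:literal-gadget-queens_3D}--\ref{lem:equivalent-queens}). The only substantive content your proposal defers---the explicit gadgets certifying these postulated properties, including the truncation of all thirteen queen lines at gadget boundaries---is exactly what the paper supplies in Figures~\ref{fig:variable-gadget-queens_3D_construction}--\ref{fig:clause-gadget-clausel-queens-3D}, with the minor wrinkle that the connection gadgets' contribution is not one constant per wire but $4n_{4neigh}+5n_{3neigh}+6n_{2neigh}$ depending on local adjacency, which still leaves the count independent of the transmitted truth value as your argument requires.
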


\begin{theorem}\label{thm:nphard-rooks}
Solving the maximum non-attacking rook domination problem on $d$--polycubes is NP-complete for $d \geq 3$.
\end{theorem}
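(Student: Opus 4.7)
First, membership in NP is straightforward. A set of $k$ rooks in $P$ is a certificate of size polynomial in $|P|$, and independence can be checked in polynomial time: for each of the $\binom{k}{2}$ pairs sharing $d-1$ coordinates, we scan the axis-aligned segment between them and confirm whether a cube of $P$ is missing somewhere along it; an attack occurs precisely when the segment is entirely contained in $P$.

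The plan for NP-hardness is to reduce from a planar variant of 3-SAT (for instance $\psat$), following the classical variable-clause-wire paradigm used in many geometric hardness proofs, including the min-domination results of \cite{alpert2021art} and the companion Theorem~\ref{thm:nphard-queens}. Given a formula $\varphi$ with $n$ variables and $m$ clauses, we build in polynomial time a 3-polycube $P_\varphi$ and an integer $k_\varphi$ such that $P_\varphi$ admits $k_\varphi$ pairwise non-attacking rooks iff $\varphi$ is satisfiable. The construction uses three gadget families: a \emph{variable gadget} whose maximum independent rook set is attained in exactly two canonical configurations, labelled \textbf{T} and \textbf{F}; a \emph{wire gadget}, essentially a decorated corridor, which propagates the chosen value by forcing a synchronized local choice all along the wire (exploiting that in a single straight corridor of $P$ all cells mutually attack); and a \emph{clause gadget} that gains one extra rook over its baseline optimum precisely when at least one of its three incoming wires carries the satisfying value of the corresponding literal. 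The target $k_\varphi$ is then the sum of the local gadget optima plus $m$, so that all clause gadgets must be simultaneously satisfied.

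The essential reason for requiring $d \geq 3$ is that the third axis provides enough freedom to route the planar variable-clause incidence graph without introducing unintended rook interactions: two wires that would cross in the plane are placed in different $z$-slices, and a single empty cube between them already suffices to block rook visibility along both the $x$- and $y$-axes. This is the same phenomenon that underlies the jump from the polynomial 2D rook case (Theorem~12 of \cite{alpert2021art}) to the conjectured hardness in the 2D queen case (Conjecture~\ref{conj:nphard-queens-2d}) and to our 3D result. Extending from $d = 3$ to $d \geq 4$ is then routine: embed $P_\varphi$ as a single slice in $\mathbb{Z}^d$ (with a minor thickening if needed to preserve connectivity), so that each additional axis direction contains only single-cube rook lines and the maximum non-attacking rook count is preserved.

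The main obstacle will be establishing the rigidity of the construction within the global polycube. Since a rook attacks along any uninterrupted axis-aligned line of $P_\varphi$, the gadget-separating gaps must be placed with care to confine every gadget's attack lines to within the gadget and to its prescribed wire endpoints, ruling out any "shortcut" attack between distant components. The accompanying accounting argument must then show that (i) every optimal non-attacking rook configuration saturates the local gadget optima simultaneously, and (ii) this global saturation corresponds bijectively to satisfying assignments of $\varphi$. Designing explicit gadgets with the required two-state rigidity and verifying that they interact cleanly through the wire interface is where the bulk of the technical work will lie.
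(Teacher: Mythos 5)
Your high-level plan coincides exactly with the paper's proof: membership in NP by polynomial-time verification, then a reduction from \psat{} built out of two-state variable gadgets, signal-propagating connection gadgets, clause gadgets that gain one extra rook precisely when satisfied, and a target count equal to the sum of the local optima plus the number of clauses; the extension to $d\geq 4$ by embedding the $3$--polycube as a slice is also what the paper does. However, as written the proposal has a genuine gap: it contains no gadgets. You explicitly defer ``designing explicit gadgets with the required two-state rigidity and verifying that they interact cleanly'' to future work, but that is the entire mathematical content of the theorem. This deferral is not a routine omission here, because the two-dimensional version of the problem is solvable in polynomial time (\cite[Thm~12]{alpert2021art} via bipartite matching), so no purely planar gadget family can work and one cannot appeal to ``standard'' SAT-gadget constructions; the existence of rook gadgets whose maximum independent sets are rigid enough to carry a Boolean signal is exactly what has to be exhibited. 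The paper does this concretely: a $12$-cube variable gadget whose maximum of $6$ non-attacking rooks is attained in exactly two configurations, a connection gadget (again with maximum $6$) that lifts the signal into the third coordinate, turns corners and duplicates it within height at most $9$, and a clause gadget of length $\ell_R(c)$ admitting $\ell_R(c)+1$ rooks iff the clause is satisfied, followed by a size bound and a counting lemma tying the global optimum $m_R+n_{clause}$ to satisfiability.

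Two smaller points. First, your NP-membership argument only verifies independence; the decision problem as defined also requires that the configuration dominate $P$, so the certificate check must additionally confirm that every cube is attacked (still clearly polynomial, as in the paper's Lemma~\ref{lem:verifP}). Second, your stated reason for needing $d\geq 3$ --- routing crossings of the incidence graph into different $z$-slices --- is not quite where the third dimension earns its keep: the instance is already planar, so there are no crossings to resolve. The dimension is needed \emph{inside} the wires, because a two-dimensional rook corridor cannot rigidly transmit a truth value (again, the 2D problem is in P); the paper's connection gadget climbs in the $z$-direction to achieve this.
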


It was proven that, for $d=2$,  there is a polynomial-time algorithm  that solves the maximum domination problem for non-attacking rooks~\cite[Thm~12]{alpert2021art}. The passage to three dimensions is the crucial point at which the complexity of this problem changes.

We also complete the proof of
NP-completeness of the minimum domination problem for non-attacking and attacking queens and rooks, extending the proof of NP-hardness of~\cite[Thms~3,4]{alpert2021art} to the cases of non-attacking pieces, and then showing they are all NP-complete.
\begin{theorem}\label{thm:minrookNPcomplete}
Solving the minimum domination problem for attacking and for non-attacking rooks on $d$--polycubes for $d\geq 2$ is NP-complete.
\end{theorem}

\begin{theorem}\label{thm:minqueenNPcomplete}
Solving the minimum domination problem for attacking and for non-attacking queens on $d$--polycubes for $d\geq 2$ is NP-complete.
\end{theorem}

We now review the structure of the paper. In Section~\ref{sec:prelims}, we present the relevant definitions and prove Theorems~\ref{thm:minrookNPcomplete} and~\ref{thm:minqueenNPcomplete}. In Section~\ref{sec:complexity},	 we prove Theorems~\ref{thm:nphard-queens} and~\ref{thm:nphard-rooks}. In Section~\ref{sec:openquestions}, we study domination problems specifically on polyominoes, in particular on convex polyominoes, and state a series of open questions and conjectures. In Section~\ref{sec:modelcompu}, we present a computational model and use it to compute new values of domination numbers for classical problems. The software developed and implemented in the course of this research is publicly available on GitHub~\cite{QandRsoftware}. Also, in Section~\ref{sec:modelcompu}, we give a brief description of a video game on chess domination on polyominoes; the game can be played at this link: \url{https://www.erikaroldan.net/queensrooksdomination}.

\section{Preliminaries}\label{sec:prelims}

We now define the two problems we will consider.
\begin{definition}[Independent rook domination] We say that an instance of the non-attacking rook set problem for $d$--polycubes is a pair $(P,m)_{d}^R$, where $P$ is a $d$--polycube and $m$ is a positive integer. The problem asks whether there exists a non-attacking configuration of $m$ rooks placed in $P$ that dominates $P$.
\end{definition}

\begin{definition}[Independent queen domination] We say that an instance of the non-attacking queen set problem for $d$--polycubes is a pair $(P,m)_{d}^Q$, where $P$ is a $d$--polycube and $m$ is a positive integer. The problem asks whether there exists a non-attacking configuration of $m$ queens placed in $P$ that dominates $P$.
\end{definition}

We will study the complexity class of the maximum and minimum $m$ possible for both instances $(P,m)_d^Q$ and $(P,m)_d^R$. To prove NP-complexity, we follow the usual process. To help readers follow our proofs, we briefly review the steps here.  We first show the problem is in the class NP by showing that verifying a solution is done in polynomial time. This is shown in Lemma~\ref{lem:verifP} below.

 We then exhibit a polynomial reduction from a known NP-complete problem. This will be the purpose of Section~\ref{sec:complexity}. In our case, we will begin from the following restriction of \textsc{planar sat}.

 \begin{definition}[\psat~\cite{Cerioli08}]
 Given a set of Boolean variables 
 $x_i$ that satisfy a set of clauses of the form $x_{i_1}\lor x_{i_2}$ or $x_{i_1}\lor x_{i_2} \lor x_{i_3}$, with each $x_{i_k}$ appearing being the literal $x_{i_k}$ or $\overline{x}_{i_k}$, we construct a bipartite graph with the two sets of vertices given by the variables and the clauses, and with the set of edges given by linking a clause $C$ and a variable $x$ if the clause $C$ contains either literals $x$ or $\overline{x}$. The problem of deciding whether there exists a truth assignment to the variables such that all clauses are satisfied is called $\textsc{3sat}$. If the bipartite graph constructed is planar, this problem is instead called \textsc{planar 3sat}. If, furthermore, each variable $x_i$ appears exactly in three clauses, we call the problem \textsc{planar 3sat with exactly 3 occurrences per variable}, and it is denoted as \psat.
 \end{definition}

  An example of such instance is given in Figure~\ref{fig:ex_psat}.

\begin{figure}[h]
\centering
\begin{tikzcd}
 c_2=\overline{x}_1\lor \overline{x}_2 \arrow[r, dash] \arrow[dd,dash] & x_1 \arrow[d,dash] \arrow[r,dash] & c_4= x_1 \lor \overline{x}_3 \arrow[dd, dash]\\
 & c_1=x_1 \lor x_2 \lor x_3 \arrow[ld,dash] \arrow[dr,dash]& \\
 x_2 \arrow[r,dash] & c_3=\overline x_2\lor \overline x_3 & \arrow[l,dash] x_3
\end{tikzcd}
\caption{An instance of $\psat$ with three variables, $x_1$, $x_2$ and $x_3$, and four clauses, $c_1=x_1\lor x_2\lor x_3$, $c_2=\overline x_1\lor \overline x_2$, $c_3=\overline x_2\lor \overline x_3$ and $c_4=x_1\lor \overline x_3$. There are three solutions: $(x_1,x_2,x_3) = (1,0,1),\ (1,0,0),\ (0,1,0)$}\label{fig:ex_psat} 
\end{figure}

  \begin{remark}
   The problem \psat{} was proven to be NP-complete~\cite{Cerioli08}, with the further restriction that each variable appears once positively and twice negatively. This was the problem used for the proofs in~\cite{alpert2021art}. We do not require this additional restriction. Note, however, that with the further restriction that all clauses contain exactly 3 literals, the problem becomes solvable in polynomial time~\cite{BSK03}.
  \end{remark}

 \begin{proposition}[\cite{Cerioli08}]\label{prop:psatNP}
 The problem \psat{} is NP-complete.
 \end{proposition}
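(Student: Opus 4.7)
The statement is attributed to Cerioli et al.~\cite{Cerioli08}, so my plan is to outline how one would reconstruct their argument rather than invent a new one. Membership in NP is immediate: given a truth assignment to the variables, one checks all clauses in linear time. The work is therefore in the hardness reduction, which I would perform from ordinary \textsc{planar 3sat} (NP-complete by Lichtenstein's classical result), adjusting each variable's occurrence count to exactly three while preserving planarity.

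The plan is to process the input formula variable by variable. For a variable $x$ appearing $k$ times with $k \neq 3$, introduce fresh copies $x^{(1)}, x^{(2)}, \dots, x^{(m)}$ and chain them with equivalence gadgets, i.e.\ pairs of clauses $(x^{(i)} \lor \overline{x^{(i+1)}})$ and $(\overline{x^{(i)}} \lor x^{(i+1)})$, so that in any satisfying assignment all copies take the same value. I would then redistribute the original occurrences of $x$ among the copies so that every copy ends up in at most three clauses, and pad any copy that is short of three occurrences with an auxiliary clause of the form $(x^{(i)} \lor \overline{x^{(i)}} \lor \dots)$ or by inserting it into another freshly created equivalence chain, tuning until the count is exactly three. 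A bit of bookkeeping (including splitting the equivalence clauses themselves across several copies when needed) suffices to make the arithmetic work out for any $k$.

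Planarity is the delicate point. I would use the bipartite variable–clause embedding of the original \textsc{planar 3sat} instance, locally replace the vertex for $x$ by a small path or cycle of copies $x^{(1)},\dots,x^{(m)}$, and route the new equivalence clauses along this path. Because the replacement is confined to a small disk around the original vertex and the original incident edges can be reattached, in cyclic order, to the copies along the path, no new crossings are introduced; this is the standard variable-splitting trick for planar SAT variants. Combining this with the membership argument yields NP-completeness, and the whole construction is clearly polynomial in the size of the input.

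The main obstacle in writing out such a proof carefully is the combinatorics of hitting the count of three \emph{exactly}: one must guarantee that the padding clauses themselves do not inflate the occurrences of auxiliary variables past three. I would handle this by making sure each auxiliary variable is itself introduced as part of a planar equivalence chain with controlled degree, so that the counting is self-consistent. Beyond that, no new idea is needed, and for the purposes of the present paper it is enough to cite~\cite{Cerioli08} for the full details.
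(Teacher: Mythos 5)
The paper gives no proof of this proposition at all---it is imported verbatim from~\cite{Cerioli08}, so there is no internal argument to compare yours against; the question is only whether your reconstruction of the cited reduction is sound. Your overall route (membership is trivial; reduce from Lichtenstein's \textsc{planar 3sat} by splitting each variable into copies forced equal by small clauses, rearranged locally in a disk around the variable vertex so planarity survives) is indeed the standard one and is the right approach. One remark in your favour that you left implicit: the target problem here allows 2-literal clauses, and this is essential---if every clause had exactly three literals and every variable exactly three occurrences, the instance would always be satisfiable, so the binary equivalence/implication clauses are not just a convenience but the reason the restricted problem stays hard.

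The concrete gap is in the occurrence arithmetic of your equivalence gadget. Chaining copies with \emph{pairs} of clauses $(x^{(i)} \lor \overline{x^{(i+1)}})$ and $(\overline{x^{(i)}} \lor x^{(i+1)})$ gives every interior copy of the chain four occurrences from the chain alone, before its original clause is counted, so you overshoot three and no amount of ``redistributing'' the original occurrences can fix an interior copy that already exceeds the budget. The standard repair, which is what you should write down, is a one-directional implication \emph{cycle}: for a variable with $k\ge 2$ occurrences introduce $x^{(1)},\dots,x^{(k)}$ and the clauses $(\overline{x^{(i)}} \lor x^{(i+1)})$ with indices mod $k$. The cycle of implications forces all copies equal, each copy occurs exactly twice in the cycle (once positively, once negatively) and once in its original clause---exactly three, with no padding and no auxiliary variables whose own counts need controlling---and the cycle embeds in a disk around the original variable vertex with the clause edges attached in cyclic order, so planarity is preserved. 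This also explains the refinement stated in the paper's remark (each variable once positive, twice negative, after fixing the polarity convention). Your tautological padding clauses $(x^{(i)}\lor\overline{x^{(i)}}\lor\dots)$ should be avoided: they are always satisfied and so are harmless logically, but they reintroduce exactly the bookkeeping problem you identify, and they are unnecessary once the cycle gadget is used; only the degenerate case $k=1$ needs separate (easy) treatment, e.g.\ by first eliminating such variables by unit-style preprocessing.
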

The reduction will proceed by introducing \textbf{gadgets} to encode variables, connections, and clauses as maximum independent rooks or queens set problems on polycubes. The main difficulty is how to show that the size of the polycubes created is polynomially bounded, and that the algorithm to do so runs in polynomial time.

Let us begin our proof of complexity by showing that the verification of a candidate for domination and independence can be done in polynomial time.
\begin{lemma}\label{lem:verifP}
Verifying that a placement of rooks or a placement of queens dominate a $d$--polycube and verifying that a placement of queens or rooks that are not attacking each other can be done in polynomial time.
\end{lemma}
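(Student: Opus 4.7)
The plan is to reduce both verification tasks to straightforward geometric walks through the polycube. Let $n$ denote the number of tiles of the $d$-polycube $P$ and $m$ the number of placed pieces; a natural encoding lists the coordinates of the $n$ tiles together with the $m$ piece positions, taking $O((n+m)\cdot d\cdot \log C)$ bits, where $C$ bounds the coordinate magnitudes. Since independence forces the pieces to sit on distinct tiles, we may assume $m\leq n$. I would first store the tile set of $P$, and the set of piece positions, in dictionaries keyed by coordinate tuples, so that membership queries cost time polynomial in $d\log C$.

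For each placed piece $p$, I would enumerate the tiles that $p$ guards by iterating over its attack directions: $2d$ directions for a rook, $3^{d}-1$ for a queen. For each direction $v$, I would walk from the position of $p$ in unit steps of $v$, adding each visited tile to the guarded set of $p$ and terminating the ray as soon as a step leaves $P$. This procedure directly implements the ``line of attack ends when it crosses outside the polycube'' rule stated after the definitions of rook and queen attacks. Since any such ray contains at most $n$ tiles of $P$, the enumeration costs $O(d\cdot n)$ elementary operations per rook and $O(3^{d}\cdot n)$ per queen, hence $O(m\cdot n\cdot f(d))$ overall, where $f(d)$ is the number of attack directions of the piece type considered.

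To verify domination it then suffices to take the union of the guarded sets of all pieces and check that it equals the tile set of $P$, adding only $O(n)$ dictionary operations. To verify independence, during the same enumeration I would flag a collision whenever a guarded tile coincides with another piece position, using the dictionary of piece positions built at the start. Both tests therefore run in time polynomial in $n+m$ for $d$ fixed, which is the regime relevant to the NP-completeness theorems stated above. No combinatorially hard subproblem is hiding in the verification, so the only point that might look delicate, namely correctly cutting off a ray at the boundary of $P$, is taken care of by the walking rule, which merely checks membership tile by tile.
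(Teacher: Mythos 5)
Your proposal is correct and follows essentially the same approach as the paper's one-line proof: take each piece, enumerate the cubes it guards by walking along its attack rays, and then read off both domination and independence. You simply spell out the data structures and the $O(m\cdot n\cdot f(d))$ cost (with the $3^{d}$ factor harmless since $d$ is fixed in the theorems), which the paper leaves implicit.
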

\begin{proof}
We recall that we can construct a graph from a $d$-polycube and a given choice of chess pieces (rook or queen). The vertices are the $d$-cubes and there is an edge between two vertices if the chosen piece can reach one $d$-cube from the other. Asking whether a set of pieces dominates the $d$-polycube is then equivalent to verifying if a given set of vertices (the $d$-cubes occupied by the pieces) dominates the graph. Similarly, asking whether pieces are not attacking each other is equivalent to asking if a given set of vertices forms a coclique, that is, a set of vertices with no edge between each other. Both of these problems are verifiable polynomially---see~\cite{PS98}. This proves that the solution is verifiable in polynomial time.
\end{proof}

To end this section, we give the proofs of Theorem~\ref{thm:minrookNPcomplete} and Theorem~\ref{thm:minqueenNPcomplete}. They are easy extensions of the proof of NP-hardness of minimum domination of (potentially) attacking rooks or queens~\cite{alpert2021art}.

\begin{proof}[Proof of Theorem~\ref{thm:minrookNPcomplete}]
By Lemma~\ref{lem:verifP}, we know that verifying the validity of a given set of rooks for the minimum domination problem for attacking and non-attacking rooks can be checked in polynomial time, and so the problems are in the class NP. The proof of~\cite[Thm~3]{alpert2021art} for attacking rooks gives that it is NP-hard, so it is in fact NP-complete.

Finally, we note that the gadgets used in the proof of~\cite[Thm~3]{alpert2021art} also hold true for non-attacking rooks; thus, the domination problem for non-attacking rooks is also NP-complete.
\end{proof}

\begin{proof}[Proof of Theorem~\ref{thm:minqueenNPcomplete}]
That a set of queens guards a polycube and that they do not attack each other can be checked in polynomial time by Lemma~\ref{lem:verifP}, so the minimum domination problem for attacking and non-attacking queens is in the class NP. The proof of~\cite[Thm~4]{alpert2021art} also holds for non-attacking queens since the gadgets only use non-attacking setups, and so it proves that both problems are NP-hard. Hence, they are NP-complete.
\end{proof}

\begin{remark}
    In many cases, for polycubes, the minimum dominating sets of attacking or non-attacking rooks or queens will differ; the latter being generally bigger. For queens, this already happens for polyominoes. For rooks, the solutions will be of the same size on polyominoes by~\cite[Lemma~14]{alpert2021art}, but they are different on higher-dimension polycubes. Figure~\ref{fig:diff_non-att_and_att} shows two small examples where the attacking case has a smaller solution. The proof of NP-hardness of~\cite{alpert2021art}, in fact, considered the smaller subset of polyominoes for which both attacking and non-attacking problems have the same solution, thus proving NP-hardness for general polycubes since they include this subset for which the problem is NP-hard.
\end{remark}

\begin{figure}[h]
\begin{align*}
   \begin{tikzpicture}
        \foreach \x/\y in {1/0,3/0,1/1,3/1,2/2,1/3,3/3,1/4,3/4, 2/4, 2/0 } {
            \path [draw=brown!70, fill=brown!70] (+3.5+\x-0.45, 1.5+\y-1.45)
            -- ++(0,.9)
            -- ++(.9,0)
            -- ++(0,-.9)
            --cycle;
        }
        \foreach \x/\y in {2/1,2/3 } {
            \path [draw=teal!70, fill=teal!70] (+3.5+\x-0.45, 1.5+\y-1.45)
            -- ++(0,.9)
            -- ++(.9,0)
            -- ++(0,-.9)
            --cycle;
             \node[anchor=west] at (+3.5+\x-0.5, \y+.5) {\queen};
        }
        \end{tikzpicture}
        &&
         \begin{tikzpicture}[x=(220:0.5cm), y=(-40:0.5cm), z=(90:0.353cm)]
\glasselemcube[brown]{0}{-1}{0}{.75}
\glasselemcube[brown]{-1}{0}{1}{.75}
\glasselemcube[teal]{0}{0}{0}{.95}
\cubelabel{0}{0}{0}{\rook}
\glasselemcube[brown]{1}{0}{0}{.75}
\glasselemcube[teal]{0}{0}{1}{.95}
\cubelabel{0}{0}{1}{\rook}
\glasselemcube[brown]{0}{1}{1}{.75}
\end{tikzpicture}
\end{align*}
    \caption{Left: an instance of a polyomino with attacking dominating set of 2 queens and non-attacking dominating set of 3 queens; right: an instance of a polycube with attacking dominating set of 2 rooks and non-attacking of 3 rooks.}
    \label{fig:diff_non-att_and_att}
\end{figure}

\section{Complexity of the domination problems}\label{sec:complexity}
This section is dedicated to proving Theorems~\ref{thm:nphard-queens} and~\ref{thm:nphard-rooks}. We will begin with the rooks and proceed to the queens.

\subsection{NP-Completion of maximum independent domination of rooks on polycubes}

We first introduce the variable gadget for non-attacking rooks in Figure~\ref{fig:variable-gadget-rooks}. There are two ways to place the maximum number of rooks on the gadget.
\begin{figure}[h]
    \begin{center}
    \begin{tikzpicture}[x=(220:0.5cm), y=(-40:0.5cm), z=(90:0.353cm)]
    \foreach \x/\y in {-2/0}
    {
        \elemcube{\x}{\y}{0}
    }
    \elemcube[black]{-1}{-1}{0}
    \foreach \x/\y in {-1/0}
    {
        \elemcube{\x}{\y}{0}
    }
    \elemcube[sangria]{-1}{1}{0}
    \foreach \x/\y in {0/-2, 0/-1, 0/1,0/2}
    {
    \elemcube{\x}{\y}{0}
    }
    \elemcube[sangria]{1}{-1}{0}
    \foreach \x/\y in {1/0}
    {
        \elemcube{\x}{\y}{0}
    }
    \elemcube[black]{1}{1}{0}
    \foreach \x/\y in {2/0}
    {
    \elemcube{\x}{\y}{0}
    }

\foreach \x/\y in {-1/-1,1/1}
{
    \cubelabel{\x}{\y}{0}{\textbf{T}}
}
\foreach \x/\y in {-2/0,0/-2,2/0,0/2}
{
    \cubelabel{\x}{\y}{0}{\rook}
}
    \foreach \x/\y in {1/-1,-1/1}
{
    \cubelabel{\x}{\y}{0}{\textbf{F}}
}
\end{tikzpicture}
\qquad \qquad \qquad
\begin{tikzpicture}
    \draw[->] (0,0) -- (1,0) node[right] {E};
    \draw[->] (0,0) -- (0,1) node[above] {N};
    \draw[->] (0,0) -- (0,-1) node[below] {S};
    \draw[->] (0,0) -- (-1,0) node[left] {W};
\end{tikzpicture}
    \end{center}
    \caption{Variable gadget with rooks; when \true, 2 additional rooks go on the dark cubes (T), and when \false, 2 go on the light red cubes (F). A 3D model corresponding to this structure can be found at \url{https://skfb.ly/oz8tZ}. The general orientation is given next to it.}
    \label{fig:variable-gadget-rooks}
\end{figure}

\begin{lemma}\label{lem:variable-gadget-rooks}
The maximum number of dominating non-attacking rooks for the polycube of Figure~\ref{fig:variable-gadget-rooks} is 6, and there are only 2 ways to achieve this maximum number.
\end{lemma}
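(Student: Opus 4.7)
The plan is to classify all maximum-size independent dominating sets for the attack graph on the $12$ cubes of this gadget. I first label the cells in three groups: the four \emph{arm tips} at $(\pm 2, 0)$ and $(0, \pm 2)$; the four \emph{middle-edge cells} at $(\pm 1, 0)$ and $(0, \pm 1)$; and the four \emph{corners} at $(\pm 1, \pm 1)$. The crucial structural feature is that the center $(0, 0)$ is absent, so rook attacks along the middle row $y=0$ and the middle column $x=0$ are blocked by this hole. Consequently, each arm tip attacks only its adjacent middle-edge cell; each middle-edge cell attacks its arm tip and the two corners sharing its row or column; and the four corners form a $4$-cycle in the attack graph, with diagonally opposite corners being the only independent pairs.

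From this I would deduce the key reduction: each arm tip is dominated only by itself or by its adjacent middle-edge cell, and these two attack each other, so exactly one of the pair lies in any independent dominating set $S$. Letting $k \in \{0, 1, 2, 3, 4\}$ denote the number of arm tips in $S$, this already gives $4$ forced rooks, with the $4-k$ arm tips outside $S$ forcing their adjacent middle-edge cells into $S$ to cover them.

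The main case is $k=4$: all four middle-edge cells are excluded (each attacks an arm tip in $S$), so any further rooks must come from the corners. The maximum independent set on the corner $4$-cycle has size $2$, given by the two diagonal pairs $\{(-1,-1), (1,1)\}$ (the T cells) and $\{(-1,1), (1,-1)\}$ (the F cells); either pair dominates the other two corners along the row/column through the relevant middle-edge cell. This yields $|S|=6$ in exactly two configurations. For $k<4$ I would verify that the middle-edge cells already in $S$ attack most of the corners, leaving at most one corner that can be added to $S$: $k=3$ leaves two unattacked corners sharing a row, giving $|S|\leq 5$; $k=2$ on perpendicular axes leaves a single unattacked corner, giving $|S|=5$; $k=2$ on the same axis leaves no unattacked corner, giving $|S|=4$; and $k\leq 1$ gives $|S|=4$. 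The main obstacle is organizing these subcases cleanly, but the symmetry of the gadget reduces each value of $k$ to a single essentially distinct configuration, so the analysis is routine.
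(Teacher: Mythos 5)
Your proposal is correct, and your bookkeeping is actually tighter than the paper's own two-sentence argument. The paper bounds the count by looking at how the ``middle holed section'' (the four middle-edge cells plus the four corners) can be guarded --- either by the cross of four middle cells, which dominates everything and yields only $4$, or by two opposite corners, which forces four more rooks on the protruding tips, yielding $6$ --- and then asserts the two T/F placements. Your decomposition is different and cleaner: the four disjoint tip/middle pairs each contribute exactly one rook (the tip is dominated only from within its pair, and the pair is a clique), giving exactly $4$, and the corner $4$-cycle contributes at most $2$, giving the bound $6$ immediately; achieving $6$ forces a diagonal corner pair, which excludes all four middles and hence forces all four tips, so exactly the two configurations survive. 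This handles the mixed configurations (some middles in, some corners in) that the paper's dichotomy glosses over, at the cost of the short $k<4$ case sweep. The only blemish is cosmetic: in the $k=3$ case the two surviving corners share a column (or a row, depending on which tip is dropped), not necessarily a row, but either way they attack each other and your conclusion $\lvert S\rvert\le 5$ stands.
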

\begin{proof}
The middle holed section can be guarded either by 4 non-attacking rooks in a cross pattern, which would dominate the full polyomino, or by 2 rooks in opposite corners, which then requires 4 rooks on the remaining unguarded cubes. Thus, at most 6 non-attacking rooks can guard the polyomino.

Four rooks can be placed on the protruding corners. Then two additional ones can either be placed on the two dark T cubes, for which we say the variable has truth value \true, or they can be placed on the two red F cubes, for which we say the variable has truth value \false.
\end{proof}

\begin{figure}[h!]
\begin{subfigure}{.45\textwidth}
\centering
\begin{tikzpicture}[x=(220:0.5cm), y=(-40:0.5cm), z=(90:0.353cm)]
\foreach \x/\y/\z in {-2/-2/2, -1/-2/1, -1/-1/1, -1/-2/2, -1/-2/3, 0/-2/1, 0/-2/3, 0/-2/4, 1/-2/1}
    {
        \elemcube{\x}{\y}{\z}
    }
\elemcube[sangria]{1}{-2}{1}
\foreach \x/\y/\z in {1/-2/2, 1/-2/3, 2/-2/2}
    {
    \elemcube{\x}{\y}{\z}
    }
      \cubelabel{-2}{-2}{2}{\rook}
    \elemcube[sangria]{-1}{-1}{1}
    \elemcube[black]{1}{-1}{1}
    \colourcubeXface[sangria]{-1}{-2}{3}
     \colourcubeZface[sangria]{-1}{-2}{3}
    \elemcube[black]{1}{-2}{3}
    \cubelabel{-1}{-2}{3}{\textbf{F}}
    \cubelabel{1}{-2}{3}{\textbf{T}}
    \cubelabel{-1}{-1}{1}{\textbf{F}}
    \cubelabel{1}{-1}{1}{\textbf{T}}
    \elemcube{0}{-2}{4}
    \foreach \x/\y/\z in {2/-2/2,0/-2/4}
    {
    \cubelabel{\x}{\y}{\z}{\rook}
    }
    \end{tikzpicture}
    \quad \quad
    \begin{tikzpicture}[x=(220:0.5cm), y=(-40:0.5cm), z=(90:0.353cm)]
    \elemcube[sangria]{0}{-1}{0}
    \elemcube{0}{0}{0}
    \elemcube[black]{0}{1}{0}
    \elemcube[black]{1}{-1}{0}
    \elemcube[sangria]{1}{1}{0}
\foreach \x/\y/\z in {0/-2/1,0/-1/1,0/1/1,0/2/1}
    {
        \elemcube{\x}{\y}{\z}
    }
      \foreach \x/\y/\z in {0/-2/1,0/2/1}
     {
     \cubelabel{\x}{\y}{\z}{\rook}
     }
    \elemcube[black]{0}{-1}{2}
    \elemcube{0}{0}{2}
    \elemcube[sangria]{0}{1}{2}
     \cubelabel{1}{1}{0}{\textbf{F}}
     \cubelabel{1}{-1}{0}{\textbf{T}}
     \cubelabel{0}{1}{2}{\textbf{F}}
     \cubelabel{0}{-1}{2}{\textbf{T}}
\elemcube{0}{0}{3}
     \foreach \x/\y/\z in {0/0/3}
     {
     \cubelabel{\x}{\y}{\z}{\rook}
     }
    \end{tikzpicture}
    \caption{The connection gadget (two views). \\ \url{https://skfb.ly/ozJZn}}
  \end{subfigure}
  \begin{subfigure}{.45\textwidth}
  \centering
    \begin{tikzpicture}[x=(220:0.5cm), y=(-40:0.5cm), z=(90:0.353cm)]
\foreach \x/\y in {-2/0}
    {
        \elemcube{\x}{\y}{0}
    }
    \elemcube[black]{-1}{-1}{0}
    \foreach \x/\y in {-1/0}
    {
        \elemcube{\x}{\y}{0}
    }
    \elemcube[sangria]{-1}{1}{0}
    \foreach \x/\y in {0/-2, 0/-1, 0/1 ,0/2}
    {
    \elemcube{\x}{\y}{0}
    }
    \elemcube[sangria]{1}{-1}{0}
    \foreach \x/\y in {1/0}
    {
        \elemcube{\x}{\y}{0}
    }
    \elemcube[black]{1}{1}{0}
    \foreach \x/\y in {2/0}
    {
    \elemcube{\x}{\y}{0}
    }
\foreach \x/\y in {-1/-1,1/1}
{
    \cubelabel{\x}{\y}{0}{\textbf{T}}
}
\foreach \x/\y in {-2/0,0/-2,2/0,0/2}
{
    \cubelabel{\x}{\y}{0}{\rook}
}
    \foreach \x/\y in {1/-1,-1/1}
{
    \cubelabel{\x}{\y}{0}{\textbf{F}}
}
\foreach \x/\y/\z in {-2/-2/2, -1/-2/1, -1/-1/1, -1/-2/2, -1/-2/3, 0/-2/1, 0/-2/3, 0/-2/4}
    {
        \elemcube{\x}{\y}{\z}
    }
\elemcube[sangria]{1}{-2}{1}
\foreach \x/\y/\z in {1/-2/2, 1/-2/3, 2/-2/2}
    {
    \elemcube{\x}{\y}{\z}
    }
      \cubelabel{-2}{-2}{2}{\rook}
    \elemcube[sangria]{-1}{-1}{1}
    \elemcube[black]{1}{-1}{1}
    \colourcubeXface[sangria]{-1}{-2}{3}
     \colourcubeZface[sangria]{-1}{-2}{3}
    \elemcube[black]{1}{-2}{3}
    \cubelabel{-1}{-2}{3}{\textbf{F}}
    \cubelabel{1}{-2}{3}{\textbf{T}}
        \cubelabel{-1}{-1}{1}{\textbf{F}}
    \cubelabel{1}{-1}{1}{\textbf{T}}
    \elemcube{0}{-2}{4}
    \foreach \x/\y/\z in {2/-2/2,0/-2/4}
    {
    \cubelabel{\x}{\y}{\z}{\rook}
    }
\end{tikzpicture}
\caption{Connection gadget on a variable gadget. \\ \url{https://skfb.ly/ozV7S}}
\end{subfigure}
\caption{The connection gadget for rooks; values are transmitted along the T and F cubes for \true{} and \false{}, respectively. There are some cubes not shown on the image. (A): the left has a T cube not shown; right has an F cube not shown; (B): two T cubes not shown behind and under the F cube of the connection gadget on the variable gadget}\label{fig:connection-gadget-rooks}
\end{figure}

To transmit the signal, we will need to use the third dimension. Note, however, that the gadgets will never need to exceed 9 cubes in height.  Let us introduce the connection gadget in Figure~\ref{fig:connection-gadget-rooks}. It has a maximum covering number by rooks of 6. It is placed on the variable gadget on one of the NW or SW corners of the variable gadget, with the protruding T and F tiles on F and T tiles, respectively. According to the variable gadget truth value, the maximum independent covering of rooks on the connection gadget will add rooks on the T or F tiles.

 \begin{lemma}\label{lem:connection-gadget-rooks}
 The signal of the variable gadget is propagated by the connection gadget of Figure~\ref{fig:connection-gadget-rooks}. Furthermore, the connection gadget can be used to turn corners.
 \end{lemma}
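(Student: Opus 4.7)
The plan is to prove this lemma in two stages, mirroring the two clauses of the statement: first show that the connection gadget alone behaves like the variable gadget (maximal independent rook number $6$, achieved in exactly two configurations labelled \textbf{T} and \textbf{F}); and second show that, when stacked on the variable gadget, the shared vertical rook attack forces the connection's label to match the variable's label rather than flip it.

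For the first stage, I would run an analysis analogous to the proof of Lemma~\ref{lem:variable-gadget-rooks}. The connection gadget contains a small $3 \times 3$ "holed" section (in the plane $y=-2$ of the figure) with four protruding arms, one of which terminates in the tile labelled with a rook. Four of the six maximal rooks are forced onto the tips of the arms, and the remaining two must dominate the holed $3\times 3$ section. By the same cross-versus-corner argument used for the variable gadget, these last two rooks must sit either on the two dark \textbf{T} cubes or on the two light red \textbf{F} cubes, giving exactly two maximal configurations worth $6$ rooks each. I would also observe that the gadget's height never exceeds a small constant (bounded by $9$, as noted in the text), so this adds only polynomial size.

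For the second stage — the propagation — I would consider the combined polycube in Figure~\ref{fig:connection-gadget-rooks}(b) and count maximal rooks. The variable gadget contributes $6$ rooks and the connection gadget contributes $6$ rooks, and the two gadgets meet only along a single shared cube, so the joint maximum is $12$ if and only if the configurations are consistent under mutual rook attack. The crucial observation is that the connection's protruding \textbf{T} cube is stacked directly above the variable's \textbf{F} cube (and symmetrically \textbf{F} above \textbf{T}). Thus a rook on the variable's \textbf{T} cube attacks vertically up the $z$-axis and forbids a rook on the connection's \textbf{F} cube; the connection's two rooks must therefore be placed on its \textbf{T} cubes, propagating the \emph{true} signal. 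The symmetric argument handles the \emph{false} signal. I would check that no spurious attacks cross the polycube (rook vision stops at empty space), so the only interaction between the two gadgets is the single shared column.

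For the corner-turning claim, the plan is to note that the connection gadget is placed on one of the NW or SW corners of the variable gadget — i.e.\ its orientation relative to the variable is a free choice — and that its own protruding \textbf{T}/\textbf{F} column can in turn be used as the input column of another variable or connection gadget rotated by $90^\circ$ in the horizontal plane. Chaining two appropriately rotated copies produces a $90^\circ$ turn in the $xy$-plane while maintaining the forcing relation, and the two orientations together suffice to route a signal between any two gadgets in the planar embedding. I expect the main obstacle to be purely notational: keeping track of which axis the rook attacks along at each corner joint, and confirming that the $3$D cavity structure never accidentally creates a second attack path that would allow the signal to be flipped — this is a finite local check I would perform case by case on the two possible incoming orientations.
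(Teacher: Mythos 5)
Your overall strategy matches the paper's: fix the forced rooks on the arm tips, show the remaining rooks encode a binary state, use the vertical rook attack between stacked interface tiles to force consistency with the variable gadget, and turn corners by rotating the gadget in space. However, your first stage misreads the gadget in a way that breaks the count. The connection gadget of Figure~\ref{fig:connection-gadget-rooks}(a) has only \emph{three} arm tips carrying forced rooks (at $(-2,-2,2)$, $(2,-2,2)$ and $(0,-2,4)$ in the figure's coordinates); the fourth direction is replaced by the two tiles protruding in the $y$--direction, $(\pm 1,-1,1)$, which are precisely the T/F interface tiles. Consequently the six maximal rooks split as $3+3$, not $4+2$: in the \true{} state the three signal rooks sit at $(1,-1,1)$ (the protruding T tile), $(1,-2,3)$ (the top T tile) and $(-1,-2,1)$ (the cube just behind the protruding F tile). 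Your proposed $4+2$ placement fails outright: with rooks only on the arm tips and on the two T cubes $(1,-1,1)$ and $(1,-2,3)$, the tiles $(-1,-2,1)$ and $(-1,-1,1)$ are left undominated, since no remaining rook shares an unobstructed axis line with them. So the ``cross-versus-corner'' argument imported from Lemma~\ref{lem:variable-gadget-rooks} does not transfer verbatim; the third signal rook is needed to cover the bottom row of the holed $3\times 3$ section, and pinning down its position is exactly the content of the paper's proof.

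A second inaccuracy: the two gadgets do not ``meet along a single shared cube''---they share no cube at all. The connection gadget sits one level above the variable gadget, and the interaction consists of \emph{two} vertical interface columns (the connection's T tile over the variable's F tile, and its F tile over the variable's T tile), plus the column above the variable's arm-tip rook at $(0,-2,0)$. Your forcing mechanism---a rook on the variable's occupied truth tile attacks straight up and excludes the oppositely labelled connection tile---is the right one and is what the paper uses, but the consistency check must be run over both interface columns, not one. With the $3+3$ decomposition and the two-column interface corrected, the rest of your plan (propagation by chaining, corners and signal duplication by rotation, and a finite local check that no spurious attack path flips the signal) goes through essentially as in the paper.
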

 \begin{proof}
 As we connect to the variable gadget by placing a red F tile on a dark T tile and a dark T tile on a red F tile, the truth value of the variable gadget will influence where we can place the maximum 6 rooks on the connection gadget. First, 3 rooks are placed on the corners. Then, if the variable has the value \true,  the remaining rooks are placed on T tiles. Specifically, one is placed on the T tile above the variable gadget, one is placed on the cube just behind the F cube above the variable gadget, and one is placed on the top T cube of the connection gadget. This process can be repeated on the connection gadget to place a variable tile at the top. One can also remain on the same level by repeating the process a second time. The front view of the resulting process  can be seen
 in Figure~\ref{fig:cutsection-connection-rooks}.
 \begin{figure}[h]
 \centering
 \begin{tikzpicture}[scale=1]
        \foreach \x/\y in {1/1,2/1,3/1,4/1,4/2,5/1,5/2,5/3,5/4,5/5,6/4,6/5,7/5,8/4,8/5,9/5,9/4,9/3,9/2,9/1,10/2,10/1,11/1,12/1,13/1} {
            \path [draw=brown!, fill=brown!30] (.5+\x-0.45, .5+\y-1.45)
            -- ++(0,.9)
            -- ++(.9,0)
            -- ++(0,-.9)
            --cycle;
        }

        \foreach \x/\y in {1/1,3/1,5/1,5/3,5/5,7/5,9/5,9/3,9/1,11/1,13/1} {
            \node at (.5+\x, \y-.5) {\rook};
        }
          \foreach \x/\y in {4/1,5/2,6/4,8/5,9/4,10/2,12/1} {
          \path [draw=gray!, fill=gray!30] (.5+\x-0.45, .5+\y-1.45)
            -- ++(0,.9)
            -- ++(.9,0)
            -- ++(0,-.9)
            --cycle;
            \node[anchor=west] at (.5+\x-0.4, \y-.5) {\textbf{T}};
        }
          \foreach \x/\y in {2/1,4/2,5/4,6/5,8/4,9/2,10/1} {
          \path [draw=sangria!, fill=sangria!30] (.5+\x-0.45, .5+\y-1.45)
            -- ++(0,.9)
            -- ++(.9,0)
            -- ++(0,-.9)
            --cycle;
            \node[anchor=west] at (.5+\x-0.4, \y-.5) {\textbf{F}};
        }
    \end{tikzpicture}
    \caption{Propagating the signal: a 2D front view of the connection; 11 rooks are not shown. }\label{fig:cutsection-connection-rooks}
\end{figure}

Finally, we can go around corners by rotating the connection gadget, always propagating the signal, and duplicating that signal if needed, as shown in Figure~\ref{fig:corner-gadget-rooks}. The height we need for this is 9 cubes.
 \end{proof}

 \begin{figure}[h]
 \centering
 \begin{subfigure}{.45\textwidth}
 \centering
 \begin{tikzpicture}[x=(220:0.5cm), y=(-40:0.5cm), z=(90:0.353cm)
 ]
\foreach \x/\y/\z in {-2/0/0,-1/-1/0, -1/0/0,-1/1/0, 0/-2/0, 0/-1/0, 0/1/0 ,0/2/0,1/-1/0, 1/0/0,1/1/0,2/0/0}
    {
    \elemcube{\x}{\y}{\z}
    }
\elemcube{-1}{-3}{3}
\elemcube{1}{-3}{3}
\foreach \x/\y/\z in {-2/-2/2, -1/-2/1, -1/-1/1, -1/-2/2, -1/-2/3, 0/-2/1, 0/-2/3, 0/-2/4, 1/-2/1, 1/-1/1, 1/-2/2, 1/-2/3, 2/-2/2}
    {
    \elemcube{\x}{\y}{\z}
    }
    \foreach \x/\y/\z in {-2/0/0,-1/-1/0, -1/0/0,-1/1/0}
     {
     \elemcube{\x}{\y-4}{\z+4}
     }
\foreach \x/\y/\z in 
{0/-2/0,0/-1/-1,0/-1/0,0/-1/1,,0/0/-1,0/0/1,0/0/2,0/1/-1,0/1/0,0/1/1,0/2/0,1/-1/-1,1/1/-1}
    {
    \elemcube{\x-2}{\y-4}{\z+6}
    }
\foreach \x/\y/\z in { 0/-2/0, 0/-1/0, 0/1/0 ,0/2/0,1/-1/0, 1/0/0,1/1/0,2/0/0}
    {
    \elemcube{\x}{\y-4}{\z+4}
    }

\end{tikzpicture}
\caption{Turning the corner with connection gadgets.\\ \url{https://skfb.ly/ozUTt}}
 \end{subfigure}
 \begin{subfigure}{.45\textwidth}
 \centering
\begin{tikzpicture}[x=(220:0.5cm), y=(-40:0.5cm), z=(90:0.353cm)
]
\foreach \x/\y/\z in {-2/0/0,-1/-1/0, -1/0/0,-1/1/0, 0/-2/0, 0/-1/0, 0/1/0 ,0/2/0,1/-1/0, 1/0/0,1/1/0,2/0/0}
    {
    \elemcube{\x}{\y}{\z}
    }
\elemcube{-1}{-3}{3}
\elemcube{1}{-3}{3}
\foreach \x/\y/\z in {-2/-2/2, -1/-2/1, -1/-1/1, -1/-2/2, -1/-2/3, 0/-2/1, 0/-2/3, 0/-2/4, 1/-2/1, 1/-1/1, 1/-2/2, 1/-2/3, 2/-2/2}
    {
    \elemcube{\x}{\y}{\z}
    }
    \foreach \x/\y/\z in {-2/0/0,-1/-1/0, -1/0/0,-1/1/0}
     {
     \elemcube{\x}{\y-4}{\z+4}
     }
\foreach \x/\y/\z in 
{0/-2/0,0/-1/-1,0/-1/0,0/-1/1,0/0/-1,0/0/1,0/0/2,0/1/-1,0/1/0,0/1/1,0/2/0,1/-1/-1,1/1/-1}
    {
    \elemcube{\x-2}{\y-4}{\z+6}
    }
\foreach \x/\y/\z in { 0/-2/0, 0/-1/0, 0/1/0 ,0/2/0,1/-1/0, 1/0/0,1/1/0,2/0/0}
    {
    \elemcube{\x}{\y-4}{\z+4}
    }
\foreach\x/\y/\z in {0/-2/0,0/-1/-1,0/-1/0,0/-1/1,0/0/-1,0/0/1,0/0/2,0/1/-1,0/1/0,0/1/1,0/2/0}
{
\elemcube{\x+2}{\y-4}{\z+6}
}
\end{tikzpicture}
\caption{Duplicating the signal with connection gadgets. \\ \url{https://skfb.ly/ozUSO}}
\end{subfigure}
 \caption{Connection gadgets propagating and duplicating the signal.}\label{fig:corner-gadget-rooks}
 \end{figure}

We introduce the clause gadgets in Figure~\ref{fig:clause-2gadget-rooks}. The clause gadget is created as follows. Let $c$ be a clause and suppose that the variable gadgets are aligned on the NE axis. We place a connector %
\begin{tikzpicture}[x=(200:0.5cm), y=(-40:0.5cm), z=(90:0.353cm),baseline= {(current bounding box.center)}, scale = .25]
\foreach \x/\y/\z in {0/-1/0, 0/0/0,0/0/1}
{\elemcube[forestgreen]{\x}{\y}{\z}}
\end{tikzpicture}
on top of the NW red F tile in the variable $x$ if $x$ appears as $x\in c$ and on the NW gray T tile if it is negated. The connectors are joined together by alternating top %
\begin{tikzpicture}[x=(200:0.5cm), y=(-40:0.5cm), z=(90:0.353cm),baseline= {(current bounding box.center)}, scale = .25]
\foreach \x/\y/\z in { 0/0/1,0/0/2}
{\elemcube[forestgreen]{\x}{\y}{\z}}
\end{tikzpicture}
and left %
\begin{tikzpicture}[x=(200:0.5cm), y=(-40:0.5cm), z=(90:0.353cm),baseline= {(current bounding box.center)}, scale = .25]
\foreach \x/\y/\z in { 0/0/1,0/1/1}
{\elemcube[forestgreen]{\x}{\y}{\z}}
\end{tikzpicture}
nodes in a line. We denote $\ell_R(c)$ as the length of the clause gadget associated with $c$
    \begin{equation}\label{eq:f_clause}
        \ell_R(c) := |\{\text{nodes and connectors in $c$}\}|.
    \end{equation}
Figure~\ref{fig:clause-gadget-connection-rooks} presents one example.

\begin{lemma}\label{lem:clause-gadget-rooks}
Let $c$ be a  clause and $K(c)$ be the clause gadget associated with it. A maximum rook placement on $K(c)$ contains $\ell_R(c)$ rooks if $c$ evaluates to \false{} and $\ell_R(c)+1$ if it evaluates to \true.
\end{lemma}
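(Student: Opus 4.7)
The plan is to analyze $K(c)$ as a chain of pieces (connectors and nodes) threading through $3$-space, with the attack pattern between consecutive pieces controlled by their shared row or column at each junction. I would split the argument into a universal upper bound, a universal lower bound, and the $+1$ dichotomy tied to satisfaction.

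First, for the universal upper bound of $\ell_R(c)+1$ rooks, note that each L-shaped connector (3 cubes) admits at most two non-attacking rooks (on the two tips, never together with the corner cube), while each 2-cube node admits at most one rook. However, adjacent pieces meet at a shared cube or along a shared row/column, so a rook at a junction cube of one piece blocks a second rook on the adjoining piece. A careful tally of these junction constraints along the chain of $\ell_R(c)$ pieces caps the total at $\ell_R(c)+1$.

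Second, I would establish the universal lower bound of $\ell_R(c)$ rooks by exhibiting a canonical placement valid for every truth assignment of the variables: on each piece, place a single rook on a cube that sits above the base of the chain (in particular, not vertically above any T or F tile of the attached variable gadget). Since the chain has $\ell_R(c)$ pieces, this yields $\ell_R(c)$ rooks, and I would verify pairwise non-attack by checking that no two chosen cubes share a coordinate axis, using the alternation of top and left nodes that swings the chain in different directions at each junction.

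Third, for the central equivalence, I would argue both directions. For the easy direction, if some literal of $c$ is true, Lemma~\ref{lem:variable-gadget-rooks} forces the corresponding variable gadget to place its rooks on the T or F tile opposite to the one supporting the connector; hence the connector's base cube is free from vertical attack, and I would show that the canonical placement can be augmented with an additional rook on this base cube, producing $\ell_R(c)+1$ rooks. Conversely, if $c$ is false, each of the three connectors has its base cube attacked vertically by a rook of the attached variable gadget, so each connector hosts at most one rook; combined with the one-rook-per-node bound and the junction constraints, this pins the total at $\ell_R(c)$.

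The main obstacle I anticipate is the converse direction of the equivalence: ruling out rigorously that some unusual placement could exceed $\ell_R(c)$ rooks when $c$ is false, especially at the alternating top-node/left-node junctions where the chain changes direction in 3-space. I would handle this by induction along the chain, tracking for each piece the occupancy of its junction cubes with its neighbours, and showing that the junction constraints propagate consistently through both axis-switches and through the L-shaped connectors, so the count cannot locally gain the extra unit anywhere along the chain.
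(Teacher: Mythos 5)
Your skeleton (universal upper bound, canonical $\ell_R(c)$-rook placement, dichotomy via the vertical attack from the variable gadget's T/F tile into the connector) matches the paper's intent, and your treatment of the \false{} case and of the augmentation when a literal is true is sound. But there is a genuine gap in your upper bound, and it sits exactly where the paper's one-line key observation does the work. The mechanism you invoke --- ``adjacent pieces meet at a shared cube or along a shared row/column, so a rook at a junction cube of one piece blocks a second rook on the adjoining piece'' --- does not cap the total at $\ell_R(c)+1$. A connector's second rook goes on its cube lying in the gadget's spine (the black T cube), and that cube does \emph{not} conflict with the adjacent node's tip, so local junction constraints between consecutive pieces never prevent a connector from taking two rooks. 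If two or three literals of $c$ are satisfied, two or three connectors each have their spine cube free of vertical attack from below, and your argument as stated would allow $\ell_R(c)+2$ or $\ell_R(c)+3$ rooks, contradicting the lemma. The paper closes this with a single \emph{global} observation: all the connectors' second-rook positions lie on one common, unobstructed line of attack that also passes through every node's base cube, so at most one rook can sit anywhere on that line, hence at most one connector in the entire gadget can host two rooks. Your proposed induction ``tracking the occupancy of junction cubes with neighbours'' could be repaired to carry this information (track whether the spine line is already occupied anywhere to the left, not merely whether the previous piece's junction cube is occupied), but as written it only records pairwise adjacency and would wrongly permit several spine rooks separated by intervening pieces.

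A secondary remark: you spend your anticipated effort on the converse for a \false{} clause, but that is the easy case --- each connector's spine cube and corner cube are both attacked vertically by the variable gadget's rook, so each connector trivially holds at most one rook and the bound $\ell_R(c)$ follows with no chain induction at all. The direction that genuinely needs the shared-line observation is bounding a \emph{satisfied} clause (with possibly several true literals) by $\ell_R(c)+1$, which your proposal does not address.
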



\def\inc{1} 
\begin{figure}[h]
\begin{subfigure}{.45\textwidth}
\centering
\begin{tikzpicture}[rotate = -50,x=(245:0.5cm), y=(-30:0.5cm), z=(90:0.353cm)
]
\foreach \x/\y/\z in {0/-1/1,6/-1/1}
{\elemcube[forestgreen]{\x}{\y}{\z}}
\foreach \x/\y/\z in {0/-1/1,6/-1/1}
    {
        \cubelabel{\x}{\y}{\z}{\rook}
    }
\foreach \x/\y/\z in {0/0/1}
{\elemcube[forestgreen]{\x}{\y}{\z}}
\elemcube[black]{0}{0}{2}
\colourcubeZface[black]{0}{0}{2}
\cubelabel{0}{0}{2}{\textbf{T}}
\foreach \x/\y/\z in {1/0/2,1/0/3,2/0/2,2/1/2,3/0/2,3/0/3,4/0/2,4/1/2,5/0/2,5/0/3,6/0/1}
{\elemcube[forestgreen]{\x}{\y}{\z}}
\elemcube[black]{6}{0}{2}
\colourcubeZface[black]{6}{0}{2}
\cubelabel{6}{0}{2}{\textbf{T}}
\foreach \x/\y/\z in {1/0/3,2/1/2,3/0/3,4/1/2,5/0/3}
{\cubelabel{\x}{\y}{\z}{\rook}}
\end{tikzpicture}
\caption{Clause gadget linked to the clauses with two positive  or two negated literals.\\ \url{https://skfb.ly/ozVqt}}
\end{subfigure}
\begin{subfigure}{.45\textwidth}
\centering
\begin{tikzpicture}[rotate = -50,x=(245:0.5cm), y=(-30:0.5cm), z=(90:0.353cm)
]
\foreach \x/\y/\z in {6/-1/1,10/-1/1}
{\elemcube[forestgreen]{\x}{\y}{\z}}
\foreach \x/\y/\z in {6/-1/1,10/-1/1}
    {
        \cubelabel{\x}{\y}{\z}{\rook}
    }
    \foreach \x/\y/\z in {6/0/1}
{\elemcube[forestgreen]{\x}{\y}{\z}}
\elemcube[black]{6}{0}{2}
\colourcubeZface[black]{6}{0}{2}
\cubelabel{6}{0}{2}{\textbf{T}}
\foreach \x/\y/\z in {7/0/2,7/1/2,8/0/2,8/0/3,9/0/2,9/1/2}
{\elemcube[forestgreen]{\x}{\y}{\z}}
\elemcube{10}{0}{1}
\elemcube[black]{10}{0}{2}
\colourcubeZface[black]{10}{0}{2}
\cubelabel{10}{0}{2}{\textbf{T}}
\foreach \x/\y/\z in {11/0/2,11/1/2,12/0/2,12/0/3}
{\elemcube[forestgreen]{\x}{\y}{\z}}
\foreach \x/\y/\z in {7/1/2,8/0/3,9/1/2,11/1/2,12/0/3}
{\cubelabel{\x}{\y}{\z}{\rook}}
\end{tikzpicture}
\caption{Clause gadget linked to the clauses with one positive literal and one negated literal.\\ \url{https://skfb.ly/ozVqH}}
\end{subfigure}
\caption{The clause gadgets with two literals, all light green cubes are guarded, and one additional rook might be placed on one of the two dark T cubes.}\label{fig:clause-2gadget-rooks}
\end{figure}

\begin{proof}
Construct the clause gadget $K(c)$ from the clause $c$. On each top and left nodes there can be one rook, and there can be at most two rooks on the connectors. However, because the connectors are all placed on a line, only one can have two rooks. There is one node or connector for each tile; thus, $\ell_R(c)$. By construction, as $K$ is placed atop the variable, there can only be an extra rook if the clause evaluates to \true. For example, Figure~\ref{fig:clause-gadget-connection-rooks} presents a clause gadget $K(c)$ of length 13 for the clause $c=\mathbf{x_1}\lor \mathbf{x_2}\lor \mathbf{x_3}$. No extra rook can be placed if all literals are \false{}.
\end{proof}

\begin{figure}[h]
\centering
\begin{tikzpicture}[x=(200:0.5cm), y=(-40:0.5cm), z=(90:0.353cm)]
  \foreach \x/\y in {-2/0,-1/-1, -1/0,-1/1, 0/-2, 0/-1, 0/1 ,0/2,1/-1, 1/0,1/1,2/0}
    {
    \elemcube{\x-\inc}{\y+\inc}{0}
    }
\colourcubeZface[black]{-1-\inc}{-1+\inc}{0}
\colourcubeZface[black]{1-\inc}{1+\inc}{0}
\colourcubeZface[sangria]{-1-\inc}{1+\inc}{0}
\colourcubeZface[sangria]{1-\inc}{-1+\inc}{0}
\foreach \x/\y in {1/1,-1/-1}
    {
        \cubelabel{\x-\inc}{\y+\inc}{0}{\textbf{T}}
    }
\foreach \x/\y in {-2/0,0/-2,2/0,0/2}
    {
        \cubelabel{\x-\inc}{\y+\inc}{0}{\rook}
    }
\foreach \x/\y in {1/-1,-1/1}
    {
        \cubelabel{\x-\inc}{\y+\inc}{0}{\textbf{F}}
    }
  \foreach \x/\y in {-2/0,-1/-1, -1/0,-1/1, 0/-2, 0/-1, 0/1 ,0/2,1/-1, 1/0,1/1,2/0}
    {
    \elemcube{\x+6-\inc}{\y+\inc}{0}
    }
\colourcubeZface[black]{-1+6-\inc}{-1+\inc}{0}
\colourcubeZface[black]{1+6-\inc}{1+\inc}{0}
\colourcubeZface[sangria]{-1+6-\inc}{1+\inc}{0}
\colourcubeZface[sangria]{1+6-\inc}{-1+\inc}{0}

\foreach \x/\y in {1/1,-1/-1}
    {
        \cubelabel{\x+6-\inc}{\y+\inc}{0}{\textbf{T}}
    }
\foreach \x/\y in {-2/0,0/-2,2/0,0/2}
    {
        \cubelabel{\x+6-\inc}{\y+\inc}{0}{\rook}
    }
\foreach \x/\y in {1/-1,-1/1}
    {
        \cubelabel{\x+6-\inc}{\y+\inc}{0}{\textbf{F}}
    }
  \foreach \x/\y in {-2/0,-1/-1, -1/0,-1/1, 0/-2, 0/-1, 0/1 ,0/2,1/-1, 1/0,1/1,2/0}
    {
    \elemcube{\x+12-\inc}{\y+\inc}{0}
    }
\colourcubeZface[black]{-1+12-\inc}{-1+\inc}{0}
\colourcubeZface[black]{1+12-\inc}{1+\inc}{0}
\colourcubeZface[sangria]{-1+12-\inc}{1+\inc}{0}
\colourcubeZface[sangria]{1+12-\inc}{-1+\inc}{0}

\foreach \x/\y in {1/1,-1/-1}
    {
        \cubelabel{\x+12-\inc}{\y+\inc}{0}{\textbf{T}}
    }
\foreach \x/\y in {-2/0,0/-2,2/0,0/2}
    {
        \cubelabel{\x+12-\inc}{\y+\inc}{0}{\rook}
    }
\foreach \x/\y in {1/-1,-1/1}
    {
        \cubelabel{\x+12-\inc}{\y+\inc}{0}{\textbf{F}}
    }
\foreach \x/\y/\z in {0/-1/1,6/-1/1,12/-1/1}
{\glasselemcube[forestgreen]{\x}{\y}{\z}{.8}}
\foreach \x/\y/\z in {0/-1/1,6/-1/1,12/-1/1}
    {
        \cubelabel{\x}{\y}{\z}{\rook}
    }
\foreach \x/\y/\z in {0/0/1}
{\glasselemcube[forestgreen]{\x}{\y}{\z}{.8}}
\glasselemcube[black]{0}{0}{2}{.9}
\colourcubeZface[black]{0}{0}{2}
\cubelabel{0}{0}{2}{\textbf{T}}

\foreach \x/\y/\z in {1/0/2,1/0/3,2/0/2,2/1/2,3/0/2,3/0/3,4/0/2,4/1/2,5/0/2,5/0/3,6/0/1}
{\glasselemcube[forestgreen]{\x}{\y}{\z}{.8}}
\glasselemcube[black]{6}{0}{2}{.9}
\colourcubeZface[black]{6}{0}{2}
\cubelabel{6}{0}{2}{\textbf{T}}

\foreach \x/\y/\z in {7/0/2,7/1/2,8/0/2,8/0/3,9/0/2,9/1/2,10/0/2,10/0/3,11/0/2,11/1/2,12/0/1 }
{\glasselemcube[forestgreen]{\x}{\y}{\z}{.8}}
\glasselemcube[black]{12}{0}{2}{.9}C
\colourcubeZface[black]{12}{0}{2}
\cubelabel{12}{0}{2}{\textbf{T}}
\foreach \x/\y/\z in {1/0/3,2/1/2,3/0/3,4/1/2,5/0/3,7/1/2,8/0/3,9/1/2,10/0/3,11/1/2 }
{\cubelabel{\x}{\y}{\z}{\rook}}
\draw (12,5,0) node[below]{$\mathbf{x_1}$};
\draw (6,5,0) node[below] {$\mathbf{x_2}$};
\draw  (0,5,0) node[below] {$\mathbf{x_3}$};
\end{tikzpicture}
\caption{The clause gadget of the clause  $\mathbf{x_1}\lor \mathbf{x_2}\lor \mathbf{x_3}$.\\ \url{https://skfb.ly/ozV8L}}\label{fig:clause-gadget-connection-rooks}
\end{figure}

\begin{proposition}\label{prop:psat-to-polyc-rooks}
From an instance $C$ of $\psat$, it is possible to construct a polycube $P_3^R(C)$ of polynomial size in polynomial time.
\end{proposition}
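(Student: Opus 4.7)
The plan is to describe an algorithm that, on input a $\psat$ instance $C$ with $n$ variables and $m$ clauses, outputs a $3$-polycube $P_3^R(C)$ by assembling the gadgets of Lemmas~\ref{lem:variable-gadget-rooks}, \ref{lem:connection-gadget-rooks} and~\ref{lem:clause-gadget-rooks}. The overall idea is: compute a planar rectilinear embedding of the variable-clause bipartite graph of $C$ on a grid in the $XY$-plane, drop a copy of the variable gadget at each variable vertex, route each edge of the drawing by a chain of connection gadgets (turning corners and duplicating as needed), and cap each clause vertex with the clause gadget built above the corresponding variables.

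First I would invoke a classical planar orthogonal drawing algorithm (e.g.\ Tamassia-style). The bipartite graph associated to $C$ has $n+m$ vertices, at most $3m$ edges, and is planar by the definition of \psat. Because each variable appears in exactly three clauses, all variable vertices have degree $3$ and all clause vertices have degree at most $3$. Hence an orthogonal planar drawing on a grid of side $O(n+m)$ can be computed in polynomial time, with each edge represented as a rectilinear path of polynomially many bends and length.

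Next I would scale this drawing by a constant factor large enough to accommodate the footprint of the variable gadget (a $5\times 5$ bounding square, see Figure~\ref{fig:variable-gadget-rooks}) and the connection gadgets at corners and duplications (fitting in a constant-sized $3$-dimensional box of height at most $9$, as noted after Lemma~\ref{lem:connection-gadget-rooks}). At each variable vertex of the drawing I place a copy of the variable gadget in the $XY$-plane; along each edge I stack a chain of connection gadgets that propagate the signal upward, turn corners via the construction of Figure~\ref{fig:corner-gadget-rooks}, and, where a variable must feed several clauses, duplicate the signal as in the right panel of Figure~\ref{fig:corner-gadget-rooks}. The convention of attaching the connection gadget with $T$-faces on $F$-faces and vice versa, together with the polarity of the incidence of $x_i$ in the clause, determines which face is exposed on the top of each wire. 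Finally, at each clause vertex I assemble the clause gadget on top of the relevant wires following the recipe just before Lemma~\ref{lem:clause-gadget-rooks}, producing a polycube of length $\ell_R(c)$ atop the three incoming wires.

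The main obstacle, and where the work of this proof lies, is verifying two points simultaneously: (i) the resulting object is a well-defined polycube, i.e.\ the gadgets do not overlap and rooks in one gadget cannot attack into another; (ii) the total number of cubes is polynomial. Point (i) follows from using only the third dimension to route wires, from the bounded height ($\leq 9$) of the connection gadgets, and from the fact that connection and clause gadgets are constructed so that every line of sight terminates at a cube of the same gadget, so maximal placements in each gadget remain independent in the assembled polycube. For point (ii), each gadget has constant size, the number of gadgets used is bounded by the number of vertices plus the total edge length in the orthogonal drawing, which is $O((n+m)^2)$, and each clause gadget contributes $\ell_R(c) = O(n+m)$ cubes; the total is therefore $O((n+m)^3)$ cubes. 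All steps, from the orthogonal drawing to the gadget placement, run in polynomial time, so $P_3^R(C)$ can be produced in polynomial time and has polynomial size, as claimed.
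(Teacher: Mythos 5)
Your proposal is correct and follows essentially the same route as the paper: place a variable gadget at each variable, route signals with chains of connection gadgets of bounded height along a planar orthogonal embedding of the variable--clause graph, and cap clauses with clause gadgets; the paper simply defers the embedding details to the cited lemma of Alpert et al., whereas you spell out the Tamassia-style drawing explicitly. Your final count of $O((n+m)^3)$ cubes slightly overcounts (the clause gadgets contribute $O((n+m)^2)$ in total, so the whole polycube fits in a height-$12$ prism over an $O(n+m)\times O(n+m)$ grid, i.e.\ $O((n+m)^2)$ cubes, matching the paper's bound), but this does not affect the polynomiality claim.
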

\begin{proof}
Let $C$ be an instance of $\psat$. We need to construct a polycube from this instance that is polynomial in size, and this construction needs to be done in polynomial time. We begin by constructing a polycube $P_3^R(C)$.

As $C$ is planar, we draw it as a visibility representation~\cite{DHLVM83} and then transfer this on a grid fitting in $O(n^2)$ squares via the procedure described by Biedl \emph{et al}~\cite{BBNNPR13}. The clauses and variables become squares, and these squares are joined by paths of squares of the grid if there is an edge between them. Figure~\ref{fig:path_graph-psat} presents a path grid version of the graph of the instance of Figure~\ref{fig:ex_psat}.

\begin{figure}[h]
    \centering
    \begin{tikzpicture}
        \foreach \x/\y in {0/0,0/1,0/2,0/3,0/4,0/5,0/6,0/7,0/8,
        1/0,1/4,1/8,
        2/0,2/1,2/2,2/3,2/4,2/8,
        3/0,3/4,3/8,
        4/0,4/4,4/5,4/5,4/6,4/7,4/8,
        5/0,5/8,
        6/0,6/1,6/2,6/3,6/4,6/5,6/6,6/7,6/8
        } {
          \path [draw=gray!, fill=brown!50] (.5+\x-0.45, .5+\y-1.45)
            -- ++(0,.9)
            -- ++(.9,0)
            -- ++(0,-.9)
            --cycle;
        }
        \foreach \x/\y in {2/0,2/4,2/8}
        {
        \path [draw=gray!, fill=teal!50] (.5+\x-0.45, .5+\y-1.45)
            -- ++(0,.9)
            -- ++(.9,0)
            -- ++(0,-.9)
            --cycle;
        }
          \foreach \x/\y in {0/4,2/2,4/6,6/4}
        {
        \path [draw=gray!, fill=le-purple!50] (.5+\x-0.45, .5+\y-1.45)
            -- ++(0,.9)
            -- ++(.9,0)
            -- ++(0,-.9)
            --cycle;
        }
        \node[anchor=west] at (.5+2-0.45, 0-.52) {$x_1$};
        \node[anchor=west] at (.5+2-0.45, 4-.52) {$x_2$};
        \node[anchor=west] at (.5+2-0.45, 8-.52) {$x_3$};
        \node[anchor=west] at (.5+0-0.45, 4-.52) {$c_1$};
        \node[anchor=west] at (.5+2-0.45, 2-.52) {$c_2$};
        \node[anchor=west] at (.5+4-0.45, 6-.52) {$c_3$};
        \node[anchor=west] at (.5+6-0.45, 4-.52) {$c_4$};
    \end{tikzpicture}
    \caption{The grid path graph of the instance $C$ of Figure~\ref{fig:ex_psat}. The clauses are in purple, the variables are in teal and the connection squares are in brown.}
    \label{fig:path_graph-psat}
\end{figure}

The gadgets used are not planar, so we will need to extend this grid to three dimensions. To fit the gadgets, every variable node would fit a $5\times 5\times 1$ grid, but to put a propagating connection gadget above or under, we replace the square by $5\times 5\times 9$  cubes. The squares corresponding to the edges will correspond to the propagating process described in Lemma~\ref{lem:connection-gadget-rooks}. On the squares corresponding to the edges in straight lines, the gadgets are contained in a $5\times 5\times 9$ cube grid: this corresponds to the  two ways of propagating the signal, under or above; see Figure~\ref{fig:clause-gadget-connection-rooks}. As they are placed on the variable gadgets to begin with, we replace all these squares by $4\times 5 \times 9$ cubes. On the corner squares, the gadgets have height 9 and so we have to replace the square by $9\times 9 \times 17$ cubes to allow both the above or under passes. Duplicating the signal, similarly, needs $9\times 13 \times 17$ cubes. See Figure~\ref{fig:connection-gadget-rooks}.

Only the clause squares remain. Once the signal is propagated to a clause square, place is needed to arrange the variable gadgets so that they are at the same level and parallel to each other. The last condition is there because clauses must not be put on a cube having a connection; this is an important condition for Lemma~\ref{lem:equivalent-rooks}, and we show the reason for this in Figure~\ref{fig:problem_clause_rook}. There also need some space in between the literals: 3 extra cubes for a clause with 2 literals and 6 extra cubes for a clause with 3 literals. Putting clause at the same level and parallel to each other can be done via 2 extra propagation of the signal after turning the corner. The clause gadget itself then has a height of 3 cubes. Hence replacing each clause square with $20\times 15\times 20$ cubes should leave enough space.

\begin{figure}[h]
    \centering
 \begin{tikzpicture}[x=(220:0.5cm), y=(-40:0.5cm), z=(90:0.353cm)
 ]
\foreach \x/\y/\z in {-2/0/0,-1/-1/0, -1/0/0,-1/1/0, 0/-2/0, 0/-1/0, 0/1/0 ,0/2/0,1/-1/0, 1/0/0,1/1/0,2/0/0}
    {
    \elemcube{\x}{\y}{\z}
    }
\elemcube{-1}{-3}{3}
\elemcube{1}{-3}{3}
\foreach \x/\y/\z in {-2/-2/2, -1/-2/1, -1/-1/1, -1/-2/2, -1/-2/3, 0/-2/1, 0/-2/3, 0/-2/4, 1/-2/1, 1/-1/1, 1/-2/2, 1/-2/3, 2/-2/2}
    {
    \elemcube{\x}{\y}{\z}
    }
    \foreach \x/\y/\z in {-2/0/0,-1/-1/0, -1/0/0,-1/1/0}
     {
     \elemcube{\x}{\y-4}{\z+4}
     }
\foreach \x/\y/\z in { 0/-2/0, 0/-1/0, 0/1/0 ,0/2/0,1/-1/0, 1/0/0,1/1/0,2/0/0}
    {
    \elemcube{\x}{\y-4}{\z+4}
    }
    \elemcube[sangria]{-1}{-1}{1}
    \elemcube[gray]{1}{-1}{1}
    \colourcubeZface[sangria]{1}{-5}{4}
 \colourcubeZface[gray]{-1}{-5}{4}
\foreach \x/\y/\z in {-2/-5/6,-2/-5/7,1/-6/5,1/-5/5,1/-5/6,0/-5/6,-1/-5/6,0/-5/7,-1/-4/6}
{
\glasselemcube[forestgreen]{\x}{\y}{\z}{.8}
 }
 \glasselemcube[gray]{1}{-5}{6}{.9}
  \colourcubeZface[sangria]{-1}{1}{0}
 \colourcubeZface[sangria]{-1}{-3}{4}
 \elemcube[gray]{1}{1}{0}
 \elemcube[gray]{1}{-2}{3}
 \elemcube[gray]{1}{-3}{4}
\end{tikzpicture}
\qquad \qquad
     \begin{tikzpicture}[scale=1]
        \foreach \x/\y in {1/5,2/5,3/5,4/5,4/4,5/1,5/2,5/3,5/4,5/5} {
            \path [draw=brown!, fill=brown!30] (.5+\x-0.45, .5+\y-1.45)
            -- ++(0,.9)
            -- ++(.9,0)
            -- ++(0,-.9)
            --cycle;
        }
        \foreach \x/\y in {1/5,3/5,5/1,5/3,5/5} {
            \node at (.5+\x, \y-.5) {\rook};
        }
          \foreach \x/\y in {2/7,4/5,5/4} {
          \path [draw=gray!, fill=gray!30] (.5+\x-0.45, .5+\y-1.45)
            -- ++(0,.9)
            -- ++(.9,0)
            -- ++(0,-.9)
            --cycle;
            \node[anchor=west] at (.5+\x-0.4, \y-.5) {\textbf{T}};
        }
          \foreach \x/\y in {5/2,4/4,2/5} {
          \path [draw=sangria!, fill=sangria!30] (.5+\x-0.45, .5+\y-1.45)
            -- ++(0,.9)
            -- ++(.9,0)
            -- ++(0,-.9)
            --cycle;
            \node[anchor=west] at (.5+\x-0.4, \y-.5) {\textbf{F}};
        }
        \foreach \x/\y in {1/6,2/6,2/8,3/7} {
          \path [draw=gray!, fill=forestgreen!30] (.5+\x-0.45, .5+\y-1.45)
            -- ++(0,.9)
            -- ++(.9,0)
            -- ++(0,-.9)
            --cycle;
        }
    \end{tikzpicture}
    \caption{A close-up of how to properly place the clause gadget. It is important to not put the clause gadget on top of a square coming from a connection gadget. The right view shows the side view and the reason for this: two T or two F cubes will be in the same column if the clause gadget was put on the top T cube.}
    \label{fig:problem_clause_rook}
\end{figure}

We recapitulate the construction. First, we take the instance $C$. We transform it into a visibility representation and then into a path grid. Since the highest height needed is 20, we enlarge this grid into a rectangular prism of height 20. Each clause is then enlarged: this also enlarges the connecting edge. The variables are then enlarged, and finally the edge connections are enlarged as needed. The polycube is then finally constructed starting from the variables placed at height 10 (in the middle of the prism) and propagating the signal to the clauses via connection gadgets going above or under, and duplicating gadgets. The clause gadgets are then put on the propagated literals on the free T or F cubes after placing them parallel and at the same height. This concludes the construction of the polycube $P_3^R(C)$.

 Since $C$ is an instance of $\psat$, there are $n$ variables,  $3n$ edges and maximally $9n$ clauses. The enlargement we did still keep the size of $P_3^R(C)$ polynomially bounded. Finally, this process is polynomial in time, since transferring the instance $C$ into a visibility representation and then into a path grid representation is doable in polynomial time~\cite{Tamassia86,BBNNPR13}. Indeed, the extra steps needed to place the actual polycube are doable in polynomial time by first propagating the signal starting from the variables to all the clauses and then following the described procedures at each clause.  This concludes the proof.
\end{proof}

\begin{remark}\label{rem:rook_improv}
    It is possible to reduce the size of the polycube constructed. For example, we can replace long connection edges by longer clauses.
\end{remark}
The process of Proposition~\ref{prop:psat-to-polyc-rooks} is exemplified (with the simplification of the previous remark) in Figure~\ref{fig:ex_polycube_rooks} by constructing the polycube $P_3^R(C)$ associated with the instance $C$ of Figure~\ref{fig:ex_psat} (compare its grid path graph in Figure~\ref{fig:path_graph-psat}).  The following lemma will prove a key property of the polycube $P_3^R(C)$.

\begin{figure}[h]
    \centering
        \begin{tikzpicture}[scale=.25]
    \foreach \x/\y in {
        2/1,3/1,4/1,5/1,6/1,7/1,8/1,
        3/2,
        3/3,4/3,
        3/5,4/5,
        3/6,
        2/7,3/7,4/7,5/7,6/7,
        5/9,6/9,
        5/10,
        2/11,3/11,4/11,5/11,6/11,7/11,8/11
        } {
                \foreach \z/\w in {0/0,-1/1,0/1,1/1,-2/2,-1/2,1/2,2/2,-1/3,0/3,1/3,0/4} {
                \path [draw=brown!, fill=brown!30] (.5+4*\x+\z-0.45, .5+4*\y+\w-1.45)
				   -- ++(0,.9)
				   -- ++(.9,0)
				   -- ++(0,-.9)
				   --cycle;
       }
       }
     \foreach \x/\y in {
        1/1,1/7, 1/11
        } {
                \foreach \z/\w in {0/0,-1/1,0/1,1/1,-2/2,-1/2,1/2,2/2,-1/3,0/3,1/3,0/4} {
                \path [draw=g-green!, fill=g-green!50] (.5+4*\x+\z-0.45, .5+4*\y+\w-1.45)
				   -- ++(0,.9)
				   -- ++(.9,0)
				   -- ++(0,-.9)
				   --cycle;
       }
       }
       \foreach \x/\y in {
        5/3,5/5
        } {
                \foreach \z/\w in {0/0,-1/1,0/1,1/1,-2/2,-1/2,1/2,2/2,-1/3,0/3,1/3,0/4} {
                \path [draw=le-pink!, fill=le-pink!50] (.5+4*\x+\z-0.45, .5+4*\y+\w-1.45)
				   -- ++(0,.9)
				   -- ++(.9,0)
				   -- ++(0,-.9)
				   --cycle;
       }
       }
       \foreach \x/\y in {
        7/7,7/9
        } {
                \foreach \z/\w in {0/0,-1/1,0/1,1/1,-2/2,-1/2,1/2,2/2,-1/3,0/3,1/3,0/4} {
                \path [draw=le-orange!, fill=le-orange!50] (.5+4*\x+\z-0.45, .5+4*\y+\w-1.45)
				   -- ++(0,.9)
				   -- ++(.9,0)
				   -- ++(0,-.9)
				   --cycle;
       }
       }
       \foreach \x/\y in {
        9/1,9/11
        } {
                \foreach \z/\w in {0/0,-1/1,0/1,1/1,-2/2,-1/2,1/2,2/2,-1/3,0/3,1/3,0/4} {
                \path [draw=le-purple!, fill=le-purple!50] (.5+4*\x+\z-0.45, .5+4*\y+\w-1.45)
				   -- ++(0,.9)
				   -- ++(.9,0)
				   -- ++(0,-.9)
				   --cycle;
       }
       }
       \foreach \x/\y in {1/2,1/3,1/4,1/5,1/6,1/8,1/9,1/10 
       }
       {\foreach \z/\w in {-1/1,0/1,-1/2,-1/3,0/3,-1/4} {
                \path [draw=gray!, fill=gray!30, opacity = .5] (.5+4*\x+\z-0.45, .5+4*\y+\w-1.45)
				   -- ++(0,.9)
				   -- ++(.9,0)
				   -- ++(0,-.9)
				   --cycle;
       }}
       \foreach \x/\y in {
       5/4, 
       7/8,
       9/2,9/3,9/4,9/5,9/6,9/7,9/8,9/9,9/10 
       }
       {\foreach \z/\w in {-1/1,-2/1,-1/2,-1/3,-2/3,-1/4} {
                \path [draw=gray!, fill=gray!30, opacity = .5] (.5+4*\x+2+\z-0.45, .5+4*\y+\w-1.45)
				   -- ++(0,.9)
				   -- ++(.9,0)
				   -- ++(0,-.9)
				   --cycle;
       }}
       \foreach \x/\y/\z/\w in {1/1/-1/4, 1/1/-2/3, 1/1/-1/3,
       1/7/-1/1, 1/7/-0/1, 1/7/-1/2, 1/7/-1/3, 1/7/-2/3, 1/7/-1/4,
       1/11/-1/1, 1/11/0/1,1/11/-1/2,1/11/-1/3,1/11/-2/3
       } {
                \path [draw=gray!, fill=gray!30, opacity = .5] (.5+4*\x+\z-0.45, .5+4*\y+\w-1.45)
				   -- ++(0,.9)
				   -- ++(.9,0)
				   -- ++(0,-.9)
				   --cycle;
       }
       \foreach \x/\y/\z/\w in {9/1/1/4, 9/1/1/3, 9/1/2/3,
       9/11/1/1,9/11/2/1,
       5/3/1/3, 5/3/2/3, 5/3/1/4,
       5/5/1/1, 5/5/0/1, 5/5/1/2, 5/5/1/3, 5/5/2/3,
       7/7/1/3, 7/7/2/3, 7/7/1/4,
       7/9/1/1, 7/9/0/1, 7/9/1/2, 7/9/1/3, 7/9/2/3
       } {
                \path [draw=gray!, fill=gray!30, opacity = .5] (.5+4*\x+\z-0.45, .5+4*\y+\w-1.45)
				   -- ++(0,.9)
				   -- ++(.9,0)
				   -- ++(0,-.9)
				   --cycle;
       }
       \draw (0,4*1+1) node {$x_3$};
       \draw (0,4*7+1) node {$x_2$};
       \draw (0,4*11+1) node {$x_1$};
       \draw (4*10+1,4*7+1) node[right,rotate=270] {$x_3  \lor \overline x_1$};
       \draw (4*6,4*4) node[rotate =270] {$\overline x_2 \lor \overline x_1$};
       \draw (4*9-3,4*8+1) node[rotate=270] {$  \overline x_3\lor \overline{x}_2$};
       \draw [decorate, decoration = {brace}] (-2,2) --  (-2,4*11+2);
       \draw (-6,4*7) node[rotate=90] {$x_1\lor x_2 \lor x_3$};
    \end{tikzpicture}
    \hfill \includegraphics[width=.5\textwidth]{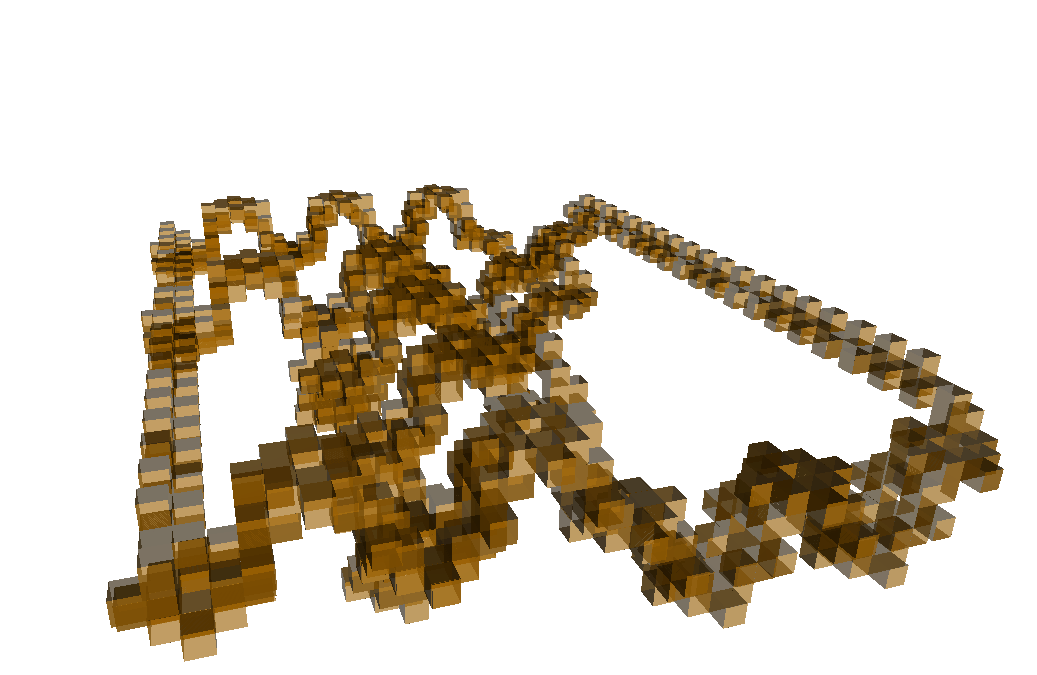}
    \caption{Left) Top view of the  polycube $P_3^R(C)$ given from Proposition~\ref{prop:psat-to-polyc-rooks} (with some simplifications) with the instance $C$ of Figure~\ref{fig:ex_psat} as input. In green, the three variables. In the remaining colours, the literals of the clauses. The brown tiles are the top view of the connection construction. Right) A 3D rendering of the model.}
    \label{fig:ex_polycube_rooks}
\end{figure}


\begin{lemma}\label{lem:equivalent-rooks}
Let $C$ be an instance of $\psat$, and let $P_3^R(C)$ be its associated polycube. Let
\begin{equation}
m_R := 6 n_{var} + 6 n_{connect} + \sum_{c \in\{\text{clauses of }C\}} \ell_R(c),
\end{equation}
where $n_{var}$ is the number of variables, $n_{connect}$ is the total number of connection gadgets, $\ell_R(c)$ is the length of the clause $c$, and $n_{clause}$ is the total number of clauses. There exists a non-attacking rook set of size $m_R + n_{clause}$ for the polycube $P_3^R(C)$ if and only if $C$ is satisfiable.
\end{lemma}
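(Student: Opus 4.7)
The plan is to establish the biconditional by showing that the maximum non-attacking rook count on $P_3^R(C)$ equals $m_R + n_{clause}$ precisely when $C$ is satisfiable. I localise the rook placement to each gadget and sum the bounds from Lemmas~\ref{lem:variable-gadget-rooks}, \ref{lem:connection-gadget-rooks}, and~\ref{lem:clause-gadget-rooks}: each variable gadget admits at most $6$ non-attacking rooks, each connection gadget at most $6$, and each clause gadget $K(c)$ at most $\ell_R(c)+1$, with the extra rook available only when $c$ evaluates to \true. Summing these local maxima over all gadgets gives $m_R + n_{clause}$ as an upper bound on the global rook count, attained exactly when every clause is satisfied.

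For the forward direction, starting from a satisfying assignment of $C$, I place $6$ rooks on every variable gadget in the \textbf{T} or \textbf{F} configuration of Lemma~\ref{lem:variable-gadget-rooks} matching the truth value of the variable. I then fill each connection gadget with the maximal $6$ rooks of Lemma~\ref{lem:connection-gadget-rooks}, orienting it so that the interface cubes it shares with its neighbours present the correct \textbf{T}/\textbf{F} tiles and so transmit the truth value faithfully along the chain. Because every clause is satisfied, Lemma~\ref{lem:clause-gadget-rooks} then allows me to place $\ell_R(c)+1$ rooks in each clause gadget $K(c)$, contributing a further $n_{clause}$ rooks beyond the base $\sum_c \ell_R(c)$, for a grand total of $m_R + n_{clause}$.

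For the converse, suppose $P_3^R(C)$ admits a non-attacking rook set of size $m_R + n_{clause}$. Since the sum of local maxima over the gadgets also equals $m_R + n_{clause}$, every gadget must achieve its local maximum. By Lemma~\ref{lem:variable-gadget-rooks} each variable gadget is then in one of its two canonical configurations, yielding a Boolean assignment, and by Lemma~\ref{lem:connection-gadget-rooks} each connection gadget transmits that assignment consistently to the clause gadgets above. By Lemma~\ref{lem:clause-gadget-rooks} each clause gadget contains $\ell_R(c)+1$ rooks, which forces the associated clause to evaluate to \true{}, so the assignment satisfies $C$.

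The main technical point is justifying this decomposition: one must verify that the global non-attacking constraint genuinely reduces to the local bounds per gadget, i.e.\ that no rook in one gadget shares a row, column, or vertical line in $\mathbb{Z}^3$ with a rook in a non-adjacent gadget, and that a rook on a shared interface tile is counted consistently so that both adjacent gadgets can reach their local maxima simultaneously. These alignment properties follow from the explicit geometry built in Proposition~\ref{prop:psat-to-polyc-rooks}: the variable gadgets are well-separated in the plane, the connection gadgets rise in thin vertical columns whose rook attack lines exit the polycube before reaching another gadget, and the clause gadgets live in the upper layers running transversally to the variable rows. Once these separation facts are checked gadget by gadget, the counting argument above closes the equivalence.
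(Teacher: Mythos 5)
Your proposal is correct and follows essentially the same route as the paper's proof: both directions are handled by summing the per-gadget maxima from Lemmas~\ref{lem:variable-gadget-rooks}, \ref{lem:connection-gadget-rooks} and~\ref{lem:clause-gadget-rooks}, with the $n_{clause}$ extra rooks available exactly when every clause gadget carries a satisfied clause. Your closing paragraph makes explicit the gadget-separation argument that the paper leaves implicit, which is a welcome addition but not a different method.
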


\begin{proof}
    Consider a truth assignment that satisfies all the clauses in $C$. We transfer the assignment on the polycube $P_3^R(C)$ of Proposition~\ref{prop:psat-to-polyc-rooks}. Each variable can have at most 6 rooks placed on them regardless of their value, so there are 6 rooks per variable gadget.

    We connect the variable gadgets to the clause gadgets. Each of them have a maximum number of 6 rooks regardless of the signal they transfer, so there can be 6 rooks per connection gadget.

    Since it is a truth assignment, all clauses are satisfied.  Lemma~\ref{lem:clause-gadget-rooks} implies that  one additional non-attacking rook per clause can be placed.

    The total number of non-attacking rooks that can be placed on $P_3^R(C)$ is then $m_R + n_{clause}$, proving that an instance $C$ is transformed into an independent rook domination problem $(P_3^R(C),m_R+n_{clause})_3^R$.

    If a set of $m_R + n_{clause}$ non-attacking rooks dominating $P_3^R(C)$ exists,  and since it is not possible to increase the number of rooks in either the variable or in the connection gadgets, there must be $\ell_R(c)+1$ rooks placed on each clause $c$ gadget. From Lemma~\ref{lem:clause-gadget-rooks}, it follows that all the corresponding clauses evaluate to \true, meaning that $C$ is satisfied.
\end{proof}

We can now prove Theorem~\ref{thm:nphard-rooks}.
\begin{proof}[Proof of Theorem~\ref{thm:nphard-rooks}]
Lemma~\ref{lem:verifP} implies that the problem is in the class NP since verifying a solution can be done in polynomial time.

Let $C$ be an instance of $\psat$. We obtain a polynomially-sized polycube $P_3^R(C)$ in polynomial time by Proposition~\ref{prop:psat-to-polyc-rooks}. Lemma~\ref{lem:equivalent-rooks} implies that the non-attacking rook set problem for $(P_3^R(C), m_R+n_{clause})_3^R$ is equivalent to the $\psat$ problem for $C$, which is NP-complete by Proposition~\ref{prop:psatNP}, and so the non-attacking rook set problem for $3$--polycube is NP-complete. As $3$--polycubes are contained in $d$--polycubes for $d\geq 3$, the non-attacking rook set problem for $d$--polycubes is also NP-complete.
\end{proof}

We add a last note before closing the section. Two different instances can lead to the same polycube. For example, the instance constructed from a given instance by switching every literal $x$ to $\overline{x}$ and every literal $\overline{x}$ to $x$ in the clauses will yield the same polycube: it amounts to interchanging the T and F cubes. So, we should expect two solutions for each satisfying choice of boolean variables satisfying the clauses.

\subsection{NP-completion of maximum independent domination of queens on polycubes}

We now turn to the proof of Theorem~\ref{thm:nphard-queens}. It will proceed in a similar fashion as the previous section, with different gadgets. The first component of the proof is the variable gadget obtained as a concatenation of four smaller elements identical to the rook variable gadgets; see Figure~\ref{fig:variable-gadget-queens_3D_construction}.

\begin{figure}[h]
\begin{subfigure}{.3\textwidth}
    \centering
    \begin{tikzpicture}[x=(200:0.5cm), y=(-40:0.5cm), z=(90:0.353cm)]
		  \foreach \x/\y in {1/2, 2/1, 2/2, 2/3,3/0, 3/1, 3/3, 3/4, 4/1, 4/2, 4/3, 5/2}
			{
			\elemcube{\x-\inc}{\y+\inc}{0}
			}

		\foreach \x/\y in {2/2, 3/0, 4/2, 3/4}
			{
				\colourcubeZface[black]{\x-\inc}{\y+\inc}{0}
			}
		\foreach \x/\y in {2/2, 3/0, 4/2, 3/4}
			{
				\cubelabel{\x-\inc}{\y+\inc}{0}{\textbf{T}}
			}
		\foreach \x/\y in {1/2, 3/1, 3/3, 5/2}
			{
				\colourcubeZface[sangria]{\x-\inc}{\y+\inc}{0}
			}
		\foreach \x/\y in {1/2, 3/1, 3/3, 5/2}
			{
				\cubelabel{\x-\inc}{\y+\inc}{0}{\textbf{F}}
			}
		\end{tikzpicture}
	   \caption{One element.}
	   \label{fig:element-queens}
\end{subfigure}
	\begin{subfigure}{.69\textwidth}
	    \centering
	\begin{tikzpicture}[x=(200:0.5cm), y=(-40:0.5cm), z=(90:0.353cm)]
		  \foreach \x/\y in {1/2, 1/6, 2/1, 2/2, 2/3, 2/5, 2/6, 2/7, 3/0, 3/1, 3/3, 3/4, 3/5, 3/7, 3/8, 4/1, 4/2, 4/3, 4/5, 4/6, 4/7, 5/2, 5/6, 6/1, 6/2, 6/3, 6/5, 6/6, 6/7, 7/0, 7/1, 7/3, 7/4, 7/5, 7/7, 7/8, 8/1, 8/2, 8/3, 8/5, 8/6, 8/7, 9/2, 9/6}
			{
			\elemcube{\x-\inc}{\y+\inc}{0}
			}
		\foreach \x/\y in {1/6, 2/2, 3/0, 4/2, 7/1, 7/3, 3/5, 3/7, 6/6, 8/6, 9/2, 7/8}
			{
				\colourcubeZface[black]{\x-\inc}{\y+\inc}{0}
			}
		\foreach \x/\y in {1/6, 2/2, 3/0, 4/2, 7/1, 7/3, 3/5, 3/7, 6/6, 8/6, 9/2, 7/8}
			{
				\cubelabel{\x-\inc}{\y+\inc}{0}{\textbf{T}}
			}
		\foreach \x/\y in {1/2, 3/1, 3/3, 6/2, 7/0, 8/2, 2/6, 4/6, 3/8, 7/5, 7/7, 9/6}
			{
				\colourcubeZface[sangria]{\x-\inc}{\y+\inc}{0}
			}
		\foreach \x/\y in {1/2, 3/1, 3/3, 6/2, 7/0, 8/2, 2/6, 4/6, 3/8, 7/5, 7/7, 9/6}
			{
				\cubelabel{\x-\inc}{\y+\inc}{0}{\textbf{F}}
			}
		\end{tikzpicture}
	   \caption{
  Variable gadget for 3D queens from four elements.}
	   \label{fig:variable-gadget-queens_3D}
	\end{subfigure}
 \caption{Construction of the variable 
 gadget for 3D queens. Maximally 12 queens can be placed in two different ways.}\label{fig:variable-gadget-queens_3D_construction}
\end{figure}

\begin{lemma}\label{lem:literal-gadget-queens_3D}
    The gadget of Figure~\ref{fig:variable-gadget-queens_3D} has two states of maximum domination by 12 queens.
\end{lemma}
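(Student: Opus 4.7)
The plan is to analyze the literal gadget as four copies of the element from Figure~\ref{fig:element-queens} glued along the shared tip cubes $(5,2),(3,4),(5,6),(7,4)$, combining an element-level case analysis with a global consistency argument.

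First, I would show that a single element admits at most four pairwise non-attacking queens that dominate it, and the only such 4-queen configurations are the $T$-diamond $\{(2,2),(3,0),(4,2),(3,4)\}$ and the $F$-diamond $\{(1,2),(3,1),(3,3),(5,2)\}$. The element decomposes into a centre hole at $(3,2)$, an inner diamond of Chebyshev-radius one, four outer tips, and four intermediate side cubes; the mutual-attack graph on these 12 cubes is small enough to enumerate exhaustively, using that opposite outer tips attack diagonally through the two adjacent side cubes, each inner-diamond cube attacks its two side neighbours and its two inner neighbours, and every side cube attacks the whole inner diamond. This rules out all 5-queen placements and isolates the two stated extrema.

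Second, I would verify by direct inspection that the $T$-labelling (placing queens on the twelve $\textbf{T}$-cubes) and the $F$-labelling each yield a non-attacking dominating set of 12 queens. Non-attack holds because every row, column, or diagonal between two labelled cubes is interrupted by a centre hole of some element or by a two-cube gap between opposite tips; the unlabelled shared tips are dominated by queens sitting one step away in the adjacent column or row of a neighbouring element.

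Third, I would establish that 12 is the maximum and that the $T$- and $F$-labellings are the only configurations achieving it. Summing the element-level bound of four across the four elements with the four shared tips double-counted gives $4\cdot 4 - 4 = 12$, reached only when every shared tip hosts a queen common to two elements. The crucial step is to rule this out: a shared-tip queen launches a length-four diagonal traversing an adjacent element --- for example, $(3,4)$ attacks $(4,5),(5,6),(6,7),(7,8)$ along the NE diagonal, and $(5,2)$ attacks $(6,3),(7,4),(8,5),(9,6)$ --- and these attacks forbid the full 4-queen state of the neighbour, cutting the total below twelve. Hence every 12-queen configuration leaves all shared tips empty, so each element must contribute exactly three queens, namely its reduced $T$- or reduced $F$-state; consistency around the 4-cycle of adjacent elements $1{-}2{-}4{-}3{-}1$ then forces the global choice to be either $T$ or $F$. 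The main obstacle is this long-diagonal analysis in the third step: each shared-tip queen eliminates several non-obvious positions on the opposite side of the gadget, and the cycle-wise propagation of these constraints is what converts the element-level bound into the tight global bound of twelve.
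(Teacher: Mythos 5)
Your overall strategy (decompose the gadget into four copies of the element, bound each element at four queens with the two diamonds as the only extremal states, then control the interaction through the shared tips) is far more ambitious than the paper's own proof, which simply observes that the gadget is two-dimensional and delegates the entire claim to the exhaustive ILP solver of Tool~\ref{tool:gadget-check}, as in the rook case. Your steps one and two are sound. The gap is in step three. First, the count $4\cdot 4-4=12$ is not an upper-bound argument: if $k$ shared tips are occupied, the inclusion--exclusion count is $\sum_i q_i-k\le 16-k$, which equals $12$ only when $k=4$ and exceeds $12$ otherwise, so nothing forces ``every shared tip hosts a queen,'' and the subsequent deduction inverts the logic. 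Second, and more seriously, the claim that a shared-tip queen ``forbids the full 4-queen state of the neighbour'' is false: each shared tip belongs to one of the two diamonds of \emph{each} of its two elements, so a queen there is compatible with both neighbours sitting in those diamonds. Concretely, the seven queens $(1,2),(5,2),(3,1),(3,3)$ and $(5,2),(9,2),(7,1),(7,3)$ --- elements $1$ and $2$ both in their local $F$-diamonds, sharing $(5,2)$ --- are pairwise non-attacking, and your scheme then only yields a bound of $4+4-1+3+3=13$ for the whole gadget. What actually rescues the bound is that the long diagonals out of $(1,2)$, $(5,2)$, $(9,2)$ together with the columns out of $(3,3)$ and $(7,3)$ leave only four admissible cells in each of elements $3$ and $4$ (namely $(2,6),(2,7),(3,7),(4,6)$ and $(6,6),(7,7),(8,6),(8,7)$), on which at most two non-attacking queens fit, so this branch tops out at $11$. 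That computation, its analogues for the other compatible diamond pairs, and the case of a single element in a diamond state (which your dichotomy skips entirely) constitute the real content of the upper bound and are missing.

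A related gap concerns uniqueness. Even granting that all shared tips are empty and each element contributes exactly three queens, those three need not form a ``reduced $T$'' or ``reduced $F$'' state: for instance $\{(3,0),(2,3),(4,2)\}$ is a non-attacking dominating triple in element $1$ that is neither, and excluding such hybrids again requires tracking the long diagonals they emit into the far elements (here $(2,3)$ attacks $(3,4),(4,5),(5,6),(6,7),(7,8)$). So the cycle-consistency argument does not yet pin down the two labelled states. None of this is unfixable --- the element-level analysis is correct and the interactions are finite --- but as written the proof does not close; you would need to carry out the full cross-element case analysis, or certify the gadget computationally as the paper does.
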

\begin{proof}
    We remark that the variable gadget, albeit made from 3-dimensional cubes, is inherently 2-dimensional. Hence, we check that there are only two placements of 12 queens by the solver~\cite{GadgetCheck} introduced in Tool~\ref{tool:gadget-check} and that this is the maximum. Alternatively, we can proceed by brute force, since the gadget is relatively small.
\end{proof}

Contrary to the rook gadgets, the number of queens will not remain constant on the connected variable gadget. This means that we must carefully analyze the propagation of the signal via the connection gadget presented in Figure~\ref{fig:connection-gadget-queens-3D}.

 \begin{figure}[h]
		\centering
		\begin{tikzpicture}[scale=1]

      \foreach \x/\y in {
        -1/0,-1/4,3/0,3/4,
        7/0,7/4,7/8,11/0,11/4,11/8,
        15/0,15/4
        } {
                \foreach \z/\w in {0/0,-1/1,0/1,1/1,-2/2,-1/2,1/2,2/2,-1/3,0/3,1/3,0/4} {
                \path [draw=brown!, fill=brown!30] (.5+\x+\z-0.45, .5+\y+\w-1.45)
				   -- ++(0,.9)
				   -- ++(.9,0)
				   -- ++(0,-.9)
				   --cycle;
       }
       }
				\foreach \x/\y in {-2/6, -1/8, -1/3, -1/1, -3/2, 0/6, 2/2, 3/0, 4/2, 7/1, 7/3, 3/5, 3/7, 6/6, 8/6, 10/2, 12/2, 11/0, 7/9, 7/11, 5/10, 11/5, 11/7, 10/10, 12/10, 11/12, 14/6, 16/6, 15/8, 15/3, 15/1, 17/2} {
				 \path [draw=gray!, fill=gray!30] (.5+\x-0.45, .5+\y-1.45)
				   -- ++(0,.9)
				   -- ++(.9,0)
				   -- ++(0,-.9)
				   --cycle;
				   \node[anchor=west] at (.5+\x-0.4, \y-.5) {\textbf{T}};
			   }
				 \foreach \x/\y in {-2/2,-1/0, -1/5, -1/7, -3/6, 0/2, 3/1, 3/3, 6/2, 7/0, 8/2, 2/6, 4/6, 3/8, 7/5, 7/7, 10/6, 12/6, 11/1, 11/3, 6/10, 8/10, 7/12, 11/9, 11/11, 13/10, 14/2, 15/0, 16/2, 15/5, 15/7, 17/6} {
				 \path [draw=sangria!, fill=sangria!30] (.5+\x-0.45, .5+\y-1.45)
				   -- ++(0,.9)
				   -- ++(.9,0)
				   -- ++(0,-.9)
				   --cycle;

				   \node[anchor=west] at (.5+\x-0.4, \y-.5) {\textbf{F}};
			   }
			   \draw (9.5,12.5) node[align=center]{$\uparrow$};
			   \draw (18.5,3.5) node[align=center]{$\rightarrow$};
			   \draw (1.5,3.5) node[align=center]{Lit.};
      \draw[thick,dotted, teal]
      (6, -1 -.2) -- (6,9-.15) -- (5-.2,9-.15) -- (5-.2,12+.1) -- (18+.2,12+.1) -- (18+.2,-1-.2) -- cycle;
      \draw[thick,orange]
      (6-.1,-1-.2) -- (6-.1,8+.1) --(-3-.1,8+.1) --(-3-.1,-1-.2) -- cycle;
      \draw[thick,dashed]
      ( (6-.1,8+.15) -- (14-.1,8+.15)--(14,4-.1)--(6-.1,4-.1)--cycle;
		   \end{tikzpicture}
		   \caption{Top view of the connection gadget (in the dotted teal box) propagating the signal of a variable gadget (in the full orange box). There is one pair of elements, the two in a dashed black box, that  have each four elements as neighbors.}\label{fig:connection-gadget-queens-3D}
	\end{figure}

\begin{lemma}\label{lem:propagation-gadget-queens-3D}
The signal of the variable gadget is propagated by the connection gadget of Figure~\ref{fig:connection-gadget-queens-3D}. The number of queens needed to guard the gadget is given by $4n_{4neigh} + 5n_{3neigh} + 6n_{2neigh}$, where $n_{ineigh}$ is the number of pairs of tiles with $i$ neighbors. Furthermore, the connection gadget can be used to turn corners.
\end{lemma}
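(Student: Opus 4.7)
The plan is to analyse the connection gadget by decomposing it into the basic ``element'' units of Figure~\ref{fig:element-queens} and tracking how many neighbouring units each one has. First, I would establish a local counting lemma: for a single pair of diamond-shaped elements, the maximum independent queen placement required to dominate its tiles is completely determined by how many adjacent pairs share cubes with it. Concretely, I would verify---either by invoking the gadget-check tool (Tool~\ref{tool:gadget-check}), since every local configuration is two-dimensional, or by a direct case analysis mirroring the argument in Lemma~\ref{lem:literal-gadget-queens_3D}---that an endpoint pair with two neighbours needs $6$ queens, a middle pair with three neighbours needs $5$, and a junction pair with four neighbours needs $4$. Summing these counts over all pairs yields the stated formula $4n_{4neigh}+5n_{3neigh}+6n_{2neigh}$.

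Next, I would turn to signal propagation. The key observation is that in each element, a maximal queen set is forced (as in Lemma~\ref{lem:literal-gadget-queens_3D}) onto either all T-cubes or all F-cubes. Between two adjacent pairs, the shared cubes are labelled T in one pair and F in the other (visible in Figure~\ref{fig:connection-gadget-queens-3D} by the alternation of the T/F markings along the chain). This forces the T/F state of consecutive pairs to agree as a coherent Boolean value: if a pair is in state \true{}, its neighbour must also be in state \true{}, or else one of the two fails to reach the maximum local count established in the first step. Iterating along the chain of pairs from the variable gadget out to the clause gadget gives an unambiguous propagation of the truth value.

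For the corner-turning and branching statement, I would use the $4$-neighbour pair (the dashed region in Figure~\ref{fig:connection-gadget-queens-3D}) as a hub where up to four chains of pairs meet. The same rigidity argument shows that the single choice of T/F at the hub is forced to be consistent across all emanating branches, so one can re-route a branch into a new direction (turning a corner) or split it into multiple branches (duplicating the signal to several clauses) while preserving the propagated value. Geometric reorientations of the branches in space do not interact with queen lines in other branches because the gadget is embedded in $\mathbb{Z}^3$ with enough vertical clearance, a fact already exploited in the rook case (Lemma~\ref{lem:connection-gadget-rooks}).

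The main obstacle will be the second step: showing that the T/F choice in one pair \emph{rigidly} determines that of its neighbour, with no room for a mixed or partial configuration that still attains the local maximum. This amounts to a finite but delicate case analysis, because queens, unlike rooks, attack along diagonals and one must rule out placements that combine cubes of both T and F type while still dominating the element. I expect to dispatch this by the solver-assisted verification of each element's independent queen structure, which certifies the rigidity lemma and thereby makes the propagation and corner-turning arguments formal.
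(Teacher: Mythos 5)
Your proposal follows essentially the same route as the paper: decompose the connection gadget into pairs of elements, classify each pair by its number of neighbours, and establish by local analysis (solver-assisted, since each configuration is planar) that pairs with $2$, $3$, $4$ neighbours admit maximally $6$, $5$, $4$ queens, summing to the stated formula. In fact you go further than the paper's own proof, which stops at the counting step and leaves the rigidity of the T/F propagation and the corner-turning claim as asserted consequences of the figure; your explicit treatment of those points is a welcome elaboration rather than a deviation.
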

\begin{proof}
We begin by counting the number of queens on the gadgets. We first notice that the basic elements come in pairs by construction. Both members of the pair can have either 2, 3, or 4 neighbors. Then, a local analysis shows that the pair will have maximally 6, 5, or 4 queens, respectively. The three situations can be seen in Figure~\ref{fig:connection-gadget-queens-3D}, where the one pair with 4 neighbors has been highlighted.
\end{proof}
Finally, to evaluate clauses, gadgets similar to those of Figures~\ref{fig:clause-2gadget-rooks} and~\ref{fig:clause-gadget-connection-rooks}
are used. Their construction proceeds as follows. Let $c$ be a clause with three literals (respectively two). Suppose there are three  variable gadgets, $x_1,x_2,x_3$ (respectively two, $x_1,x_2$) on a line. Then the extremal nodes form a sequence of red (\false) and gray (\true) tiles. If the variable $x_i$ appears as $x_i$ in $C$, we place a connector \begin{tikzpicture}[x=(200:0.5cm), y=(-40:0.5cm), z=(90:0.353cm),baseline= {(current bounding box.center)}, scale = .25]
\foreach \x/\y/\z in {0/-2/0,0/-1/0, 0/0/0,0/0/1}
{\elemcube[forestgreen]{\x}{\y}{\z}}
\end{tikzpicture}
on the red (F) tile, and if it appears as $\overline{x}_i$ in $C$, we place the connector on the gray (T) tile. We then join the connector by alternating top
\begin{tikzpicture}[x=(200:0.5cm), y=(-40:0.5cm), z=(90:0.353cm),baseline= {(current bounding box.center)}, scale = .25]
\foreach \x/\y/\z in {0/0/1,0/0/2,0/0/3}
{\elemcube[forestgreen]{\x}{\y}{\z}}
\end{tikzpicture}
and left
\begin{tikzpicture}[x=(200:0.5cm), y=(-40:0.5cm), z=(90:0.353cm),baseline= {(current bounding box.center)}, scale = .25]
\foreach \x/\y/\z in {0/0/1,0/1/1,0/2/1}
{\elemcube[forestgreen]{\x}{\y}{\z}}
\end{tikzpicture}
nodes. They form a clause gadget---see Figure~\ref{fig:clause-2gadget-connection-queens-3D}. The length of the clause gadget is denoted as $\ell_Q(c)$ and is defined similarly as the length $\ell_R$ of the rook gadget in~\eqref{eq:f_clause}. The process is illustrated in Figure~\ref{fig:procedure_clause}.
%
%
\begin{figure}[h]
\[
\begin{tikzpicture}[scale = .25,baseline= {(current bounding box.center)}]
 \foreach \x/\y in {
        0/5,0/6,1/5,1/6,3/5,3/6,4/5,4/6,6/5,6/6,7/5,7/6,6/2,6/3,7/2,7/3
        } {
                \foreach \z/\w in {0/0,-1/1,0/1,1/1,-2/2,-1/2,1/2,2/2,-1/3,0/3,1/3,0/4} {
                \path [draw=brown!, fill=brown!30] (.5+4*\x+\z-0.45, .5+4*\y+\w-1.45)
				   -- ++(0,.9)
				   -- ++(.9,0)
				   -- ++(0,-.9)
				   --cycle;
       }
       }
\foreach \x/\y in { 0/6,3/6,6/6, 6/3}
{\foreach \z/\w in {0/4} {
                \path [draw=red!, fill=red!50] (.5+4*\x+\z-0.45, .5+4*\y+\w-1.45)
				   -- ++(0,.9)
				   -- ++(.9,0)
				   -- ++(0,-.9)
				   --cycle;
}}
\foreach \x/\y in { 1/6,4/6,7/6,7/3}
{\foreach \z/\w in {0/4} {
                \path [draw=gray!, fill=gray!50] (.5+4*\x+\z-0.45, .5+4*\y+\w-1.45)
				   -- ++(0,.9)
				   -- ++(.9,0)
				   -- ++(0,-.9)
				   --cycle;
}}
\foreach \x/\y in { 7/6, 7/3}
{\foreach \z/\w in {2/2} {
                \path [draw=red!, fill=red!50] (.5+4*\x+\z-0.45, .5+4*\y+\w-1.45)
				   -- ++(0,.9)
				   -- ++(.9,0)
				   -- ++(0,-.9)
				   --cycle;
}}
\foreach \x/\y in { 7/5,7/2}
{\foreach \z/\w in {2/2} {
                \path [draw=gray!, fill=gray!50] (.5+4*\x+\z-0.45, .5+4*\y+\w-1.45)
				   -- ++(0,.9)
				   -- ++(.9,0)
				   -- ++(0,-.9)
				   --cycle;
}}
\end{tikzpicture} \xrightarrow{c_1 = x_1\lor x_2\lor x_3,\ c_2 = \overline{x}_3 \lor \overline{x}_4}
\begin{tikzpicture}[scale = .25,baseline= {(current bounding box.center)}]
 \foreach \x/\y in {
        0/5,0/6,1/5,1/6,3/5,3/6,4/5,4/6,6/5,6/6,7/5,7/6,6/2,6/3,7/2,7/3
        } {
                \foreach \z/\w in {0/0,-1/1,0/1,1/1,-2/2,-1/2,1/2,2/2,-1/3,0/3,1/3,0/4} {
                \path [draw=brown!, fill=brown!30] (.5+4*\x+\z-0.45, .5+4*\y+\w-1.45)
				   -- ++(0,.9)
				   -- ++(.9,0)
				   -- ++(0,-.9)
				   --cycle;
       }
       }
\foreach \x/\y in { 0/6,3/6,6/6, 6/3}
{\foreach \z/\w in {0/4} {
                \path [draw=red!, fill=red!50] (.5+4*\x+\z-0.45, .5+4*\y+\w-1.45)
				   -- ++(0,.9)
				   -- ++(.9,0)
				   -- ++(0,-.9)
				   --cycle;
}}
\foreach \x/\y in { 1/6,4/6,7/6,7/3}
{\foreach \z/\w in {0/4} {
                \path [draw=gray!, fill=gray!50] (.5+4*\x+\z-0.45, .5+4*\y+\w-1.45)
				   -- ++(0,.9)
				   -- ++(.9,0)
				   -- ++(0,-.9)
				   --cycle;
}}
\foreach \x/\y in { 7/6, 7/3}
{\foreach \z/\w in {2/2} {
                \path [draw=red!, fill=red!50] (.5+4*\x+\z-0.45, .5+4*\y+\w-1.45)
				   -- ++(0,.9)
				   -- ++(.9,0)
				   -- ++(0,-.9)
				   --cycle;
}}
\foreach \x/\y in { 7/5,7/2}
{\foreach \z/\w in {2/2} {
                \path [draw=gray!, fill=gray!50] (.5+4*\x+\z-0.45, .5+4*\y+\w-1.45)
				   -- ++(0,.9)
				   -- ++(.9,0)
				   -- ++(0,-.9)
				   --cycle;
}}
\foreach \x in {
        1,2,4,5
        } {
                \foreach \z/\w in {-2/4,-1/4,-1/3,-1/2,0/4,1/4,1/3,1/2} {
                \path [draw=teal!, fill=teal!50,opacity=.4] (.5+4*\x+\z-0.45, .5+4*6+\w-1.45)
				   -- ++(0,.9)
				   -- ++(.9,0)
				   -- ++(0,-.9)
				   --cycle;
       }
       }
\foreach \z/\w in {0/4,0/5,0/6,-2/4,-1/4,-1/3,-1/2} {
                \path [draw=teal!, fill=teal!50,opacity = .4] (.5+4*6+\z-0.45, .5+4*6+\w-1.45)
				   -- ++(0,.9)
				   -- ++(.9,0)
				   -- ++(0,-.9)
				   --cycle;
}
\foreach \z/\w in {0/4,0/5,0/6,-2/4,-1/4,-1/3,-1/2,1/4,1/3,1/2} {
                \path [draw=teal!, fill=teal!50,opacity = .4] (.5+4*3+\z-0.45, .5+4*6+\w-1.45)
				   -- ++(0,.9)
				   -- ++(.9,0)
				   -- ++(0,-.9)
				   --cycle;
}
\foreach \z/\w in {0/4,0/5,0/6,1/4,1/3,1/2} {
                \path [draw=teal!, fill=teal!50,opacity = .4] (.5+4*0+\z-0.45, .5+4*6+\w-1.45)
				   -- ++(0,.9)
				   -- ++(.9,0)
				   -- ++(0,-.9)
				   --cycle;
}
\foreach \y in {
        3,4
        } {
                \foreach \z/\w in {2/0,2/1,1/1,0/1,2/2,2/3,1/3,0/3} {
                \path [draw=teal!, fill=teal!50,opacity=.4] (.5+4*7+\z-0.45, .5+4*\y+\w-1.45)
				   -- ++(0,.9)
				   -- ++(.9,0)
				   -- ++(0,-.9)
				   --cycle;
       }
       }
\foreach \z/\w in {2/2,3/2,4/2,2/3,1/3,0/3} {
                \path [draw=teal!, fill=teal!50,opacity = .4] (.5+4*7+\z-0.45, .5+4*2+\w-1.45)
				   -- ++(0,.9)
				   -- ++(.9,0)
				   -- ++(0,-.9)
				   --cycle;
}
\foreach \z/\w in {2/2,3/2,4/2,2/1,1/1,0/1,2/0} {
                \path [draw=teal!, fill=teal!50,opacity = .4] (.5+4*7+\z-0.45, .5+4*5+\w-1.45)
				   -- ++(0,.9)
				   -- ++(.9,0)
				   -- ++(0,-.9)
				   --cycle;
}
\end{tikzpicture}
\]
\caption{Top view of the procedure to construct 2 clause gadgets on 4 variable gadgets, $x_1,x_2,x_3,x_4$. The clause gadgets are in teal and are on top of the brown polycubes.}\label{fig:procedure_clause}
\end{figure}

\begin{figure}[h]
	\begin{tikzpicture}[rotate = -50,x=(245:0.5cm), y=(-30:0.5cm), z=(90:0.353cm)
	]
	\foreach \x/\y/\z in {2/-2/1,2/-1/1,12/-2/1,12/-1/1,2/0/1}
	{\elemcube[forestgreen]{\x}{\y}{\z}}
	\foreach \x/\y/\z in {2/-2/1,12/-2/1}
		{
			\cubelabel{\x}{\y}{\z}{\queen}
		}
	\elemcube[black]{2}{0}{2}
	\colourcubeZface[black]{2}{0}{2}
	\cubelabel{2}{0}{2}{\textbf{T}}
	\foreach \x/\y/\z in {3/0/2,3/0/3,3/0/4,4/0/2,4/1/2,4/2/2,5/0/2,5/0/3,5/0/4,6/0/2,6/1/2,6/2/2,7/0/2,7/0/3,7/0/4,8/0/2,8/1/2,8/2/2,9/0/2,9/0/3,9/0/4,10/0/2,10/1/2,10/2/2,11/0/2,11/0/3,11/0/4,12/0/1 }
	{\elemcube[forestgreen]{\x}{\y}{\z}}
	\elemcube[black]{12}{0}{2}
	\colourcubeZface[black]{12}{0}{2}
	\cubelabel{12}{0}{2}{\textbf{T}}

	\foreach \x/\y/\z in {3/0/4,4/2/2,5/0/4,6/2/2,7/0/4,8/2/2,9/0/4,10/2/2,11/0/4}
	{\cubelabel{\x}{\y}{\z}{\queen}}
	\end{tikzpicture}
	\caption{The 3D queen clause gadget for the clauses $x_1 \lor x_2$ if the variable gadgets are at distance one from each other.}\label{fig:clause-2gadget-connection-queens-3D}
\end{figure}

\def\offX{1}
\def\offY{-1}
\def\offV{10}
\begin{figure}[h]
	\centering
	\begin{tikzpicture}[x=(200:0.5cm), y=(-40:0.5cm), z=(90:0.353cm)]
	  \foreach \x/\y in {-2/0,-1/-1,-1/0,-1/1, 0/-2, 0/-1, 0/1 ,0/2,1/-1, 1/0,1/1,2/0,3/-1/,3/0,3/1,4/-2,4/-1,4/1,4/2,5/-1,5/0,5/1,6/0}
		{
		\elemcube{\x-\inc}{\y+\inc}{0}
		}
	\colourcubeZface[black]{0-\inc}{-2+\inc}{0}
	\colourcubeZface[black]{-1-\inc}{0+\inc}{0}
    \colourcubeZface[black]{1-\inc}{0+\inc}{0}
    \colourcubeZface[black]{4-\inc}{-1+\inc}{0}
    \colourcubeZface[black]{4-\inc}{1+\inc}{0}
    \colourcubeZface[black]{6-\inc}{0+\inc}{0}
	\colourcubeZface[sangria]{-2-\inc}{0+\inc}{0}
	\colourcubeZface[sangria]{0-\inc}{-1+\inc}{0}
    \colourcubeZface[sangria]{0-\inc}{1+\inc}{0}
    \colourcubeZface[sangria]{3-\inc}{0+\inc}{0}
    \colourcubeZface[sangria]{5-\inc}{0+\inc}{0}
    \colourcubeZface[sangria]{4-\inc}{-2+\inc}{0}
	\foreach \x/\y in {0/-2,-1/-0,1/0,4/-1,4/1,6/0}
		{
			\cubelabel{\x-\inc}{\y+\inc}{0}{\textbf{T}}
		}
	\foreach \x/\y in {-2/0,0/-1,0/1,3/0,5/0,4/-2}
		{
			\cubelabel{\x-\inc}{\y+\inc}{0}{\textbf{F}}
		}
    \foreach \x/\y in {-2/0,-1/-1,-1/0,-1/1, 0/-2, 0/-1, 0/1 ,0/2,1/-1, 1/0,1/1,2/0,3/-1/,3/0,3/1,4/-2,4/-1,4/1,4/2,5/-1,5/0,5/1,6/0}
		{
		\elemcube{\x+\offV-\inc}{\y+\inc}{0}
		}
    \colourcubeZface[black]{0+\offV-\inc}{-2+\inc}{0}
    \colourcubeZface[black]{-1+\offV-\inc}{0+\inc}{0}
    \colourcubeZface[black]{1+\offV-\inc}{0+\inc}{0}
    \colourcubeZface[black]{4+\offV-\inc}{-1+\inc}{0}
    \colourcubeZface[black]{4+\offV-\inc}{1+\inc}{0}
    \colourcubeZface[black]{6+\offV-\inc}{0+\inc}{0}
    \colourcubeZface[sangria]{-2+\offV-\inc}{0+\inc}{0}
    \colourcubeZface[sangria]{0+\offV-\inc}{-1+\inc}{0}
    \colourcubeZface[sangria]{0+\offV-\inc}{1+\inc}{0}
    \colourcubeZface[sangria]{3+\offV-\inc}{0+\inc}{0}
    \colourcubeZface[sangria]{5+\offV-\inc}{0+\inc}{0}
    \colourcubeZface[sangria]{4+\offV-\inc}{-2+\inc}{0}

	\foreach \x/\y in {0/-2,-1/-0,1/0,4/-1,4/1,6/0}
		{
			\cubelabel{\x+\offV-\inc}{\y+\inc}{0}{\textbf{T}}
		}
	\foreach \x/\y in {-2/0,0/-1,0/1,3/0,5/0,4/-2}
		{
			\cubelabel{\x+\offV-\inc}{\y+\inc}{0}{\textbf{F}}
		}
	\foreach \x/\y/\z in {2/-2/1,2/-1/1,12/-2/1,12/-1/1,2/0/1}
    {\glasselemcube[forestgreen]{\x+\offX}{\y+\offY}{\z}{.8}}
    \foreach \x/\y/\z in {2/-2/1,12/-2/1}
    {
        \cubelabel{\x+\offX}{\y+\offY}{\z}{\queen}
    }
    \glasselemcube[black]{2+\offX}{0+\offY}{2}{.9}
    \cubelabel{2+\offX}{0+\offY}{2}{\textbf{T}}
    \foreach \x/\y/\z in {3/0/2,3/0/3,3/0/4,4/0/2,4/1/2,4/2/2,5/0/2,5/0/3,5/0/4,6/0/2,6/1/2,6/2/2,7/0/2,7/0/3,7/0/4,8/0/2,8/1/2,8/2/2,9/0/2,9/0/3,9/0/4,10/0/2,10/1/2,10/2/2,11/0/2,11/0/3,11/0/4,12/0/1 }
    {\glasselemcube[forestgreen]{\x+\offX}{\y+\offY}{\z}{.8}}
    \glasselemcube[black]{12+\offX}{0+\offY}{2}{.9}
    \colourcubeZface[black]{12+\offX}{0+\offY}{2}
    \cubelabel{12+\offX}{0+\offY}{2}{\textbf{T}}

    \foreach \x/\y/\z in {3/0/4,4/2/2,5/0/4,6/2/2,7/0/4,8/2/2,9/0/4,10/2/2,11/0/4}
    {\cubelabel{\x+\offX}{\y+\offY}{\z}{\queen}}
  \foreach \x/\y in {-2/0,-1/-1,-1/0,-1/1, 0/-2, 0/-1, 0/1 ,0/2,1/-1, 1/0,1/1,2/0,3/-1/,3/0,3/1,4/-2,4/-1,4/1,4/2,5/-1,5/0,5/1,6/0}
		{
		\elemcube{\x-\inc}{\y+\inc+4}{0}
		}
  \colourcubeZface[sangria]{0-\inc}{2+\inc+4}{0}
    \colourcubeZface[sangria]{-1+-\inc}{0+\inc+4}{0}
    \colourcubeZface[sangria]{1-\inc}{0+\inc+4}{0}
    \colourcubeZface[sangria]{4-\inc}{-1+\inc+4}{0}
    \colourcubeZface[sangria]{4-\inc}{1+\inc+4}{0}
    \colourcubeZface[sangria]{6-\inc}{0+\inc+4}{0}
    \colourcubeZface[black]{-2-\inc}{0+\inc+4}{0}
    \colourcubeZface[black]{0-\inc}{-1+\inc+4}{0}
    \colourcubeZface[black]{0+-\inc}{1+\inc+4}{0}
    \colourcubeZface[black]{3-\inc}{0+\inc+4}{0}
    \colourcubeZface[black]{5-\inc}{0+\inc+4}{0}
    \colourcubeZface[black]{4-\inc}{2+\inc+4}{0}

	\foreach \x/\y in {0/2,-1/-0,1/0,4/-1,4/1,6/0}
		{
			\cubelabel{\x-\inc}{\y+\inc+4}{0}{\textbf{T}}
		}
	\foreach \x/\y in {-2/0,0/-1,0/1,3/0,5/0,4/2}
		{
			\cubelabel{\x-\inc}{\y+\inc+4}{0}{\textbf{F}}
		}
  \foreach \x/\y in {-2/0,-1/-1,-1/0,-1/1, 0/-2, 0/-1, 0/1 ,0/2,1/-1, 1/0,1/1,2/0,3/-1/,3/0,3/1,4/-2,4/-1,4/1,4/2,5/-1,5/0,5/1,6/0}
		{
		\elemcube{\x-\inc+10}{\y+\inc+4}{0}
		}
  \colourcubeZface[sangria]{0-\inc+\offV}{2+\inc+4}{0}
	\colourcubeZface[sangria]{-1-\inc+\offV}{0+\inc+4}{0}
    \colourcubeZface[sangria]{1-\inc+\offV}{0+\inc+4}{0}
    \colourcubeZface[sangria]{4-\inc+\offV}{-1+\inc+4}{0}
    \colourcubeZface[sangria]{4-\inc+\offV}{1+\inc+4}{0}
    \colourcubeZface[sangria]{6-\inc+\offV}{0+\inc+4}{0}
	\colourcubeZface[black]{-2-\inc+\offV}{0+\inc+4}{0}
	\colourcubeZface[black]{0-\inc+\offV}{-1+\inc+4}{0}
    \colourcubeZface[black]{0-\inc+\offV}{1+\inc+4}{0}
    \colourcubeZface[black]{3-\inc+\offV}{0+\inc+4}{0}
    \colourcubeZface[black]{5-\inc+\offV}{0+\inc+4}{0}
    \colourcubeZface[black]{4-\inc+\offV}{2+\inc+4}{0}

	\foreach \x/\y in {0/2,-1/-0,1/0,4/-1,4/1,6/0}
		{
			\cubelabel{\x-\inc+10}{\y+\inc+4}{0}{\textbf{T}}
		}
	\foreach \x/\y in {-2/0,0/-1,0/1,3/0,5/0,4/2}
		{
			\cubelabel{\x-\inc+10}{\y+\inc+4}{0}{\textbf{F}}
		}
	\draw (11.35,9.45,0) node[below]{$\mathbf{x_1}$};
	\draw  (1.35,9.45,0) node[below] {$\mathbf{x_2}$};
	\end{tikzpicture}
	\caption{The clause gadget on top of the two variable gadgets for the clause $\mathbf x_1\lor \mathbf x_2$.}\label{fig:clause-gadget-clausel-queens-3D}
\end{figure}

\begin{lemma}\label{lem:clause-gadgets-queens}
    Let $c$ be a clause. The clause gadget associated with $c$ when placed on top of variable gadgets can take $\ell_Q(c)+1$ queens if it evaluates to \true{}, or $\ell_Q(c)$ if it evaluates to \false{}, where the function $\ell_Q$ is the length of the clause gadget.
\end{lemma}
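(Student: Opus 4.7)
The plan is to mirror the proof of Lemma~\ref{lem:clause-gadget-rooks}, adapting the argument to queens by tracking their diagonal attacks in addition to the row/column ones. First I would examine the clause gadget $K(c)$ as constructed: it is a strip composed of $\ell_Q(c)$ pieces that alternate between connectors, top nodes, and left nodes (see Figures~\ref{fig:clause-2gadget-connection-queens-3D} and~\ref{fig:clause-gadget-clausel-queens-3D}). Each individual piece admits at most one queen in isolation; the goal is to show that along the strip we can realise exactly one queen per piece, with the possibility of a single additional queen at one end precisely when the clause is satisfied.

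Next I would establish the upper bound $\ell_Q(c)+1$. Because $K(c)$ sits in a thin planar strip above the literal gadgets, any two queens in $K(c)$ that lie within the same piece attack one another (rook-style) and any queen sitting on a connector that is \textbf{not} blocked from above by a literal-gadget queen forces the adjacent top/left node to stay empty via a diagonal attack. A direct case analysis at each junction of two consecutive pieces, relying on the alternation of top and left nodes, shows that at most one queen can be placed per piece, with a single extra queen tolerated only at a connector whose foot touches the underlying literal gadget on a tile whose T/F label matches the truth value propagated by that literal.

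For the matching lower bound, I would place one queen on each top or left node, as indicated in Figure~\ref{fig:clause-2gadget-connection-queens-3D}, and verify that this choice dominates every cube of $K(c)$ while remaining non-attacking: the alternating pattern guarantees that the diagonal lines through two consecutive node-queens are broken by the intervening connector column, and the vertical attacks from the literal gadget's queens only reach the connector foot. When at least one literal of $c$ evaluates to \true{}, the corresponding connector sits on a T tile (if the literal is positive in $c$) or F tile (if negated) that is \emph{not} occupied by a queen of the literal gadget, so the top cube of that connector is unattacked from below and an additional queen can be placed there, yielding $\ell_Q(c)+1$ queens. Conversely, if every literal of $c$ evaluates to \false{}, then every connector of $K(c)$ sits above a queen of the literal gadget, and the vertical attack rules out the extra queen, capping the count at $\ell_Q(c)$.

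The main obstacle is the diagonal bookkeeping: one must confirm that no diagonal attack sneaks through the height difference between the clause strip and the literal layer, and that diagonals among connector tops do not interfere when several clauses share a literal row. Both issues are handled by the height and alternation in the construction of the connectors, and the small size of the strip makes the remaining verification a finite case check, which can in fact be discharged by the solver of Tool~\ref{tool:gadget-check} applied slice by slice.
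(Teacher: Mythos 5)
Your overall strategy is the same as the paper's: count one queen per node and per connector to obtain $\ell_Q(c)$, argue that at most one extra queen fits anywhere in the gadget, and tie the availability of that extra queen to whether the literal-gadget tile beneath a connector is unoccupied. The paper's own proof is in fact terser than your proposal (it simply asserts the placement on the protruding nodes and connectors and that ``maybe a queen'' goes on a gray T tile when the clause is satisfied), so your added diagonal bookkeeping and the suggestion to discharge the residual finite check with Tool~\ref{tool:gadget-check} are supplements rather than departures.

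There is, however, one step that fails as written: you have the T/F convention transposed. In the construction (for queens exactly as for rooks), the connector is placed on the red F tile when $x_i$ appears \emph{positively} in $c$, and on the gray T tile when it appears negated. With your convention (positive literal $\Rightarrow$ connector on a T tile), a satisfied positive literal would place the variable gadget's two extra queens precisely on the T tiles --- that is what the \true{} state of the variable gadget means --- so the connector's foot would be \emph{occupied} and the extra queen blocked, which is the opposite of what the lemma requires. With the paper's convention the equivalence comes out correctly: the literal is satisfied if and only if the tile under its connector carries no queen, if and only if the extra queen can be added, and at most one connector can profit from this since they all lie on a common attack line. A second, smaller inconsistency: your explicit lower-bound placement puts queens only on the top and left nodes, but $\ell_Q(c)$ counts the connectors as well, and each connector must carry one queen (on its protruding foot) to reach the claimed total; your opening paragraph has the correct count of one queen per piece, so this is a slip in the write-up rather than in the idea.
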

\begin{proof}
    Let $c$ be a clause and $K(c)$ be the associated clause gadget. The clause gadget is joined with its literals  placed on a line by placing it on top of the extremal nodes; see the example of Figure~\ref{fig:clause-gadget-clausel-queens-3D}. It covers the leftmost to the rightmost variable and is placed on F tiles if $x_i$ appears as $x_i$ in $c$, and on T tiles if it appears as $\overline x_i$ in $c$. Then, a maximum placement of queens will have all queens on the protruding nodes and can also have a queen on the gray T tiles if the clause is satisfied. There is one queen for each protruding node, one more for each connector, and maximally one extra in one of the connectors, thus showing that there are $\ell_Q(c)$ or $\ell_Q(c)+1$ queens in a maximum dominating position.
\end{proof}

\begin{proposition}\label{prop:psat-to-polyc-queens}
    From an instance $C$ of $\psat$, it is possible to construct a polycube $P_3^Q(C)$ of polynomial size in polynomial time.
\end{proposition}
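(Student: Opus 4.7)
The plan is to mirror the construction of Proposition~\ref{prop:psat-to-polyc-rooks}, now using the queen-specific gadgets. Starting from an instance $C$ of $\psat$, let $G_C$ denote the planar bipartite variable-clause graph. By the standard orthogonal planar embedding used in~\cite[Lem.~8]{alpert2021art}, we first realise $G_C$ in an $O(n) \times O(n)$ grid, where $n$ counts the variables and the clauses (and recall that for $\psat$ we have $n_{clause} = O(n_{var})$ since each variable appears in exactly three clauses). For each variable vertex we place a copy of the literal gadget of Figure~\ref{fig:variable-gadget-queens_3D}, suitably scaled and spaced; for each edge we interpolate a chain of connection gadgets from Figure~\ref{fig:connection-gadget-queens-3D}, using the corner/rotation construction of Lemma~\ref{lem:propagation-gadget-queens-3D} whenever the edge in the embedding must turn; and for each clause vertex we place the clause gadget described around Figure~\ref{fig:clause-2gadget-connection-queens-3D}, sitting atop the relevant literal gadgets and consistent with whether the literal appears positively or negatively in $c$.

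The first thing to verify is bounded height. Each literal gadget is flat (height $1$), each connection gadget uses a constant number of layers ($O(1)$, analogously to the height-$9$ bound in the rook case for corners and splits), and the clause gadgets protrude by only a few layers above the literal plane. Thus the whole construction fits inside a rectangular prism of the form $O(n) \times O(n) \times O(1)$, giving a polycube of size $O(n^2)$. The number of cells to be emitted is therefore polynomial in $|C|$, and since the placement decisions at each grid cell depend only on local data (the label of the vertex or edge of $G_C$ routed through that cell), the construction runs in polynomial time.

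The main obstacle is not the counting but the \emph{geometric separation} between gadgets: unlike rooks, queens attack along the $(3^d-1)/2$ lines through the cube center, so a careless packing could create spurious diagonal lines of sight between two otherwise independent gadgets, which would both ruin the truth-value semantics of Lemma~\ref{lem:literal-gadget-queens_3D} and invalidate the count in Lemma~\ref{lem:propagation-gadget-queens-3D}. I would address this by inserting a constant-width buffer of empty cells between every two adjacent gadgets in the embedding, choosing the buffer thick enough that no queen placed in one gadget can see, along any of the $13$ queen directions in $\mathbb{Z}^3$, into another gadget; because the gadgets have bounded individual extent, a constant buffer suffices, and the resulting blow-up keeps the bounding box at $O(n) \times O(n) \times O(1)$. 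I would also verify that the alternating top/left pattern used to elongate the clause gadget preserves the parity/length bookkeeping of $\ell_Q(c)$, so that the subsequent counting argument of Lemma~\ref{lem:clause-gadgets-queens} still matches the desired target size.

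Finally, to close the proof I would explicitly describe the algorithm: compute a planar embedding of $G_C$ in polynomial time; traverse it cell by cell, emitting the appropriate literal, connection, corner, split, or clause gadget; and output the union, which is the polycube $P_3^Q(C)$. Both the size bound $O(n^2)$ and the polynomial running time follow immediately from the constant per-cell cost and the $O(n) \times O(n) \times O(1)$ bounding box.
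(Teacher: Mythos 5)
Your proposal follows essentially the same route as the paper, which simply states that the proof is analogous to Proposition~\ref{prop:psat-to-polyc-rooks} and omits it: there, the construction likewise replaces variables, edges, and clauses of the planar incidence graph by the corresponding gadgets, bounds the height by a constant so the polycube fits in an $O(n)\times O(n)\times O(1)$ prism of size $O(n^2)$, and appeals to the embedding argument of~\cite[Lem.~8]{alpert2021art} for polynomial time. Your additional point about inserting constant-width buffers to block spurious diagonal lines of sight between queen gadgets is a genuine refinement that the paper leaves implicit, and it is correctly handled since the gadgets have bounded extent.
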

\begin{proof}
    The proof follows the same strategy of Proposition~\ref{prop:psat-to-polyc-rooks} with different enlargements needed for the similar gadgets. It is thus omitted.
\end{proof}

We illustrate the process of Proposition~\ref{prop:psat-to-polyc-queens} on a small example by taking the instance $C$ of Figure~\ref{fig:ex_psat} and constructing the polycube $P^Q_3(C)$ associated with it in Figure~\ref{fig:ex_psat_poly}.

\begin{figure}
    \centering
    \begin{tikzpicture}[scale=.25]
    \foreach \x/\y in {
        1/1,2/1,3/1,4/1,5/1,6/1,7/1,8/1,9/1,10/1,11/1,12/1,
        1/2,2/2,3/2,4/2,5/2,6/2,7/2,8/2,9/2,10/2,11/2,12/2,
        3/3,4/3,10/3,11/3,
        3/4,4/4,5/4,6/4,7/4,8/4,10/4,11/4,
        3/5,4/5,5/5,6/5,7/5,8/5,10/5,11/5,
        3/6,4/6,10/6,11/6,
        10/7,11/7,
        1/8,2/8,3/8,4/8,5/8,6/8,7/8,8/8,10/8,11/8,
        1/9,2/9,3/9,4/9,5/9,6/9,7/9,8/9,10/9,11/9,
        3/10,4/10,10/10,11/10,
        3/11,4/11,10/11,11/11,
        10/12,11/12,
        3/13,4/13,10/13,11/13,
        3/14,4/14,
        1/15,2/15,3/15,4/15,5/15,6/15,7/15,8/15,9/15,10/15,11/15,
        1/16,2/16,3/16,4/16,5/16,6/16,7/16,8/16,9/16,10/16,11/16
        } {
                \foreach \z/\w in {0/0,-1/1,0/1,1/1,-2/2,-1/2,1/2,2/2,-1/3,0/3,1/3,0/4} {
                \path [draw=brown!, fill=brown!30] (.5+4*\x+\z-0.45, .5+4*\y+\w-1.45)
				   -- ++(0,.9)
				   -- ++(.9,0)
				   -- ++(0,-.9)
				   --cycle;
       }
       }
     \foreach \x/\y in {
        1/1,2/1,        1/2,2/2,
        1/8,2/8,        1/9,2/9,
        1/15,2/15,        1/16,2/16
        } {
                \foreach \z/\w in {0/0,-1/1,0/1,1/1,-2/2,-1/2,1/2,2/2,-1/3,0/3,1/3,0/4} {
                \path [draw=g-green!, fill=g-green!50] (.5+4*\x+\z-0.45, .5+4*\y+\w-1.45)
				   -- ++(0,.9)
				   -- ++(.9,0)
				   -- ++(0,-.9)
				   --cycle;
       }
       }
       \foreach \x/\y in {
        7/4,8/4,        7/5,8/5,
        7/8,8/8,        7/9,8/9
        } {
                \foreach \z/\w in {0/0,-1/1,0/1,1/1,-2/2,-1/2,1/2,2/2,-1/3,0/3,1/3,0/4} {
                \path [draw=le-pink!, fill=le-pink!30] (.5+4*\x+\z-0.45, .5+4*\y+\w-1.45)
				   -- ++(0,.9)
				   -- ++(.9,0)
				   -- ++(0,-.9)
				   --cycle;
       }
       }
       \foreach \x/\y in {
        3/10,4/10,        3/11,4/11,
        3/13,4/13,        3/14,4/14
        } {
                \foreach \z/\w in {0/0,-1/1,0/1,1/1,-2/2,-1/2,1/2,2/2,-1/3,0/3,1/3,0/4} {
                \path [draw=le-orange!, fill=le-orange!30] (.5+4*\x+\z-0.45, .5+4*\y+\w-1.45)
				   -- ++(0,.9)
				   -- ++(.9,0)
				   -- ++(0,-.9)
				   --cycle;
       }
       }
       \foreach \x/\y in {
        10/12,11/12,        10/13,11/13,
        10/15,11/15,        10/16,11/16
        } {
                \foreach \z/\w in {0/0,-1/1,0/1,1/1,-2/2,-1/2,1/2,2/2,-1/3,0/3,1/3,0/4} {
                \path [draw=le-purple!, fill=le-purple!30] (.5+4*\x+\z-0.45, .5+4*\y+\w-1.45)
				   -- ++(0,.9)
				   -- ++(.9,0)
				   -- ++(0,-.9)
				   --cycle;
       }
       }
       \draw (0,4*1+1) node {$x_3$};
       \draw (0,4*8-2) node {$x_2$};
       \draw (0,4*16-1) node {$x_1$};
       \draw (4*13,4*15+1) node[right,rotate=270] {$x_1  \lor \overline x_3$};
       \draw (4*6-1,4*13-2) node[rotate =270] {$\overline x_1 \lor \overline x_2$};
       \draw (4*8-2,4*7) node[rotate=90] {$  \overline x_3\lor \overline{x}_2$};
       \draw [decorate, decoration = {brace}] (-2,2) --  (-2,4*16+2);
       \draw (-6,4*8) node[rotate=90] {$x_1\lor x_2 \lor x_3$};
    \foreach \x/\y in {1/1,3/1,5/1,7/1,9/1,11/1,
    6/4,8/4,
    1/8,3/8,5/8,7/8,
     3/13,
     1/15,5/15,7/15,9/15,11/15
    }
    {
    \path [draw=red!, fill=red!90, opacity = .9] (.5+4*\x-0.45, .5+4*\y-1.45)
				   -- ++(0,.9)
				   -- ++(.9,0)
				   -- ++(0,-.9)
				   --cycle;
    }
     \foreach \x/\y in {2/1,4/1,6/1,8/1,10/1,12/1,
     5/4,7/4,
     2/8,4/8,6/8,8/8,
    4/13,
    2/15,6/15,8/15,10/15
     }
    {
    \path [draw=gray!, fill=gray!90, opacity = .9] (.5+4*\x-0.45, .5+4*\y-1.45)
				   -- ++(0,.9)
				   -- ++(.9,0)
				   -- ++(0,-.9)
				   --cycle;
    }
    \foreach \x/\y in {1/2,5/2,7/2,9/2,
    6/5,8/5,
    3/6,
    1/9,5/9,7/9,
    3/11,
    10/13,
    1/16,3/16,5/16,7/16,9/16,11/16}
    {
    \path [draw=gray!, fill=gray!90, opacity = .9] (.5+4*\x-0.45, .5+4*\y+4-1.45)
				   -- ++(0,.9)
				   -- ++(.9,0)
				   -- ++(0,-.9)
				   --cycle;
    }
     \foreach \x/\y in {2/2,6/2,8/2,12/2,
    5/5,7/5,
    4/6,
    2/9,6/9,8/9,
    4/11,
    11/13,
    2/16,4/16,6/16,8/16,10/16
    }
    {
    \path [draw=red!, fill=red!90, opacity = .9] (.5+4*\x-0.45, .5+4*\y+4-1.45)
				   -- ++(0,.9)
				   -- ++(.9,0)
				   -- ++(0,-.9)
				   --cycle;
    }
    \foreach \x/\y in {12/1,11/4,11/6,11/8,11/10,11/12,11/16,
    4/3,4/10,4/13,
    8/5,8/8}
    {
    \path [draw=red!, fill=red!90, opacity = .9] (.5+4*\x+2-0.45, .5+4*\y+2-1.45)
				   -- ++(0,.9)
				   -- ++(.9,0)
				   -- ++(0,-.9)
				   --cycle;
    }
     \foreach \x/\y in {4/6,4/11,4/14,
     8/4,8/9,
     11/3,11/5,11/7,11/9,11/11,11/13,11/15,
     12/2}
    {
    \path [draw=gray!, fill=gray!90, opacity = .9] (.5+4*\x+2-0.45, .5+4*\y+2-1.45)
				   -- ++(0,.9)
				   -- ++(.9,0)
				   -- ++(0,-.9)
				   --cycle;
    }
    \foreach \x/\y in {1/2,1/9,1/16,
    3/4,3/6,3/11,3/14,
    10/3,10/5,10/7,10/9,10/11,10/13}
    {
    \path [draw=red!, fill=red!90, opacity = .9] (.5+4*\x-2-0.45, .5+4*\y+2-1.45)
				   -- ++(0,.9)
				   -- ++(.9,0)
				   -- ++(0,-.9)
				   --cycle;
    }
     \foreach \x/\y in {1/1,1/8,1/15,
     3/3,3/5,3/10,3/13,
     10/4,10/6,10/8,10/10,10/12}
    {
    \path [draw=gray!, fill=gray!90, opacity = .9] (.5+4*\x-2-0.45, .5+4*\y+2-1.45)
				   -- ++(0,.9)
				   -- ++(.9,0)
				   -- ++(0,-.9)
				   --cycle;
    }
       \foreach \x/\y in {1/3,1/4,1/5,1/6,1/7,1/8,1/10,1/11,1/12,1/13,1/14,1/15, 
       3/11,3/12 
       }
       {\foreach \z/\w in {-2/1,-1/1,0/1,-2/2,-2/3,-1/3,0/3,-2/4} {
                \path [draw=gray!, fill=gray!30, opacity = .5] (.5+4*\x+\z-0.45, .5+4*\y+\w-1.45)
				   -- ++(0,.9)
				   -- ++(.9,0)
				   -- ++(0,-.9)
				   --cycle;
       }}
       \foreach \x/\y in {1/16, 
       3/13 
       }
       {\foreach \z/\w in {-2/1,-1/1,-0/1,-2/2,-3/2,-4/2} {
                \path [draw=gray!, fill=gray!30, opacity = .5] (.5+4*\x+\z-0.45, .5+4*\y+\w-1.45)
				   -- ++(0,.9)
				   -- ++(.9,0)
				   -- ++(0,-.9)
				   --cycle;
       }}
       \foreach \x/\y in {1/2, 
       3/10 
       }
       {\foreach \z/\w in {-2/3,-1/3,0/3,-2/2,-3/2,-4/2,-2/4} {
                \path [draw=gray!, fill=gray!30, opacity = .5] (.5+4*\x+\z-0.45, .5+4*\y+\w-1.45)
				   -- ++(0,.9)
				   -- ++(.9,0)
				   -- ++(0,-.9)
				   --cycle;
       }}
       \foreach \x/\y in {1/9 
       }
       {\foreach \z/\w in {-2/1,-1/1,-0/1,-2/2,-3/2,-4/2,-2/3,-1/3,-0/3,-2/4} {
                \path [draw=gray!, fill=gray!30, opacity = .5] (.5+4*\x+\z-0.45, .5+4*\y+\w-1.45)
				   -- ++(0,.9)
				   -- ++(.9,0)
				   -- ++(0,-.9)
				   --cycle;
       }}
       \foreach \x/\y in {
      8/5,8/6,8/7,8/8, 
       11/14,11/15 
       }
       {\foreach \z/\w in {2/1,1/1,0/1,2/2,2/3,1/3,-0/3,2/4} {
                \path [draw=gray!, fill=gray!30, opacity = .5] (.5+4*\x+\z-0.45, .5+4*\y+\w-1.45)
				   -- ++(0,.9)
				   -- ++(.9,0)
				   -- ++(0,-.9)
				   --cycle;
       }}
       \foreach \x/\y in {
       8/9, 
       11/16 
       }
       {\foreach \z/\w in {2/1,1/1,0/1,2/2,3/2,4/2} {
                \path [draw=gray!, fill=gray!30, opacity = .5] (.5+4*\x+\z-0.45, .5+4*\y+\w-1.45)
				   -- ++(0,.9)
				   -- ++(.9,0)
				   -- ++(0,-.9)
				   --cycle;
       }}
       \foreach \x/\y in {8/4, 
       11/13 
       }
       {\foreach \z/\w in {2/3,1/3,0/3,2/2,3/2,4/2,2/4} {
                \path [draw=gray!, fill=gray!30, opacity = .5] (.5+4*\x+\z-0.45, .5+4*\y+\w-1.45)
				   -- ++(0,.9)
				   -- ++(.9,0)
				   -- ++(0,-.9)
				   --cycle;
       }}
    \end{tikzpicture}
    \caption{Top view of the polycube $P^Q_3(C)$ coming from the instance $C$ of Figure~\ref{fig:ex_psat}. The variables $x_1,x_2,x_3$ are in teal
    and propagated by the brown polycubes. The clauses are in gray and are \emph{on top} of the colored polycubes. The tiles in red and dark gray represent some F and T cubes, respectively. The maximum number of queens lies between 380 and 384, as $n_{2neigh} = 11$, $n_{3neigh} = 38$, $n_{4neigh} = 5$, and the length of the clauses is given by $\ell_Q(c_1) = 57$, $\ell_Q(c_2) = 13$, $\ell_Q(c_3) = 21$, $\ell_Q(c_4) = 13$. There are 6 maximum placements  of 384 queens, 2 for each of the 3 solutions $(1,0,1)$, $(1,0,0)$ and $(0,1,0)$}.
    \label{fig:ex_psat_poly}
\end{figure}

We now show how to translate the instance of $\psat$ into a domination problem.
\begin{lemma}\label{lem:equivalent-queens}
    Let $C$ be an instance of $\psat$ and $P_3^Q(C)$ be its associated polycube. Let
    \[
    m_Q := 4n_{4neigh} + 5 n_{3neigh} + 6 n_{2neigh}+ \sum_{c\in \{\text{clauses of }C\}} \ell_Q(c),
    \]
    where $n_{Ineigh}$ is the number of pairs of elements with $I$ neighbors and $\ell_Q$ is the length of the queen clause gadget. Then one can place $m_Q+n_{clause}$ non-attacking queens on $P_3^Q(C)$ if and only if $C$ is satisfiable.
\end{lemma}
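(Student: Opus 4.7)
The plan is to mirror the proof of Lemma~\ref{lem:equivalent-rooks} with the three queen-gadget lemmas (Lemmas~\ref{lem:literal-gadget-queens_3D},~\ref{lem:propagation-gadget-queens-3D},~\ref{lem:clause-gadgets-queens}) playing the role the rook-gadget lemmas played there. The global polycube $P_3^Q(C)$ decomposes into (i) the literal gadgets, (ii) the connection gadgets that propagate each literal's signal, and (iii) one clause gadget per clause placed on top of the extremal nodes of the literal gadgets. Because of the vertical separation in the construction of Proposition~\ref{prop:psat-to-polyc-queens}, queens belonging to different gadgets never share a row, column, diagonal, or pillar along which they could attack each other, so the maximum number of non-attacking queens on $P_3^Q(C)$ equals the sum of the maxima on each gadget. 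I would first verify this non-interaction carefully, since it is the only non-local step in the argument.

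For the ``if'' direction, I would start from a truth assignment satisfying every clause in $C$. On each literal gadget I place the $12$ queens in the state ($\true$ or $\false$) matching the assignment (Lemma~\ref{lem:literal-gadget-queens_3D}). Along every chain of connection gadgets I pick the maximal placement compatible with the transmitted signal; by Lemma~\ref{lem:propagation-gadget-queens-3D} this uses exactly $4n_{4neigh}+5n_{3neigh}+6n_{2neigh}$ queens and preserves the signal around corners and across splittings. Finally, on each clause gadget $K(c)$, Lemma~\ref{lem:clause-gadgets-queens} allows a placement of $\ell_Q(c)+1$ non-attacking queens since $c$ evaluates to $\true$. Summing gives exactly $m_Q+n_{clause}$ queens.

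For the ``only if'' direction I take a non-attacking placement of $m_Q+n_{clause}$ queens and use the three lemmas to bound the contribution of each gadget from above: literal and connection gadgets together contribute at most $m_Q$ queens, and each clause gadget $K(c)$ contributes at most $\ell_Q(c)+1$, with equality attained only when $c$ evaluates to $\true$. Since the total equals $m_Q+n_{clause}=m_Q+\sum_c 1$, equality must hold simultaneously in every upper bound. In particular, every clause gadget attains $\ell_Q(c)+1$, so every clause is satisfied; moreover, the literal gadgets are each in one of their two maximal states, giving a globally consistent truth assignment (consistency across the connection gadgets is exactly the content of Lemma~\ref{lem:propagation-gadget-queens-3D}). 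Hence $C$ is satisfiable.

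The main obstacle I anticipate is the consistency check in both directions: I need to make sure that when two clause gadgets share the same literal, the maximal placements they force on that literal's connection tree agree, and that no stray queen line attacks across the cubic separation between the variable layer and the clause layer. The former follows from the fact that the connection gadget's maximum is achieved for both values and the choice of value is the only free parameter; the latter follows from the height offsets chosen in Proposition~\ref{prop:psat-to-polyc-queens} together with the queen attack rule that a line of sight is blocked when it leaves the polycube.
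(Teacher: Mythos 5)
Your proposal is correct and follows essentially the same route as the paper's proof: decompose $P_3^Q(C)$ into literal, connection, and clause gadgets, use Lemmas~\ref{lem:literal-gadget-queens_3D}--\ref{lem:clause-gadgets-queens} to count the maximum on each piece, and observe that the only way to reach $m_Q+n_{clause}$ is for every clause gadget to carry the extra queen. In fact your write-up is more explicit than the paper's (which leaves the gadget non-interaction and consistency checks implicit), but the underlying argument is the same.
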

\begin{proof}
    Let $C$ be an instance, and construct the polycube $P_3^Q(C)$. If $C$ is satisfiable,  there is an assignment of values ensuring that all clauses are satisfied. For each satisfied clause, there will be a queen added to the clause gadget. Then, from Lemmas~\ref{lem:literal-gadget-queens_3D}--\ref{lem:clause-gadgets-queens}, we get that there are $m_Q+n_{clause}$ queens.

    If the polycube $P_3^Q(C)$ has a maximum placement of $m_Q+n_{clause}$, it means that all clauses are satisfied, since they are the only way to add more than $n_{Q}$ queens. Thus, $C$ is satisfiable.
\end{proof}

With this last lemma, everything is in place to prove Theorem~\ref{thm:nphard-queens}.

\begin{proof}[Proof of Theorem~\ref{thm:nphard-queens}]
Lemma~\ref{lem:verifP} shows that verifying a solution is done in polynomial time, and thus that the problem is in the class NP.

Let $C$ be an instance of $\psat$ and $P_3^Q(C)$ be the polycube constructed from it. From Proposition~\ref{prop:psat-to-polyc-queens}, we know $P_3^Q(C)$ is polynomially-sized and was constructed in polynomial time. Lemma~\ref{lem:equivalent-queens} then shows that it is equivalent to finding a guarding set of $m_Q+n_{clause}$ queens. As $\psat$ is NP-complete by Proposition~\ref{prop:psatNP}, this means that the non-attacking queen set problem for $3$--polycubes is NP-complete, and then it is also NP-complete for $d$-polycubes when $d\geq 3$.
\end{proof}

\section{Investigation on the domination of polyominoes}\label{sec:openquestions}
In this section, we consider open questions for polyominoes, specifically for the class of convex polyominoes. The interested reader can use Tool~\ref{tool:gadget-check}  to get the number of maximum dominating queen positions on any gadget on a polyomino. The underlying algorithm is explained in the next section.

\subsection{Maximum queen domination on polyominoes}
For the maximum rook domination problem, Theorem~\ref{thm:nphard-rooks} and~\cite[Thm~12]{alpert2021art} present the whole picture: there is a polynomial algorithm to solve the instance on polyominoes, and it is NP-complete for higher-dimensional polycubes. The argument of~\cite[Thm~12]{alpert2021art} translates this into a bipartite graph matching problem for which known polynomial algorithms exist. We present another argument that it is in P and extend this discussion at the end of Section~\ref{sec:modelcompu}.

In the case of maximum queen domination, Theorem~\ref{thm:nphard-queens} leaves the question of whether maximum queen domination on polyominoes is NP-complete as in the higher-dimensional case or in P as the rook domination on polyominoes unanswered. Using a similar approach to the rook case proves ineffective: the problem translates into a 4-hypergraph matching problem, which is NP-complete~\cite{karp1972}, thus leaving us unable to conclude. However, we strongly suspect the problem to be NP-complete and put forward the following conjecture.

\begin{conjecture}\label{conj:nphard-queens-2d}
    The maximum independent queen domination problem on polyominoes is NP-complete.
\end{conjecture}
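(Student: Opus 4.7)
The plan is to reduce from $\psat$ (Proposition~\ref{prop:psatNP}), mirroring the blueprint of Theorems~\ref{thm:nphard-queens} and~\ref{thm:nphard-rooks} but staying in two dimensions. Membership in NP is already given by Lemma~\ref{lem:verifP}. The task is therefore to construct, in polynomial time, a polyomino $P_2^Q(C)$ and an integer $m_Q(C)$ such that $P_2^Q(C)$ admits a non-attacking dominating set of $m_Q(C)$ queens if and only if $C$ is satisfied. Since the underlying graph of $\psat$ is planar and each variable touches at most three clauses, one can first fix a rectilinear planar embedding of the incidence graph with polynomial area, and then replace each vertex and edge with a dedicated polyomino gadget.

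The first step is a variable gadget having exactly two maximum independent dominating configurations, one per truth value, and exposing distinguished ``T'' and ``F'' tiles on two opposite boundary segments so that each of the (at most three) incident wires can read the appropriate state. For this I would reuse the same cross-shaped variable element already used in the rook case (Figure~\ref{fig:variable-gadget-rooks}), concatenated into a larger planar block the way the 3D literal gadget (Figure~\ref{fig:variable-gadget-queens_3D}) is built, but laid out in the plane. The second step is a wire gadget: a narrow polyomino corridor along which the maximum-saturation configuration is forced, cell by cell, into one of two alternating patterns, so that the truth value at one end is transmitted to the other. Third, one needs corner gadgets, a splitter (because each variable feeds into up to three clauses), and finally a clause gadget that admits one extra queen exactly when at least one of its incoming wires is in the \true{} state. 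Adding up the maximum queen counts of all gadgets and adding $n_{\text{clause}}$ yields $m_Q(C)$, and the usual argument, as in Lemma~\ref{lem:equivalent-queens}, gives the equivalence.

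The main obstacle, and the reason the result is only conjectured, is controlling the \emph{diagonal} attack lines. In 3D, stacking gadgets in the $z$-direction let wires fly over one another; in the plane this option is gone, and queens placed in one gadget can see, along long diagonals of the ambient polyomino, queens in very distant gadgets. To make the gadget counts composable one has to ensure that every such diagonal sight line is interrupted by the complement of $P_2^Q(C)$. The natural remedy is to surround each gadget by thin empty buffers and to make wires genuinely one tile wide with carefully placed notches/bumps so that no two gadgets share an unobstructed diagonal. Making this work uniformly over all wire turns, splitters, and clause junctions, while keeping the polyomino simply connected and polynomial in size, is the technical heart of the proof and the reason I would expect a substantial case analysis (and likely a computer-assisted verification via Tool~\ref{tool:gadget-check}) rather than a short argument. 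Once the gadgets are certified, polynomial-size placement on the planar embedding of $C$ and the summation argument conclude the reduction exactly as in the proofs of Theorems~\ref{thm:nphard-queens} and~\ref{thm:nphard-rooks}.
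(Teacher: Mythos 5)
This statement is a \emph{conjecture} in the paper: the authors do not prove it, and they say explicitly why. The approach that works for rooks (reduction to bipartite matching) degenerates for planar queens into a $4$-hypergraph matching problem, which is itself NP-complete and hence yields no conclusion in either direction; the only supporting evidence the paper offers is Martin's NP-completeness result for \emph{path-connected} polyominoes (walled chessboards), together with computations. So there is no ``paper proof'' to compare against, and your text should be judged as an attempted resolution of an open problem.

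Judged that way, it has a genuine gap, which to your credit you name yourself: no gadgets are actually constructed. The entire content of a reduction of this kind lives in exhibiting explicit variable, wire, corner, splitter, and clause gadgets, certifying their local maximum counts, and---crucially---proving that the counts compose, i.e.\ that the maximum over the whole polyomino is the sum of the per-gadget maxima plus one per satisfied clause. In three dimensions the paper gets composability almost for free because wires are routed in the $z$-direction and gadgets do not see each other; in the plane, long diagonal (and orthogonal) sight lines through the ambient polyomino can couple distant gadgets, and until you produce concrete shapes in which every such line is interrupted by the complement, the analogue of Lemma~\ref{lem:equivalent-queens} is unproven. Note also that the gap between Martin's result and the conjecture is precisely this issue: path-connected polyominoes permit corner-connections that act as walls blocking queen rays, which is the very blocking mechanism you would need to simulate using only edge-connected tiles. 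Your outline is a reasonable plan of attack and correctly locates the technical heart of the problem, but as it stands it establishes nothing beyond what the paper already states; the conjecture remains open.
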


As a support to this conjecture, we first consider a slight generalization of the problem. This generalization is equivalent to what is considered by Martin~\cite{barnaby2007}, and we provide another proof using $n$-queens completion.
\begin{definition}
    A \textbf{path-connected polyomino} is a finite collection of unit squares connected by their edges or by their vertices.
\end{definition}
The NP-completeness of the maximum independent queen domination on path-connected polyominoes was proven by Martin by proving the NP-completeness of maximum independent queen domination on ``walled chessboards.''
\begin{theorem}[{\cite[Thm 1]{barnaby2007}}]\label{thm:pathpolyo}
   Maximum independent queen domination is NP-complete on path-connected polyominoes.
\end{theorem}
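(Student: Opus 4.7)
The plan is to reduce from the $n$-queens completion problem, shown NP-complete by Gent, Jefferson, and Nightingale~\cite{gent2017complexity}. An instance is an $n\times n$ board together with a partial placement $S=\{q_1,\ldots,q_k\}$ of non-attacking queens, and the question is whether $S$ can be completed to $n$ non-attacking queens. Since $n$ non-attacking queens on an $n\times n$ board necessarily place one queen in every row and column, any such completion automatically dominates the board.

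From $(n,S)$ I would construct a path-connected polyomino $P=P(n,S)$ together with an integer $m$ such that $P$ admits a non-attacking dominating set of size $\geq m$ if and only if $S$ completes. The polyomino $P$ is the full $n\times n$ chessboard together with a small \emph{forcing gadget} attached at each $q_i\in S$, joined to the rest of $P$ only through the single vertex of $q_i$. Such vertex-only adjacency --- which is precisely what path-connectivity allows beyond standard edge-connected polyominoes --- is what makes the reduction possible: it lets the gadget cells lie in rows and columns disjoint from those of the rest of $P$, so that a queen on the chessboard can affect the gadget only along an attack line that passes through $q_i$ itself.

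Each forcing gadget is engineered to exhibit the following asymmetry: (i) placing a queen at $q_i$ on the chessboard allows the gadget to be dominated by a maximal non-attacking configuration of some fixed size $a$, while (ii) leaving $q_i$ empty forces any non-attacking dominating configuration of the gadget to have size strictly less than $a$. Setting $m := n + ak$, a completion of $S$ together with the maximal configuration on each gadget yields an independent dominating set of $P$ of size $m$. Conversely, an independent dominating set of $P$ of size $m$ must place a queen at every $q_i$ (otherwise some gadget fails to reach $a$) and therefore contains $n-k$ further non-attacking queens on the main board, which together with $S$ form a completion. Membership in NP follows from an immediate extension of Lemma~\ref{lem:verifP} to path-connected polyominoes, and since each gadget has $O(1)$ cells, the construction produces $|P|=O(n^2)$ in polynomial time.

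The main obstacle is the design of a forcing gadget achieving the asymmetry (i)--(ii) uniformly for every $q_i$. Because queens attack entire rows, columns, and diagonals, one must ensure the gadget cells truly communicate with the chessboard only through attack lines passing through $q_i$. The intended construction is a short chain of cells emerging diagonally from $q_i$ and exiting the bounding box of the chessboard through vertex-only connections, chosen so that each gadget cell lies in a fresh row and column and the only diagonals of gadget cells intersecting the chessboard pass through $q_i$. A case analysis enumerating queen placements inside the gadget, and queens on the chessboard whose attack lines can reach the gadget via $q_i$ (which in any completion include at most $q_i$ itself on the two diagonals through $q_i$, since the completion is non-attacking), then verifies (i)--(ii) and completes the reduction.
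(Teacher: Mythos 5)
Your route is genuinely different from the paper's: the paper proves this statement in one line, by observing that a path-connected polyomino is precisely a ``walled chessboard'' in Martin's sense (a chessboard with tiles that block the queen's ray of attack) and invoking~\cite[Thm~1]{barnaby2007} directly. A self-contained reduction from $n$-queens completion~\cite{gent2017complexity} is exactly the alternative the surrounding text alludes to, so if it worked it would be worthwhile. Your NP-membership argument and the outer counting ($m=n+ak$, equality forcing $n$ queens on the board and $a$ per gadget) are fine as far as they go.

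The genuine gap sits exactly where you place ``the main obstacle'': the forcing gadget is never constructed, and the construction you sketch cannot work as described. First, if $P$ contains the full $n\times n$ chessboard, every cell sharing a vertex with an interior cell $q_i$ is already a board cell, so there is no room to attach a gadget ``through the single vertex of $q_i$''; the gadget must instead begin where an attack line through $q_i$ exits the board. Second, and more seriously, a gadget reachable from the board only along such a line cannot distinguish ``a queen at $q_i$'' from ``a queen anywhere on that line'': attacks are blocked only by cells absent from the polyomino, the board is full, so any board queen on the diagonal (or row, or column) through $q_i$ sees the gadget exactly as a queen at $q_i$ would. Using four gadgets, one at the end of the row, the column and each diagonal of $q_i$, does not repair this, since a placement with one queen on each of those four lines but none at $q_i$ activates all four. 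Hence the asymmetry (i)--(ii) is not achieved by any gadget that communicates with the board only along full attack lines through $q_i$, and the reduction as stated does not go through; some mechanism for isolating the single cell $q_i$ (e.g.\ deleting board cells, which changes the instance) would be needed and is not supplied.
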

\begin{proof}
A walled chessboard, according to Martin, is a chessboard with a set of tiles that stops the queen's ray of attack. This is an equivalent definition to that of a path-connected polyomino.
\end{proof}

Of course, this does not say anything about the complexity of the instance on the smaller subset of polyominoes, but along with extensive computations, it encourages us to believe the conjecture. We think this problem might be an interesting candidate to find a manageable reduction.

\subsection{Domination problems on convex polyominoes}
\begin{definition}[Convex polyominoes]
Let $P$ be a polyomino. A polyomino is called \textbf{row-convex} if each row of $P$ has at most one connected component, it is called \textbf{column-convex} if each column of $P$ has at most one connected component, and it is called \textbf{convex} if it is both row- and column-convex.
\end{definition}

\begin{figure}[h]
    \begin{center}
    \begin{tikzpicture}
        \foreach \x/\y in {1/1, 2/1, 3/1, -1/2, 0/2, 1/2, 2/2, 3/2, 4/2, -1/3, 0/3, 1/3, 2/3, 3/3, 4/3, 5/3, -2/4, -1/4, 0/4, 1/4, 2/4, 3/4, 4/4, -1/5, 0/5, 1/5, 2/5, 3/5, 4/5, 1/6} {
            \path [draw=brown!70, fill=brown!70] (-8+3.5+\x-0.45, 1.5+\y-1.45)
            -- ++(0,.9)
            -- ++(.9,0)
            -- ++(0,-.9)
            --cycle;
        }

        \foreach \x/\y in {} {
            \node[anchor=west] at (-8+3.5+\x-0.4, \y+.5) {\rook};
        }
    \end{tikzpicture}
    \end{center}
    \caption{A convex polyomino.}\label{fig:convex-min}
\end{figure}

The problem of minimum domination for rooks on $n\times n$ square polyominoes is trivial: $n$ rooks are needed and simply putting them on the diagonal gives a solution. However, the same problem for polyominoes is NP-complete (Theorem~\ref{thm:minrookNPcomplete}). Convex polyominoes somehow lie in the middle of these two classes of polyominoes.

It is tempting to think that a simple process mimicking the diagonal domination on $n\times n$ chessboards would work for convex polyominoes; however, there are convex polyominoes where such processes would never find the optimal solution, such as the one presented in Figure~\ref{fig:pathological-polyo}. In the next section, we will present the general solver that we used to generate the solution.  We therefore conjecture that the minimum domination problem for rooks on convex polyominoes is NP-complete.

\begin{conjecture}\label{conj:convex-rooks}
The minimum domination problem for attacking or non-attacking rooks on convex polyominoes is NP-complete.
\end{conjecture}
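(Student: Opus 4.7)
The natural strategy is to follow the gadget-based template of Theorems~\ref{thm:nphard-rooks} and~\ref{thm:minrookNPcomplete}: reduce from $\psat$ by constructing, in polynomial time, a convex polyomino $P(C)$ and a bound $m(C)$ such that $P(C)$ admits a dominating set of at most $m(C)$ (non-)attacking rooks if and only if $C$ is satisfiable. Membership in NP is immediate from Lemma~\ref{lem:verifP}, so the whole difficulty is in the reduction under the extra requirement that $P(C)$ be row- and column-convex.

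My plan would be to embed the variable, connection, and clause gadgets along the four monotone staircases that bound any convex polyomino. Concretely, I would start from a long, thin, nearly rectangular ``spine'' and let variable gadgets appear as small staircase steps on, say, the southern boundary, each step forcing a binary choice between covering one designated row or one designated column by a rook; this choice plays the role of a truth assignment. Clauses would be encoded near the opposite staircase so that the cells in the ``clause region'' are dominated cheaply precisely when at least one literal rook is placed on the satisfying side. Signal propagation between variables and clauses, which in the three-dimensional reductions exploited the extra dimension, would here be done through carefully shaped staircases whose minimum dominating matching is forced to transmit the correct row/column choice without adding to the count, exactly as in the proof template of Proposition~\ref{prop:psat-to-polyc-rooks}.

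The hard step is that convexity forces each row and each column to be a single interval, so every rook dominates the full extent of its row and column inside $P(C)$. This destroys the local isolation on which the usual polyomino gadgets rely: covering choices cannot be confined to a small region, and the problem of minimum attacking rook domination on convex polyominoes reduces directly to minimum vertex cover in the row--column bipartite graph of $P(C)$, which has a very restrictive (doubly convex) structure amenable to K\"onig-type polynomial arguments. So one should expect either a rather delicate staircase construction that smuggles enough combinatorial structure into convex shapes to encode $\psat$, or, conversely, a polynomial algorithm exploiting this bipartite structure that would actually refute the conjecture. In the non-attacking variant, the extra matching constraint on the rook set breaks the direct K\"onig reduction and is, in my view, the more promising case for an NP-hardness proof; I would concentrate the gadget engineering there and only then try to transfer the construction back to the attacking case by adjusting the target value $m(C)$.
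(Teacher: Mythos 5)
The statement you are addressing is a \emph{conjecture}: the paper gives no proof of it, only circumstantial evidence (the convex polyomino of Figure~\ref{fig:pathological-polyo} defeating naive diagonal-style heuristics, and the remark that \cite[Lem.~14]{alpert2021art} makes the attacking and non-attacking cases equivalent). Your proposal is likewise not a proof; it is a research plan, and by its own admission it ends in a fork --- either a ``rather delicate staircase construction'' that you do not supply, or a polynomial algorithm that would \emph{refute} the conjecture. So there is a genuine gap, and it is exactly the one you name yourself: convexity forces every row and every column to be a single interval, so a rook's influence is global within its row and column, and the local isolation that every gadget in Section~\ref{sec:complexity} (and in \cite{alpert2021art}) depends on is destroyed. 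No variable, connection, or clause gadget is actually constructed, and no argument is given that the ``staircase'' encoding transmits truth values without interference across the shared rows and columns of a convex shape. Until that is done, nothing is proved.

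Two further points. First, your reduction of the attacking case to a covering problem on the row--column bipartite graph is not quite vertex cover: placing a rook selects an \emph{edge} (a cell), and you need every cell to share a row or column with a selected cell, which is an edge-domination-type condition, not a vertex cover; the complexity of that problem on (doubly) convex bipartite graphs is precisely what is unknown here, so invoking ``K\"onig-type polynomial arguments'' does not settle anything in either direction. Second, your suggestion to prove hardness first for the non-attacking variant and then ``transfer back by adjusting $m(C)$'' is backwards relative to the paper's own observation: \cite[Lem.~14]{alpert2021art} already gives the equivalence of the two variants, so no separate transfer argument is needed --- but it also means the non-attacking case is no easier a target than the attacking one. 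Your instinct that this problem sits genuinely between the trivial square case and the NP-complete general polyomino case is sound and matches the paper's framing; what is missing is the construction itself.
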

Note that~\cite[Lem.~14]{alpert2021art} implies that proving the conjecture for attacking rooks is equivalent to proving the non-attacking case.

\begin{figure}[h]
  \centering
  \begin{tikzpicture}
  \foreach \x in {1,...,9}
   {\foreach \y in {1,...,9}
   {  \path [draw=brown!70, fill=brown!70] (2.5+\x-0.45, 0.5+\y-.45)
              -- ++(0,.9)
              -- ++(.9,0)
              -- ++(0,-.9)
              --cycle;
              }
              }
   \foreach \x/\y in {0/0,0/1,0/2,0/3,
   1/0,1/1,1/2,1/3,1/4,1/5,
   2/0,2/1,2/2,2/3,2/4,2/5,2/6,2/7,
   3/0,4/0,4/10,5/0,5/10,6/0,6/10,7/0,7/10,8/0,8/10,9/0,9/10,
   12/1,12/2,12/3,12/4,12/5,12/6,12/7,12/8,13/1,13/2} {
              \path [draw=brown!70, fill=brown!70] (.5+\x-0.45, .5+\y-.45)
              -- ++(0,.9)
              -- ++(.9,0)
              -- ++(0,-.9)
              --cycle;
          }

          \foreach \x/\y in {3/1,4/5,5/2,6/8,7/4,8/0,9/3,10/7,11/6} {
              \node[anchor=west] at (.5+\x-0.45, \y+.5) {\rook};
          }
  \end{tikzpicture}
  \caption{A convex polyomino with minimum domination number 9.}\label{fig:pathological-polyo}
\end{figure}

 We also conjecture the same complexity for the minimum domination problems of queens on convex polyominoes.
\begin{conjecture}\label{quest:convex-min-dom-queens}
The minimum domination problem for attacking, or non-attacking, queens on convex polyominoes is NP-complete.
\end{conjecture}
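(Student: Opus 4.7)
The plan is to follow the two-step template used for Theorems~\ref{thm:minrookNPcomplete} and~\ref{thm:minqueenNPcomplete}: membership in NP is immediate from Lemma~\ref{lem:verifP}, so the whole task reduces to establishing NP-hardness by a polynomial-time reduction from \psat{} (or a suitable restriction thereof). Concretely, I would try to build variable, connection, and clause gadgets that assemble into a single convex polyomino whose minimum queen dominating number is a known affine function of the number of satisfied clauses, so that an optimal dominating set encodes a satisfying assignment.

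The main obstacle is convexity itself. The proofs in Section~\ref{sec:complexity} crucially use the third dimension to route signals between gadgets and to block long-range queen attacks via cavities or detours in the polycube, and the proof of~\cite[Thm~4]{alpert2021art} for arbitrary polyominoes uses non-convex shapes with interior walls. On a convex polyomino, every row and every column is a single interval, so any queen on a long row or column attacks essentially all of it, and there is no analogue of a wall to localize the effect of a gadget. The difficult work will consist precisely in finding a convex geometry that nevertheless isolates gadgets from one another.

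My primary strategy would be to arrange the gadgets along a long convex \emph{staircase}. Each variable gadget would be a small local ``bump'' on one of the stairs, whose two minimum-domination configurations correspond to the two truth states; the concavity at an inner corner of the staircase would play the role of the hole used in the polycube gadgets, providing cells with restricted attack ranges in the direction of the clause region. Connection gadgets would travel along the staircase's diagonal, and clause gadgets would sit at carefully chosen positions where the diagonals emitted by the participating variable bumps meet. I would then need to calibrate the step widths so that no two distinct gadgets share a row, column, or diagonal except where intended, while keeping the total dimensions polynomial in the size of the \psat{} instance; this calibration, together with the case analysis verifying that satisfiability matches the target domination number, is where I expect most of the effort to go.

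If the staircase layout proves too rigid to encode three-literal clauses---which I consider the likeliest point of failure---I would fall back on alternative sources: either a reduction from a more geometric NP-complete problem (for instance, a constrained art-gallery or dominating-set problem on a class of convex geometric objects) whose input already has a shape compatible with convex polyominoes, or a reduction from a planar SAT variant whose incidence graph admits a convex rectilinear embedding. Once the attacking queen case is settled, I would aim to derive the non-attacking case by an argument in the spirit of~\cite[Lem.~14]{alpert2021art}, which handles precisely this transition for rooks and whose adaptation to queens would itself be a natural auxiliary lemma to establish.
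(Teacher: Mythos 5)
The first thing to say is that the paper does not prove this statement: it is stated and left as Conjecture~\ref{quest:convex-min-dom-queens}, supported only by the observation (Figure~\ref{fig:pathological-polyo}) that naive diagonal-style strategies fail on convex polyominoes. So there is no ``paper proof'' to match your proposal against, and your proposal itself is a research plan rather than a proof --- you correctly identify the central obstacle and then defer it.

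The concrete gap is in the staircase idea, and it is worth naming why it fails as described. In a convex polyomino every row and every column is a \emph{single interval}, so a queen placed anywhere dominates the entire row-segment and column-segment of the polyomino through its cell; there is no wall, hole, or detour that can interrupt an orthogonal attack (a hole would immediately violate row- or column-convexity). Consequently the inner corner of a staircase does not ``play the role of the hole used in the polycube gadgets'': it can truncate a \emph{diagonal} ray (the only attack direction that can exit and terminate), but it cannot localize row or column attacks. A ``bump'' variable gadget therefore does not have two locally distinguishable minimum-domination states --- each candidate queen position dominates entire rows and columns that sweep across every other gadget sharing those lines, and since the total number of gadgets must be polynomial while the number of available rows and columns is only polynomial as well, you cannot in general arrange that distinct gadgets share no row or column. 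Any reduction would have to make these global interactions part of the encoding rather than something to be calibrated away, which is a genuinely different (and currently unknown) construction. Your fallback suggestions are reasonable directions, but neither they nor the adaptation of \cite[Lem.~14]{alpert2021art} to queens (which itself is nontrivial, since adding a queen to break independence changes diagonal coverage, not just row/column coverage) is carried out. As it stands, the statement remains open, and your proposal should be presented as a plan of attack, not as a proof.
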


On the other hand, as a consequence of~\cite[Thm 12]{alpert2021art}, which states that finding a maximum independent dominating set for rooks on polyominoes is in P, we get that the maximum independent domination problem for rooks on convex polyominoes is also in P.

\begin{corollary}\label{coro:Pconvex-max}
Maximum domination by non-attacking rooks on convex polyominoes is in P.
\end{corollary}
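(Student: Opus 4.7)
The proof is essentially immediate once one observes that convex polyominoes form a subclass of polyominoes, and that \cite[Thm~12]{alpert2021art} already supplies a polynomial-time algorithm for the maximal independent rook domination problem on the broader class of all polyominoes. My plan is therefore to write a one-line reduction: given a convex polyomino $P$, feed $P$ directly to the algorithm of \cite[Thm~12]{alpert2021art}. Since convexity is a structural restriction on the input and not an obstruction to the encoding used by the algorithm, correctness and polynomial running time are inherited without modification.

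I would also briefly indicate the alternative bipartite-matching viewpoint foreshadowed in the paragraph preceding the corollary, since it becomes particularly transparent in the convex case. In a convex polyomino, every row and every column consists of a single connected arc of unit squares, so a placement of non-attacking rooks corresponds to a matching between the rows and the columns (a rook in cell $(i,j)$ pairs row $i$ with column $j$), and a dominating placement corresponds to a matching whose saturated row-set and column-set together cover every cell. Maximum matching in bipartite graphs is solvable in polynomial time, for instance by Hopcroft--Karp, which gives a self-contained polynomial-time solution for convex polyominoes.

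The only subtlety worth highlighting is that for general polyominoes one must work with connected row- and column-segments rather than full rows and columns, and verify that a maximum matching in the resulting bipartite graph yields both independence and domination; in the convex case this issue collapses because there is exactly one segment per nonempty row and per nonempty column. Thus there is no substantive obstacle, and the corollary follows as a direct specialization of \cite[Thm~12]{alpert2021art}.
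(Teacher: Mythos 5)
Your proposal is correct and matches the paper's proof, which is exactly the one-line observation that convex polyominoes are a subclass of all polyominoes, so the polynomial-time algorithm of \cite[Thm~12]{alpert2021art} applies verbatim. The additional bipartite-matching discussion is sound (and consistent with how that cited theorem is proved, as the paper itself remarks just before the corollary), but it is not needed for the corollary.
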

\begin{proof}
Since it is in P for all polyominoes~\cite[Thm 12]{alpert2021art}, it is also in P for the convex ones.
\end{proof}

The maximum domination problem on convex polyominoes for non-attacking queens, on the other hand, has yet to be studied.  The gadgets used in the proofs on ~\cite{alpert2021art} are non-convex; thus, a direct proof cannot come from this method. However, the proof of the NP-completeness of the completion for the chessboard in~\cite{gent2017complexity} has gadgets that are non-convex only due to the constraint of completing the square. We suspect that an adaptation of their proof could lead to a proof of the result for convex polyominoes, though the reduction could be very subtle.
\begin{conjecture}\label{quest:convex-max-dom-queens}
The maximum domination problem for non-attacking queens on convex polyominoes is NP-complete.
\end{conjecture}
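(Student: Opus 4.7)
The plan is to establish NP membership via Lemma~\ref{lem:verifP} and then reduce from a known NP-complete problem---most naturally the $n$-queens completion problem on the square chessboard, which was shown NP-complete in \cite{gent2017complexity}. The paper hints that in the Gent--Jefferson--Nightingale reduction, non-convexity arises only because the gadgets are embedded inside a fixed $n \times n$ frame together with forced pre-placed queens; removing this extrinsic constraint should leave each gadget convex in isolation. The strategy is therefore to reinterpret each of their gadgets as a stand-alone convex polyomino, to use the boundary of a surrounding convex envelope to play the role of the pre-placed queens (since a queen's line of attack already terminates at a polyomino's edge), and to assemble everything into one large convex shape.

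The key structural idea is to arrange the variable and clause gadgets along a staircase whose outer hull is a convex polyomino. Each row and column of the final polyomino must be a single interval, so gadgets sharing a row or column are automatically connected by a filled band. This constraint is actually exploitable: the forced band can serve as a wire whose occupancy state is read by queens along rows and columns, analogously to how the connection gadgets of Section~\ref{sec:complexity} propagate signals through repeated \textbf{T}/\textbf{F} tiles. Diagonal interactions---the part hardest to control in a convex polyomino because rows and columns may have vastly different lengths---would be managed by placing sentinel tiles of a forced truth value so that each relevant diagonal is either blocked or terminates at the polyomino's boundary in a predictable way.

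The main obstacle is exactly this long-distance diagonal control. In a convex polyomino a single queen can dominate very long rays, so two gadgets that are logically unrelated may nevertheless see one another along a diagonal. One must either design the staircase with sufficient vertical offsets to break all such unwanted diagonals, or introduce convex spacer columns that force a unique dominating queen in each such column and absorb stray diagonal attacks. Bounding the total size by a polynomial in $n$ then reduces to showing that the number of such spacers is polynomial in $n$, which should follow from the staircase being at most $n$ steps tall and wide.

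Once convex gadgets and a convex assembly are in hand, correctness of the reduction is routine, following the template of Lemma~\ref{lem:equivalent-queens}: a satisfying completion of the $n$-queens partial placement corresponds to a non-attacking dominating set of the prescribed size on the constructed convex polyomino, and conversely. Combined with NP membership, this yields NP-completeness. If the direct translation of the Gent--Jefferson--Nightingale gadgets resists convexification, the fallback is a fresh reduction from \psat{} using convex two-dimensional analogues of the gadgets of Section~\ref{sec:complexity}, where the role of the third dimension used there to route signals is replaced by routing along diagonals of the staircase, with the convex envelope and sentinel tiles ensuring that interference between distant gadgets is fully controlled.
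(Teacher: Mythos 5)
This statement is a conjecture in the paper, not a theorem: the authors explicitly decline to prove it, noting only that the gadgets of Gent--Jefferson--Nightingale are ``non-convex only due to the constraint of completing the square'' and warning that the adaptation ``could be very subtle.'' Your proposal follows the same suggested route but does not close any of the gaps that keep the statement open. The central difficulty --- that in a convex polyomino every row and every column is a single interval, so you cannot carve holes or notches to isolate gadgets, and a single queen can see arbitrarily far along rows, columns, and diagonals --- is named in your third paragraph but not resolved. ``Sufficient vertical offsets,'' ``convex spacer columns,'' and ``sentinel tiles of a forced truth value'' are placeholders for constructions you have not given, and it is exactly the existence of such constructions that is in doubt. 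Note also that a diagonal in a convex polyomino need not terminate where you want it to: convexity constrains rows and columns, not diagonals, so a diagonal ray can pass through many steps of a staircase before exiting.

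Two further points would need real work even if the geometric layout were settled. First, the forced filled bands that convexity imposes between gadgets are not free: each new row and column must itself be dominated and may admit additional non-attacking queens, so the exact counting that drives every equivalence lemma in the paper (the analogue of Lemma~\ref{lem:equivalent-queens}, where the target value $m$ is computed gadget by gadget) would have to be redone from scratch, and ``correctness of the reduction is routine'' is not a substitute for that computation. Second, you assert that the Gent--Jefferson--Nightingale gadgets become convex once freed from the $n\times n$ frame, but you do not verify that they still enforce the same logical constraints when the pre-placed queens are replaced by the polyomino's boundary; the pre-placed queens in that reduction actively attack squares, whereas a boundary merely truncates lines of attack, and these are not interchangeable. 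As it stands your text is a research plan consistent with the paper's own speculation, not a proof, and the conjecture remains open.
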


One subfamily of convex polyominoes is the square polyominoes. There is no known polynomial time algorithm for finding the minimum number of queens needed to guard a $n\times n$ chessboard. We define the minimum chessboard domination queen completion problem as $(n, Q, l)$. Given a set $Q$ of $k$ queens placed on the $n \times n$ chessboard, is there a set $Q^\prime$ of size $k + l$ such that $Q \subset Q^\prime$ and that it dominates (guards) the chessboard?

\begin{conjecture}\label{conj:min-dom-queens}
The minimum chessboard domination queen completion problem is NP-complete.
\end{conjecture}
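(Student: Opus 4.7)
The plan is to argue NP-completeness along the two-step template used throughout the paper. Membership in NP is immediate: given a candidate completion $Q'$ of size $k+l$, Lemma~\ref{lem:verifP} verifies in polynomial time that $Q'$ dominates the full chessboard, so the entire task reduces to establishing NP-hardness by a polynomial-time reduction from a known NP-complete problem.

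My preferred starting point would be to reduce from the $n$-queens completion problem of~\cite{gent2017complexity}, or from the independent-domination completion problem of Corollary~\ref{coro:min-ind-dom-npcomp-queens} that already lives on a square chessboard. In either case, given an instance with partial placement $Q$ on the $n\times n$ board, I would embed it into a slightly larger $n'\times n'$ board, augment $Q$ by a carefully chosen set of pre-placed ``forcing'' queens, and set the completion budget $l$ to a threshold derived from the original instance. The forcing queens should be positioned so that the cheapest domination completion is achieved precisely by configurations respecting the combinatorial constraints of the original problem; this parallels how the gadgets of Section~\ref{sec:complexity} translate satisfiability into optimal placements. An alternative route, more faithful to the paper's \psat{} reductions, would construct literal, connection and clause gadgets directly on a large chessboard using pre-placed queens. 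The gadgets of~\cite{gent2017complexity} provide a template, but they must be \emph{inverted} since we now minimise rather than maximise the number of queens added, and the combinatorial bookkeeping needs to absorb the fact that pre-placed queens already contribute to domination for free.

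The main obstacle is structural and explains why the statement remains only a conjecture. In a polyomino, the queen's line of attack terminates at the boundary, which is exactly what allows the gadgets of Section~\ref{sec:complexity} to be geometrically isolated. On a full chessboard no such obstructions exist, so every pre-placed queen broadcasts its attack along eight full rays and pollutes every would-be gadget. Overcoming this will require either sizeable padding regions that dilute the long-range interference, coordinates chosen to avoid accidental row, column and diagonal alignments between gadgets, or a global counting argument that balances the ``free'' cells dominated by the pre-placed queens against the cells any completion must cover. Keeping the construction polynomial-size while ruling out all such accidental alignments is, in my view, where most of the difficulty lies, and a successful attack will likely have to combine local gadget design with a non-trivial global invariant that is robust against the unavoidable long-range queen attacks inherent to a square board.
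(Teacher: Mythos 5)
First, note that the statement you are trying to prove is stated in the paper only as a \emph{conjecture}: the paper offers no proof of it, and Table~\ref{table:mindom-queens} explicitly marks the attacking square-completion case as ``NP-complete?''. Your write-up, read honestly, is in the same position --- it is a research plan, not a proof. The NP-membership half via Lemma~\ref{lem:verifP} is fine, but the hardness half is never actually constructed: the ``forcing queens'', the padding regions, and the global counting invariant are all left hypothetical, and you yourself concede in your final paragraph that the central difficulty (controlling the eight full attack rays of every pre-placed queen on an unobstructed board) is unresolved. A proposal whose load-bearing step is ``a successful attack will likely have to combine\dots'' does not establish the statement.

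The concrete gap worth naming is in your first suggested route. The reduction that makes Corollary~\ref{coro:min-ind-dom-npcomp-queens} work --- mapping an $n$-queens completion instance $(n,Q)$ with $|Q|=k$ to $(n,Q,n-k)^I$ --- relies essentially on the independence constraint: with independence imposed, any completion of size $n$ is forced to be a full non-attacking $n$-queens configuration, so the budget $l=n-k$ certifies a solution of the original instance. Drop independence, as the attacking version of the problem does, and this collapses: the minimal dominating completion of $Q$ can use far fewer than $n-k$ queens (the domination number of the $n\times n$ board is roughly linear with a much smaller constant), placed with no relation to the $n$-queens structure, so meeting the budget tells you nothing about the source instance. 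Your second route (gadgets on a padded board) is the more plausible one, but without an explicit gadget whose optimal domination cost provably encodes a Boolean choice despite long-range interference, there is nothing to check. In short: you have correctly identified why this is hard, but you have not proved the conjecture, and neither does the paper.
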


 Analogously, we define the minimum independent chessboard domination queen completion problem $(n, Q, l)^I$. Given a set $Q$ of $k$ non-attacking queens placed on the $n \times n$ chessboard, is there a set $Q^\prime$ of size $k + l$ such that $Q \subset Q^\prime$, that it guards the chessboard, and no pair of queens in $Q^\prime$ attack each other? Notice that the difference with the well-known $n$-queens completion problem is that with $n$ queens the $n \times n$ chessboard is trivially dominated. As it has already been proven that this problem is NP-complete ~\cite[Thm~37]{gent2017complexity}, we can prove the following corollary.

\begin{corollary}\label{coro:min-ind-dom-npcomp-queens}
The minimum independent chessboard domination queen completion problem is NP-complete.
\end{corollary}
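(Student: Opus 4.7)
The plan is to reduce the $n$-queens completion problem, proven NP-complete in \cite[Thm~37]{gent2017complexity}, to the minimal independent chessboard domination queen completion problem. The key structural fact underlying the reduction is that any placement of $n$ mutually non-attacking queens on an $n \times n$ chessboard automatically dominates the board: non-attacking queens must occupy distinct rows (and distinct columns), so an $n$-queen non-attacking configuration hits every row and every column, hence every square is guarded.

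First I would verify membership in NP. Given a candidate $Q'$ of size $k+l$, one checks in polynomial time that $Q \subset Q'$, that no two queens in $Q'$ mutually attack, and that every square of the $n \times n$ board is guarded; this is exactly the content of Lemma~\ref{lem:verifP} specialised to a square polyomino, so the problem lies in NP.

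Next I would give the reduction. Let $(n,Q)$ be an instance of $n$-queens completion with $|Q|=k$, where $Q$ consists of mutually non-attacking queens (instances with attacking queens are trivially NO-instances and can be discarded in polynomial time). Map this to the instance $(n,Q,n-k)^I$ of the minimal independent chessboard domination queen completion problem. The reduction is clearly computable in polynomial time. For the forward implication, if $(n,Q)$ is a YES-instance, there is a set $Q'\supset Q$ of $n$ pairwise non-attacking queens, and by the observation above $Q'$ dominates the board, so $(n,Q,n-k)^I$ is a YES-instance. For the reverse implication, a solution $Q'$ to $(n,Q,n-k)^I$ is by definition a set of $n$ pairwise non-attacking queens extending $Q$; this is exactly a completion of $Q$ to an $n$-queens configuration, so $(n,Q)$ is a YES-instance.

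There is no substantive obstacle here beyond the observation that $n$ non-attacking queens on an $n\times n$ board necessarily form a dominating set, which is what lets the parameter $l=n-k$ capture completion exactly. Combining the polynomial-time reduction with the NP-completeness of $n$-queens completion and membership in NP yields NP-completeness of the minimal independent chessboard domination queen completion problem.
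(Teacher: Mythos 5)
Your proposal is correct and follows essentially the same route as the paper: both reduce $n$-queens completion to the instance $(n,Q,n-k)^I$, relying on the fact (stated in the paper just before the corollary) that $n$ non-attacking queens on an $n\times n$ board automatically dominate it, and both invoke Lemma~\ref{lem:verifP} for membership in NP. You spell out the two directions of the equivalence more explicitly than the paper does, but the argument is the same.
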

\begin{proof}

Let $n\in \mathbb{N}$ and $0\leq k\leq n$. An instance $(n,Q)$ of the $n$--queens completion problem can be reduced in polynomial time to an instance of the minimum independent chessboard domination queen completion problem $(n,Q,n-k)^I$. Since $n$--queens completion is NP-complete~\cite[Thm~37]{gent2017complexity}, it is NP-hard and the minimum independent domination queen completion problem is NP-hard. Finally, verifying a solution is done in polynomial time by Lemma~\ref{lem:verifP}, so the proof is completed.
\end{proof}

\section{An integer linear programming solver}\label{sec:modelcompu}
In this section, we present a general solver using integer linear programming (ILP) for the domination problem. This allows us to make use of efficient solvers such as~\cite{huangfu_parallelizing_2018}. We illustrate the performance of this general scheme by comparing it against heavily optimized solvers~\cite{bird_2017} created specifically for the queen domination problem on $n\times n$ chessboards.

Given a polycube, let $m$ be its number of tiles. We start by enumerating the tiles of the polycube in arbitrary order with an index $i \in \{1, \ldots, m\}$. We introduce $m$ binary variables $x_i$, with $x_i = 1$ encoding that a chess piece is placed on this tile and $x_i = 0$ encoding that no chess piece is placed on this tile.

Let us use $v$ to denote the attack direction of a selected chess piece $F$. For example, for a queen on the tile $i$ on a given polyomino there are four attacking directions: (1) the row it is on; (2) the column (see Figure~\ref{fig:col-attack-direction}); (3) the ascending diagonal; and (4) the descending diagonal. For a three-dimensional queen, there will be 13 attack directions.
\begin{figure}[h]
\centering
\begin{tikzpicture}[scale=1]
\foreach \x/\y in {-1/-1, 0/-1, 1/-1, 2/-1, 2/0, 0/1, -1/1, -2/1, 2/1, 0/2, 1/2, 2/2, -2/2, 0/3, -1/3, -2/3, 0/4, 1/4, -1/4, -2/4, 0/5, 1/5, 2/5} {
\path [draw=black, fill=tr-blue, fill opacity=0.6] (.5+\x-0.45, .5+\y-0.45)
-- ++(0,.9)
-- ++(.9,0)
-- ++(0,-.9)
--cycle;
}

\foreach \x/\y in {0/1,  0/2,  0/3,  0/4,  0/5}
{
\path [draw=black, fill=tr-pink] (.5+\x-0.45, .5+\y-0.45)
-- ++(0,.9)
-- ++(.9,0)
-- ++(0,-.9)
--cycle;
}
\draw (0.5, 3.5) node {$i$};
\end{tikzpicture}
\caption{A polyomino with the column attacking direction of tile $i$ highlighted in pink.}\label{fig:col-attack-direction}
\end{figure}

For each tile $i$ of the polycube and each attack direction $v$ of the piece $F$, we construct a set $I^F_{v, i}$ consisting of all the labels of the tiles that can be attacked from the tile $i$ using only the attack direction $v$; note that $i\in I^F_{v,i}$. Then, we add the following constraint to our integer linear programming problem for each tile $i$ and each attack direction $v$:
\begin{equation*}
    \sum_{j \in I^F_{v, i}} x_j \leq 1.
\end{equation*}

This ensures that at most one of the $x_j$  for $j\in I^F_{v,i}$ can be $1$, meaning that for all pairs of tiles attacking each other, there is only one chess piece placed on one of them. This construction leads in many cases to duplicate constraints, which we can simply remove at the end to optimize the program.

If we now maximize with the objective function $\sum_{i = 1}^m x_i$, which is the number of chess pieces, we have a translation of the maximum domination problem for non-attacking rooks or queens into an integer linear programming problem. This is one of the classical approaches to solve chessboard domination problems and is described in more detail in~\cite{foulds_application_1984}.

To translate the minimum independent domination problem, we need additional constraints to guarantee the domination of the polycube. For each tile of the polycube, we construct a set $A_i^F$ consisting of the tile $i$ and all tiles that can attack the tile $i$ using one movement of the piece $F$; note that $A_i^R\subset A^Q_i$. We add the following constraint to our integer linear programming problem for each tile $i$:
\begin{equation*}
    \sum_{j \in A_i^F} x_j \geq 1.
\end{equation*}
This ensures that each tile is guarded, since for each tile at least one of the $x_j$ in $A_i^F$ must be $1$, meaning that a chess piece is either placed on the tile or one can attack the tile.

Combining the two sets of constraints, we get the minimum independent domination integer linear programming problem for queens or rooks.
\begin{equation}\label{eq:ILP}
\begin{aligned}
(\mathrm{ILP}) \quad
& \text{minimize}   &&  \sum_{i = 1}^m x_i\\
& \text{subject to} && \mathbf{x} \in \{0,1\}^m, \\
& \quad\text{(Independence)} &&  \sum_{j \in I^F_{v, i}} x_j \leq 1,\quad \text {for all } I^F_{v,i}\\
&\quad \text{(Domination)} &&\sum_{j \in A_i^F} x_j \geq 1, \quad \text{for all } A_i^F.
\end{aligned}
\end{equation}

\begin{tool}\label{tool:gadget-check}
As a tool to help further research, we designed a small helper program based on the Julia library. It uses the exhaustive search provided by the proprietary solver Gurobi on the previously discussed ILP model to find all possible combinations of placing $d$ non-attacking queens on a polyomino~\cite{GadgetCheck}.
\end{tool}

The translation to an ILP process was used to calculate the previously unknown minimum numbers of non-attacking queens guarding the $n\times n$ chessboards for $n=26,\ldots,31$, which allowed us to extend the OEIS sequence \href{https://oeis.org/A075324}{A075324}~\cite{oeis}. To model the general minimum domination problem (sequence \href{https://oeis.org/A075458}{A075458}), one only needs to remove the independence constraints.

Our model can also be translated into a boolean satisfiability problem, since $\sum_i x_i \geq 1$ is equivalent to $\bigvee_i x_i$ and $\sum_i x_i \leq 1$ is equivalent to $\bigwedge_{i} \bigwedge_{j, i \neq q} (\overline{x}_i \lor \overline{x}_j$). This allowed us to solve the optimization problems with the state-of-the-art maxSAT solver Loandra~\cite{loandra}, but its runtime could not compete with the ILP solvers we tested. Interestingly, the better performance of ILP for these types of problems was commented upon by Knuth\footnote{``The author was able to reach $n = 47$ with dancing-links-based methods, in an
afternoon. But he knew that integer programming is significantly faster for `linear' applications such as the $n$ queens problem (see answer 36). So he enlisted the help of Matteo Fischetti; and sure enough, Matteo was able to extend the results dramatically''~\cite[Answer~236, p.477]{KnuthTAOCP4b}.}; see the analysis of Fischetti and Salvagnin on a generalization of the $n$-queens puzzle for an example~\cite{Fischetti18}.

Furthermore, we tested the performance of our solver on the problem of finding the maximum number of non-attacking queens that can be placed on a $d$--hypercube of size $n$.

\begin{table}[h]
    \centering{}%
    \begin{tabular}{l|cccccccccccccc}
    $8$ & $1$ & $1$ & $\color{le-orange} 52$\\
    $7$ & $1$ & $1$ & $\color{le-orange} 32$ & $\color{le-red} \mathbf{128}$\\
    $6$ & $1$ & $1$ & $\color{le-orange} 19$ & $\color{le-red} \textbf{64}$\\
    $5$ & $1$ & $1$ & $\color{le-orange} 11$ & $\color{le-red} \mathbf{32}$\\
    $4$ & $\color{g-green} 1$ & $\color{g-green}1$ & $\color{g-green} 6$ & $\color{g-green} 16$ & $\color{le-red} \mathbf{38}$ & $\color{le-red} \mathbf{80}$ & $\color{le-red} \mathbf{145}$\\
    $3$ & $\color{g-blue} 1$ & $\color{g-blue} 1$ & $\color{g-blue} 4$ & $\color{g-blue} 7$ & $\color{g-blue} 13$ & $\color{g-blue} 21$ & $\color{g-blue} 32$ & $\color{g-blue} 48$ & $\color{g-blue} 67$ & $\color{g-blue} 91$ & $\color{g-blue} 121$ & $\color{g-blue} 133$ & $\color{g-blue} 169$ \\
    $2$ & $1$ & $1$ & $\color{le-orange} 2$ & $4$ & $5$ & $6$ & $7$ & $8$ & $9$ & $10$ & $11$ & $12$ & $13$ \\
    \hline
     $\sfrac{d}{n}$ & $1$ & $2$ & $3$ & $4$ & $5$ & $6$ & $7$ & $8$ & $9$ & $10$ & $11$ & $12$ & $13$
    \end{tabular}
    \caption{Values of maximum queen domination problem on a $n^d$ hypercube. In blue (line $d=3$), on a cube; in green (line $d=4)$,  on a tesseract, and in orange (column $n=3$), on a $d$--dimensional hypercube of side length 3. The red numbers in bold are previously unknown values that we have calculated using \cite{GitHubHypercube}.}
    \label{tab:known-values}
\end{table}

Table~\ref{tab:known-values} presents  for reference many sequences of maximum independent queen domination problems on a $n^d$ hypercube. Some of the sequences are known and are indexed in OEIS~~\cite{oeis}: the queens on a cube of size $n$~ \href{https://oeis.org/A068940}{A068940}; queens on a tesseract \href{https://oeis.org/A068941}{A068941}; and queens on a $d$--dimensional hypercube of side length $3$ \href{https://oeis.org/A115992}{A115992}. We have extended the sequences of domination numbers for $d$--dimensional tesseracts of side length $4$ and of $4$--dimensional tesseracts of side length $n$  using our model---the program is available at~\cite{GitHubHypercube}.

From the data we found with our computations, we conjecture the following formula for the maximum number of non-attacking queens on the hypercube.
\begin{conjecture}\label{conj:queens-hyper}
The maximum non-attacking queen set problem on a d-dimensional hypercube of side length $4$ has the solution size $2^d$ for $d \geq 4$.
\end{conjecture}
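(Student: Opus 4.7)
The plan is to establish matching upper and lower bounds of $2^d$.

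For the upper bound, partition $\{0,1,2,3\}^d$ into $2^d$ \emph{parity classes} $C_c = \{x : x_i \equiv c_i \pmod{2}\ \text{for all}\ i\}$ indexed by $c \in \{0,1\}^d$. Any two distinct cells in the same class have coordinate differences lying in $\{-2,0,2\}$, so the nonzero entries all share the absolute value $2$ and the vector $(x-y)/2 \in \{-1,0,1\}^d$ defines a queen attack direction. Hence each class contains at most one queen, and the total is bounded by $2^d$.

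For the lower bound, call $S \subseteq \{0,1,2,3\}^a$ \emph{independent} if for any distinct $p,p' \in S$ the nonzero entries of $p-p'$ do not all share one absolute value. The key \emph{product property} is: if $S$ and $T$ are independent in dimensions $a$ and $b$ respectively, then $S \times T$ is independent in dimension $a+b$, with $|S \times T| = |S|\cdot|T|$. This is immediate: for distinct $(p,q),(p',q')$, at least one of the blocks $p-p'$ or $q-q'$ is nonzero and, by independence of the corresponding factor, that block alone already carries two nonzero entries with distinct absolute values.

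Using this, the proof would handle two base cases and induct. For $d=4$, take $S_4 := A \times A$ with $A = \{(0,1),(1,3),(2,0),(3,2)\}$ the standard $4$-queens placement; $A$ is independent in dimension $2$ since in any difference of two of its elements both coordinates are nonzero with distinct absolute values, and the product property then yields $|S_4|=16=2^4$. For $d=5$, one exhibits an explicit independent set of size $32$ (see the obstacle paragraph below). For $d \geq 6$, set $S_d := S_{d-2} \times A$, independent of size $2^{d-2}\cdot 4 = 2^d$. Since $S_d$ has exactly one representative in each parity class, it also dominates $\{0,1,2,3\}^d$: every cell lies in the same parity class as some queen and is therefore attacked by it, so the construction is valid for the independent domination problem.

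The main obstacle is the base case $d=5$, which the product property cannot bootstrap from smaller dimensions: the $4^3$ cube admits only $7 < 2^3$ non-attacking queens (as our computed table shows), so there is no $S_3$ of size $2^3$ to combine with $S_2$. A natural approach is to place the queen for parity class $c \in \{0,1\}^5$ at position $c + 2 s(c)$ for a selector $s \colon \{0,1\}^5 \to \{0,1\}^5$, and to require that for every $c \neq c'$ the differences $u_i + 2 v_i$ (with $u = c-c'$, $v = s(c)-s(c')$) supply at least two distinct nonzero absolute values. Natural symmetric selectors (linear, cyclic shifts, bit-flipping) all fail on either the all-equal pair $c=(0,0,0,0,0)$, $c'=(1,1,1,1,1)$ or on configurations where $u$ is concentrated in a single coordinate, so $s$ must deliberately break symmetry in the fifth coordinate. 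We expect to produce such an $s$ by ad hoc case analysis or via the ILP solver of Section~\ref{sec:modelcompu}; once such an $S_5$ is in hand, the inductive construction together with the parity upper bound closes the proof.
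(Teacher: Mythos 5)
This statement is left as a conjecture in the paper: the only support offered there is the computed data in Table~\ref{tab:known-values} (values $16, 32, 64, 128$ for $d=4,\dots,7$), so there is no proof of the paper's to compare against. Judged on its own merits, your plan is largely sound and, if completed, would upgrade the conjecture to a theorem. The upper bound is correct: two distinct cells of $\{0,1,2,3\}^d$ in the same parity class differ by $2w$ with $w\in\{-1,0,1\}^d\setminus\{0\}$, the intermediate cell $x+w=(x+y)/2$ lies in the hypercube, so they attack each other and each of the $2^d$ classes holds at most one queen. Your characterisation of non-attacking pairs (the nonzero entries of the difference do not all share one absolute value) matches the paper's definition of queen moves on a full hypercube, the product property is proved correctly (a single nonzero block already exhibits two nonzero coordinates of distinct absolute values), the base placement $A$ is easily checked, and the observation that a set meeting every parity class automatically dominates disposes of the domination requirement in the paper's formulation of the problem.

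The genuine gap is the one you flag yourself: the base case $d=5$. Your induction $S_d=S_{d-2}\times A$ needs \emph{two} starting points, and no product of independent sets in dimensions summing to $5$ can reach $32$ (the best is $4\cdot 7=28$ from $d=2$ and $d=3$, since $4^3$ admits only $7$ non-attacking queens). So an explicit, verified independent set of size $32$ in $\{0,1,2,3\}^5$ is indispensable, and the proposal does not produce one --- it only records that symmetric selectors $s\colon\{0,1\}^5\to\{0,1\}^5$ fail. Until such an $S_5$ is exhibited (e.g.\ extracted as a witness from the ILP solver of Section~\ref{sec:modelcompu} and checked against all $\binom{32}{2}$ pairwise conditions, which is a finite certificate), the lower bound is established only for even $d$, and the argument proves strictly less than the conjecture claims. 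With that single certificate supplied, the rest of your argument closes the conjecture for all $d\ge 4$.
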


To complement the exact algorithm, we now comment on the possibilities to approximate the domination problems we consider. First, the matrix of the rook maximum domination problem is unimodular, providing an alternate proof or \cite[Thm~12]{alpert2021art}.  Yet another proof is provided by noting that the rook graph is claw free, so finding an independent set is polynomial~\cite{minty1980maximal}.

For queens, however, or even rooks on a polycube, the graph is not claw free. However, we can still say something by extending the notion. A $m$-claw is the complete bipartite graph $K_{1,m}$ shown in Figure~\ref{fig:dclaw}. The normal claw is the bipartite graph $K_{1,3}$.
\begin{figure}[h!]
\[
\left.\begin{tikzpicture}[baseline= {(current bounding box.center)}]
    \fill (0,0) circle(2pt);
    \fill (2,2) circle(2pt);
    \fill (2,1) circle(2pt);
    \fill (2,0) circle(2pt);
    \draw (2,-1) node {$\vdots$};
    \fill (2,-2) circle(2pt);
    \draw (0,0) -- (2,2);
    \draw (0,0) -- (2,1);
    \draw (0,0) -- (2,0);
    \draw (0,0) -- (2,-2);
\end{tikzpicture}\right\} m
\]
\caption{A $m$--claw.}\label{fig:dclaw}
\end{figure}
We say that a connected graph is $m$--claw free if it does not contain the $m$--claw as an induced subgraph. We construct a chess graph on a polycube associated with a piece $p$ by putting a vertex for each unit polycube and an edge between two vertices if the piece $p$ can travel from one to another. Chess graphs are $m$--claw free for a certain $m$ related to the piece $p$.
\begin{proposition}\label{prop:Chess_Graph_claw_free}
Let $p$ be a chess piece with $m$ attack directions. Then, the associated chess graph is $(m+1)$--claw free.
\end{proposition}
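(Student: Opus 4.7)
The plan is to argue by a pigeonhole applied to the attack directions at a fixed vertex, together with a short colinearity lemma. Fix a vertex $v$ of the chess graph associated with the piece $p$. By construction, every neighbour of $v$ is a tile which the piece $p$ placed at $v$ attacks, and by the attacking definitions in Section~\ref{sec:prelims}, each such tile lies on one of the $m$ attack lines through $v$. If $v$ were the centre of an induced $K_{1,m+1}$, it would have $m+1$ pairwise non-adjacent neighbours, and pigeonhole would force two of them, say $u_1$ and $u_2$, to lie on a common attack line; the whole argument then reduces to showing that such $u_1$ and $u_2$ are necessarily adjacent in the chess graph.

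The key step is this lemma: if $v$ attacks both $u_1$ and $u_2$ along the same attack direction of $p$, then $u_1$ attacks $u_2$. By the definition of an attack, every tile on the straight segment from $v$ to $u_i$ belongs to the polycube, for $i=1,2$. If $u_1$ and $u_2$ lie on the same ray emanating from $v$, then (ordering so that $u_1$ is closer to $v$) the segment from $u_1$ to $u_2$ is a sub-segment of the segment from $v$ to $u_2$, hence lies entirely in the polycube. If instead $u_1$ and $u_2$ lie on opposite rays of the same attack line, the segment from $u_1$ to $u_2$ is the concatenation of the two segments from $u_1$ to $v$ and from $v$ to $u_2$, which is again contained in the polycube. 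In both cases the piece at $u_1$ attacks $u_2$, so the edge $u_1u_2$ is in the chess graph, contradicting the independence requirement of the induced $(m+1)$-claw.

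The main obstacle is purely a matter of bookkeeping conventions: one must interpret an \emph{attack direction} as an undirected line through $v$ rather than an oriented ray, so that the count $m$ matches the values stated in the paper ($m=d$ for rooks and $m=(3^d-1)/2$ for queens), while still accounting for attackers on either side of $v$ along the same line, which is exactly why the colinearity lemma needs both of its cases. Once this convention is fixed, the pigeonhole is immediate and no vertex of the chess graph can be the centre of an induced $K_{1,m+1}$, so the chess graph is $(m+1)$-claw free, completing the proof.
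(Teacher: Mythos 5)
Your proof is correct and follows essentially the same route as the paper's: pigeonhole on the $m$ attack directions forces two of the $m+1$ supposedly independent neighbours onto a common attack line, whence they are adjacent. Your colinearity lemma (handling both the same-ray and opposite-ray cases, and fixing the convention that a direction is an undirected line) just spells out the adjacency step that the paper asserts in one phrase, so it is a welcome but not essentially different elaboration.
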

\begin{proof}
    Suppose there is a $(m+1)$--claw as a subgraph. This means that a piece can reach $m+1$ independent tiles. As it has $m$ attack direction, by the pigeonhole principle, two of the tiles are in the same attack direction and there must be an edge between them.
\end{proof}
We remark briefly that the converse of Proposition~\ref{prop:Chess_Graph_claw_free} is not true as can be seen by the following example of a claw-free graph and a 4--claw-free graph that do not correspond to the rook and queen domination problems, respectively.
\begin{figure}[h!]
\[
\begin{tikzpicture}[scale=2]
\draw (0,0) -- (1,0) -- (1,1) -- (0,1)--cycle;
\fill (0,0) circle (2pt);
\fill (0,1) circle(2pt);
\fill (1,0) circle(2pt);
\fill (1,1) circle(2pt);
\end{tikzpicture}
\qquad
\begin{tikzpicture}[scale=2]
\draw (0,0) -- (1,0) -- (1,1) -- (0,1)--cycle;
\draw (0,0)--(1,1);
\fill (0,0) circle(2pt);
\fill (0,1) circle(2pt);
\fill (1,0) circle(2pt);
\fill (1,1) circle(2pt);
\end{tikzpicture}
\]
\caption{A 4-claw-free graph and a claw-free graph. There are no tetraminoes with a corresponding queen graph for the first graph, and no tetraminoes with a corresponding rook graph for the second graph.}
\end{figure}

Independent set is in P for claw-free graphs. For $m$-claw-free graphs with $m>3$, there is a constant factor approximation algorithm.
\begin{theorem}[{\cite[Thm~13]{Neu21}}]\label{thm:dClaw-free_poly_approx}
Let $G$ be a $m$--claw-free graph. Then, an independent set of $G$ can be  approximated in polynomial time with factor $m/2$.
\end{theorem}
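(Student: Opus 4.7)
The plan is to adapt the Hurkens--Schrijver local search argument for $k$-set packing to the $m$-claw-free setting. First, I would fix a small constant $t$ (depending on $m$) and describe the following polynomial-time procedure: start with an arbitrary maximal independent set $I$, and search for a \emph{$t$-improvement}, i.e., a pair $(A,B)$ with $A\subseteq I$, $|A|\le t$, $B\subseteq V(G)\setminus I$ an independent set with $|B|=|A|+1$, such that no vertex of $B$ is adjacent to any vertex of $I\setminus A$. If such a pair exists, replace $I$ with $(I\setminus A)\cup B$; otherwise stop. Because $|I|$ strictly increases at each improvement and $|I|\le n$, at most $n$ iterations occur, each of which can be tried by enumerating $O(n^{2t+1})$ candidates.

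The heart of the proof is bounding the ratio at a local optimum $I$ against any maximum independent set $I^*$. The single structural input from $m$-claw-freeness is the observation already noted in Proposition~\ref{prop:Chess_Graph_claw_free}: for every $v\in V(G)$, any independent set contained in $N(v)$ has size at most $m-1$. In particular, each $v\in I\setminus I^*$ has at most $m-1$ neighbours in $I^*\setminus I$, which by itself yields only the weak bound $|I^*\setminus I|\le(m-1)|I\setminus I^*|$.

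To sharpen this to the factor $m/2$, I would set up a charging scheme in which each $u\in I^*\setminus I$ distributes a unit of charge equally among its neighbours in $I\setminus I^*$ (note: it has at least one such neighbour since $I$ is maximal and $u\notin I$). The local optimality then forces structural constraints on how this charge accumulates: if a single $v\in I\setminus I^*$ received charge from too many $I^*$-neighbours that were charged \emph{only} to $v$, then swapping $v$ out in favour of those neighbours would be a $1$-improvement. More generally, a small tuple of $I$-vertices that jointly collect charge above the $m/2$ threshold would form the $A$-side of a $t$-improvement, contradicting local optimality. Summing the charges then gives $|I^*|\le (m/2)|I|$.

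The main obstacle is making this charging argument yield exactly the $m/2$ bound rather than the trivial $m-1$. This requires considering not just the neighbourhood of a single $I$-vertex but the joint neighbourhood of pairs (and, in the fully general version, $O(\log n)$-tuples) of vertices in $I$; the threshold $t$ must be chosen large enough that every ``bad'' configuration in $I$ certifies the existence of a valid $t$-improvement. Once the bookkeeping is carried out, one obtains a $t$ that is polynomial (indeed polylogarithmic) in the input size, so the algorithm runs in polynomial time and achieves approximation factor $m/2$, as claimed.
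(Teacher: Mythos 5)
The first thing to note is that the paper does not prove this statement at all: it is imported verbatim from Neuwohner's paper as \cite[Thm~13]{Neu21}, so there is no in-house argument to compare yours against, and what follows assesses your sketch on its own terms. Your overall strategy---local search followed by a charging argument against an optimum $I^*$---is the right one, but as written it has a concrete flaw and a missing core. The flaw: you allow the swap size $t$ to grow to $O(\log n)$ and still assert that enumerating $O(n^{2t+1})$ candidate swaps is polynomial; it is not, since $n^{\Theta(\log n)}$ is quasi-polynomial. In the Hurkens--Schrijver framework, constant $t$ yields $m/2+\epsilon$ and unbounded $t$ is what drives the ratio to the limit, so your sketch as stated delivers either a weaker ratio or a weaker running time. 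The missing core: the sentence ``once the bookkeeping is carried out'' is exactly where the theorem lives, and that bookkeeping is not supplied.

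Both problems disappear because the factor $m/2$ already follows from $t=1$, i.e.\ from swaps removing one vertex of $I$ and inserting two. Suppose $I$ is maximal and admits no such swap, and let $I^*$ be a maximum independent set. Every $u\in I^*\setminus I$ has $d(u)\ge 1$ neighbours in $I$ (maximality), all of which lie in $I\setminus I^*$ since $I^*$ is independent, and $\sum_{u} d(u)\le (m-1)\,|I\setminus I^*|$ by the $K_{1,m}$-free bound you quoted from Proposition~\ref{prop:Chess_Graph_claw_free}. Let $P$ be the set of $u\in I^*\setminus I$ with $d(u)=1$. If some $v\in I\setminus I^*$ were the unique $I$-neighbour of two vertices $u_1,u_2\in P$, then $(I\setminus\{v\})\cup\{u_1,u_2\}$ would be a larger independent set ($u_1$ and $u_2$ are non-adjacent as both lie in $I^*$), contradicting local optimality; hence $|P|\le |I\setminus I^*|$. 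Therefore
$|I^*\setminus I|\le |P|+\tfrac12\bigl(\textstyle\sum_u d(u)-|P|\bigr)\le \tfrac12|I\setminus I^*|+\tfrac{m-1}{2}|I\setminus I^*|=\tfrac{m}{2}|I\setminus I^*|$,
and adding $|I^*\cap I|\le \tfrac m2|I\cap I^*|$ (using $m\ge 2$) gives $|I^*|\le \tfrac m2|I|$. Each accepted swap increases $|I|$ by one, so $O(n)$ rounds of an $O(n^3)$ search suffice. I would recommend writing the argument this way rather than invoking the full Hurkens--Schrijver machinery, which is only needed to push the ratio \emph{below} $m/2$.
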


The class of $m$--claw-free graphs has an interesting relation between the minimum independent domination and the maximum independent domination problems.
\begin{proposition}\label{prop:Min_and_Max_related}
Let $G$ be a $m$--claw-free graph. Then, if we denote by $\min(G)$ the minimum size of an independent dominating set on $G$ and by $\max(G)$ the maximum size of an independent dominating on $G$, we have
 \begin{equation}
     (m-1)\operatorname{min}(G) \geq \operatorname{max}(G).
 \end{equation}
\end{proposition}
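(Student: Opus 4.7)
The plan is to compare a minimum independent dominating set $D$ with a maximum independent dominating set $I$ by localizing the comparison at each vertex of $D$, using the fact that $D$ dominates $V(G)$ to cover $I$.

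First I would fix $D$ of size $\min(G)$ and $I$ of size $\max(G)$. Since $D$ is a dominating set, every vertex of $I$ lies in the closed neighbourhood $N[d] = \{d\}\cup N(d)$ of some $d\in D$, so
\[
|I| \;\leq\; \sum_{d\in D} |I\cap N[d]|.
\]
The goal is then to show $|I \cap N[d]| \leq m-1$ for every $d\in D$, which would immediately give $\max(G) \leq (m-1)\min(G)$.

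Next I would analyse $|I \cap N[d]|$ by cases. If $d\in I$, then since $I$ is independent, no neighbour of $d$ lies in $I$, so $|I\cap N[d]|=1 \leq m-1$. If $d \notin I$, let $S = I \cap N(d)$; the vertices in $S$ are pairwise non-adjacent (as $I$ is independent) and each is adjacent to $d$, so the induced subgraph on $\{d\}\cup S$ is exactly the star $K_{1,|S|}$. Since $G$ is $m$-claw-free, it contains no induced $K_{1,m}$, and therefore $|S|\leq m-1$, giving $|I\cap N[d]| = |S| \leq m-1$.

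Combining the two cases and summing over $d \in D$ yields $|I| \leq (m-1)|D|$, which is exactly the desired inequality $(m-1)\min(G) \geq \max(G)$. I do not anticipate a serious obstacle here: the only subtlety is verifying that the star obtained in the second case is genuinely an \emph{induced} $K_{1,m}$ (so that the $m$-claw-freeness of $G$ applies), which follows immediately from the independence of $I$. The hypothesis $m\geq 2$ is implicit since $1$-claw-free graphs are edgeless and the statement is vacuous there.
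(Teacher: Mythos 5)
Your proof is correct, and it follows the same overall strategy as the paper: fix a minimum independent dominating set $D$, use domination to write the maximum independent set $I$ as a union of its intersections with the closed neighbourhoods $N[d]$, $d\in D$, and bound each intersection by $m-1$ using $m$-claw-freeness. The difference is in how that local bound is obtained. The paper asserts that the neighbourhood of each vertex of $D$ ``divides into at most $m-1$ fully connected sets,'' i.e.\ it argues via a clique cover of size $m-1$ of each closed neighbourhood, and then counts at most one vertex of $I$ per clique. Your argument instead bounds the independence number of each $N[d]$ directly: an independent set $S\subseteq N(d)$ of size $m$ together with $d$ would induce a $K_{1,m}$. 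This is a genuine improvement in rigour, not just a stylistic variant: $m$-claw-freeness bounds the independence number of $N(d)$ by $m-1$, but it does \emph{not} in general guarantee a partition of $N(d)$ into $m-1$ cliques (for instance, the wheel on six vertices is claw-free, yet the hub's neighbourhood is a $5$-cycle, which needs three cliques to cover, not two). The paper's intermediate claim is therefore stronger than what claw-freeness provides, while your direct independence bound is exactly what is needed and is what makes the inequality go through. Your closing remarks on the induced-star subtlety and on small $m$ are also apt; the only nitpick is that for $m=1$ the statement is false rather than vacuous on a nonempty edgeless graph, but that case is clearly outside the paper's intended range.
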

\begin{proof}
    Let $M$ be a minimum independent dominating set on $G$. Then, each vertex in $M$ divides into at most $m-1$ fully connected sets. There are thus at most $(m-1)\operatorname{min}(G)$ disjoint induced fully connected subgraphs covering the whole graph.
\end{proof}
As a corollary, we thus have a constant-factor approximation algorithm for the maximum independent domination problem and an upper bound for the minimum independent domination problem.
\begin{corollary}\label{coro:min_constant_factor}
The maximum independent domination on a $m$--claw-free graph can be approximated polynomially by a factor of $m/2$. Minimum independent domination can be bounded in polynomial time by a constant factor $m(m-1)/2$. In particular, this applies to chess graphs.
\end{corollary}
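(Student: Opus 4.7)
The plan is to deduce both claims as immediate consequences of Theorem~\ref{thm:dClaw-free_poly_approx} and Proposition~\ref{prop:Min_and_Max_related}, with the ``in particular'' remark about chess graphs folded in via Proposition~\ref{prop:Chess_Graph_claw_free}.

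For the maximal independent domination statement, I would first apply the polynomial-time $m/2$--approximation of Theorem~\ref{thm:dClaw-free_poly_approx} to $G$, producing an independent set $I$ of size at least $2\max(G)/m$; note that $\max(G)$ coincides with the usual independence number because every inclusion-maximum independent set is automatically maximal and thus dominating. To make $I$ honestly dominating I would then extend it greedily to an inclusion-maximal independent set $I'$, which only enlarges it, and the resulting $I'$ is an independent dominating set certifying the claimed $m/2$ factor.

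For the minimal independent domination statement, the same $I'$ serves as the polynomial-time output. Chaining the minimality inequality $\min(G)\le |I'|$ with $|I'|\le \max(G)$ and Proposition~\ref{prop:Min_and_Max_related} yields
\[
\min(G)\le |I'|\le \max(G)\le (m-1)\,\min(G),
\]
so $I'$ is in fact within a factor $m-1$ of the optimum, which is comfortably below the advertised $m(m-1)/2$ for every $m\ge 2$. The final ``in particular'' sentence then follows by substituting the claw-freeness parameter supplied by Proposition~\ref{prop:Chess_Graph_claw_free}: for a chess piece with $k$ attack directions on a polycube, the associated chess graph is $(k+1)$--claw-free, so both bounds apply with $m=k+1$.

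I do not foresee a genuine obstacle, since the heavy lifting is already packaged in Theorem~\ref{thm:dClaw-free_poly_approx} and Proposition~\ref{prop:Min_and_Max_related}. The only routine check I would be careful about is that the greedy maximal extension step runs in polynomial time on the chess graph; this is immediate from the fact that the tiles attacked by a single rook or queen can be enumerated in polynomial time, as already noted in Lemma~\ref{lem:verifP}.
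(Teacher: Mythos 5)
Your proposal is correct and rests on the same two ingredients as the paper's proof (Theorem~\ref{thm:dClaw-free_poly_approx} and Proposition~\ref{prop:Min_and_Max_related}, with Proposition~\ref{prop:Chess_Graph_claw_free} supplying the ``in particular''), but your analysis of the second claim is genuinely sharper and worth contrasting. The paper obtains the factor $m(m-1)/2$ by composing the two ratios: the $m/2$ approximation guarantee for $\max(G)$ from Theorem~\ref{thm:dClaw-free_poly_approx} is multiplied by the gap $\max(G)\le (m-1)\min(G)$ from Proposition~\ref{prop:Min_and_Max_related}, yielding a two-sided estimate of $\min(G)$ whose endpoints differ by $m(m-1)/2$. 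You instead observe that the object you actually output --- an inclusion-maximal independent set $I'$ --- is itself an independent dominating set, so it is sandwiched as $\min(G)\le |I'|\le \max(G)\le (m-1)\min(G)$; only the min--max gap enters, and you get factor $m-1$, which beats $m(m-1)/2$ for all $m\ge 3$. Notably, for this second claim your argument does not use Theorem~\ref{thm:dClaw-free_poly_approx} at all: any greedily computed maximal independent set already achieves the $(m-1)$ bound. Your handling of the first claim also makes explicit a detail the paper leaves implicit, namely that the approximation algorithm's output need not be dominating and must be extended greedily to a maximal independent set before it certifies the $m/2$ factor for \emph{maximal independent domination} (using that $\max(G)$ equals the independence number). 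Both routes are valid; yours delivers a strictly stronger conclusion for the minimization statement at no extra cost.
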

\begin{proof}
    This is a consequence of Theorem~\ref{thm:dClaw-free_poly_approx} and Proposition~\ref{prop:Min_and_Max_related}. The last statement follows directly from Proposition~\ref{prop:Chess_Graph_claw_free}.
\end{proof}

This last corollary gives the constant factor approximation and bound for any given dimension. However, the constant factor grows with the dimension. It is linear for the rooks, as a $d$-dimensional rook has $d$ attack directions, and it is exponential for the queens, as the number of attack directions of a $d$-dimensional queen is $(3^d-1)/2$. To see this last claim, place a queen at the middle of a $d$-hypercube of side 3: it can go in one move in all the $3^d-1$ remaining unit cubes and each cube as a cube opposite in a straight line; see Figure~\ref{fig:mov-pieces} for the example in dimension 3.

To conclude, we now describe the video game we created with the Godot game engine~\cite{godot} on minimum rook and queen domination on polyominoes. Readers are invited to try the game online  at~\url{https://www.erikaroldan.net/queensrooksdomination}.

The game challenges the player to dominate a random polyomino either with rooks or queens. When the player submits their solution, the smallest number necessary is then given to them. The polyominoes chosen for the game are all 50-tile polyominoes generated via a shuffling algorithm with a percolation parameter empirically chosen to offer interesting challenges to the player. The optimal solutions for the generated polyominoes were found using the solver described above, which can in fact easily manage polyominoes with thousands of tiles.

We plan to further develop the game to include minimum independent domination and maximum independent domination, as well as create a two-player game and port it to different platforms.

\section*{Acknowledgements}
{\small Significant progress was made on this project while ALR and MM visited ERR at the Max Planck Institute for Mathematics in the Sciences. The three authors also worked together on this project at the conference \textit{Let’s talk about outreach!} that took part at SwissMAP Research Station, Les Diablerets, Switzerland.
 We would like to thank Jonas Handwerker for comments and suggestions. The third author thanks the Laboratory for Topology and Neuroscience at the EPFL for hosting them during part of the project. Finally, the authors would like to thank the anonymous referee for their careful reading and helpful suggestions.}

\section*{Declarations}
\subsection*{Ethical Approval} Not applicable.
\subsection*{Competing interests} The authors declare no conflict of interest related to this work.
\subsection*{Authors' contribution} ALR, MM and ERR took an active role in all parts of the research. ALR, MM and ERR wrote and reviewed the whole manuscript.
\subsection*{Funding}  Open Access funding enabled and organized by Projekt DEAL. ALR was supported in part by the EOS Research Project [grant number 30889451] and by a scholarship from the Fonds de recherche du Qu\'ebec -- Nature et technologies [grant number 270527]. This project received funding from the European Union's Horizon 2020 research and innovation program under the Marie Sk\l odowska-Curie [grant agreement No.~754462]. We acknowledge the support given under Federal Ministry of Education and Research of Germany and by Sächsische Staatsministerium für Wissenschaft, Kultur und Tourismus in the programme Center of Excellence for AI-research „Center for Scalable Data Analytics and Artificial Intelligence Dresden/Leipzig“ (project identification number: ScaDS.AI)
\subsection*{Availability of data and software}
The software developed and implemented in the course of this research is publicly available on GitHub~\cite{QandRsoftware},  the polyomino verification tool on~\cite{GadgetCheck}, and the program for the computations done in the last section is publicly available on GitHub~\cite{GitHubHypercube}.

\Urlmuskip=0mu plus 1mu\relax 

\printbibliography
\end{document}